\numberwithin{equation}{section}
\tikzset{curve/.style={settings={#1},to path={(\tikztostart)
    .. controls ($(\tikztostart)!\pv{pos}!(\tikztotarget)!\pv{height}!270:(\tikztotarget)$)
    and ($(\tikztostart)!1-\pv{pos}!(\tikztotarget)!\pv{height}!270:(\tikztotarget)$)
    .. (\tikztotarget)\tikztonodes}},
    settings/.code={\tikzset{quiver/.cd,#1}
        \def\pv##1{\pgfkeysvalueof{/tikz/quiver/##1}}},
    quiver/.cd,pos/.initial=0.35,height/.initial=0}
\tikzset{tail reversed/.code={\pgfsetarrowsstart{tikzcd to}}}
\tikzset{2tail/.code={\pgfsetarrowsstart{Implies[reversed]}}}
\tikzset{2tail reversed/.code={\pgfsetarrowsstart{Implies}}}
\tikzset{no body/.style={/tikz/dash pattern=on 0 off 1mm}}
\newtheorem{theorem}{Theorem}[section]
\newtheorem{corollary}[theorem]{Corollary}
\newtheorem{lemma}[theorem]{Lemma}
\newtheorem{proposition}[theorem]{Proposition}
\theoremstyle{definition}
\newtheorem{definition}[theorem]{Definition}
\newtheorem{example}[theorem]{Example}
\theoremstyle{remark}
\newtheorem{remark}[theorem]{Remark}
\newtheorem*{problem}{\textbf{Main Problem}}
\newcommand*{\im}{\operatorname{im}}
\newcommand*{\dom}{\operatorname{dom}}
\renewcommand*{\hom}{\operatorname{Hom}}
\title{On the homology of partial group representations}
\keywords{partial group (co)homology, group (co)homology, partial group representation.}
\subjclass[2020]{Primary 20J05, 20J06, Secondary 16E30.}
\author{Emmanuel Jerez}
\address[Emmanuel Jerez]{Departamento de Matem{\'a}tica, Universidade de S\~ao Paulo, Brazil}
\email{ars.ejerez@icloud.com}
\begin{document}

\begin{abstract}
    We study how the partial group (co)homology of a group \( G \) with coefficient in a partial representation \(M\) can be described using the usual group (co)homology.
    To address this, we introduce the concept of the \textit{universal globalization} \( \Lambda(M) \) of a partial group representation \(M\) of \(G\).
    Our main result shows that the partial group homology \( H^{\text{par}}_{\bullet}(G, M) \) is naturally isomorphic to the classical group homology \( H_{\bullet}(G, \Lambda(M)) \).
    We extend this result to the cohomological framework, obtaining a spectral sequence involving the classical group cohomology that converges to the partial group cohomology. Notably, when \( G \) is countable, the spectral sequence collapses, resulting in a natural isomorphism \( H^{\bullet}_{\text{par}}(G, M) \cong H^{\bullet}(G, \operatorname{Hom}_{K_{\text{par}} G}(\Lambda(K_{par}G), M))\),
    where \(K_{par}G\) stands for the partial group algebra of \(G\).
\end{abstract}

\maketitle

\tableofcontents

\section*{Introduction}

Partial group actions, introduced by Exel in \cite{exe}, generalize classical group actions by encoding symmetries defined only on subsets of a space. These arise naturally from the following question: What happens to a family of symmetries when examining their local behavior? For instance, restricting a global action of a group \(G\) on a space \(X\) to a subspace \(Y\) typically results not in a group action, but in a partial one.

Beyond their significant applications in the context of \(C^*\)-algebras, partial group actions have stimulated rich investigations in purely algebraic settings (see, for example, the surveys \cite{EBSurvery}, \cite{D3}). As the theory of partial group actions has developed extensively, a fundamental question emerges: How can one define a proper (co)homology theory for partial group actions?

Initial approaches to partial (co)homology include a semigroup-based cohomology tied to the partial Schur multiplier \cite{DKh, DoNoPi} and a cohomology theory constructed from partial representations \cite{AlAlRePartialCohomology}. The latter is deeply connected to Hochschild (co)homology computations for partial crossed products and partial twisted group algebras \cite{AlAlRePartialCohomology}, \cite{Article_Dokuchaev-Jerez_2023_TTPGAAOPCP}, and it is this (co)homological framework that we study in the present work.

Developing methods to compute the partial group (co)homology of a group \(G\) is crucial not only for understanding the structure of partial group actions and partial representations but also for calculating the Hochschild (co)homology of partial twisted group algebras. In this context, we address the following central problem:
\begin{problem}
    Can the classical group (co)homology of a group \(G\) be used to compute the partial group (co)homology of \(G\) with coefficients in a partial representation? If so, what is the relationship between these two homological theories?
\end{problem}
This paper resolves this problem by establishing a natural isomorphism between classical group (co)homology and partial group (co)homology.
Our results provide a bridge between classical and partial homological invariants, enabling the use of well-established tools from group cohomology to analyze partial structures.
Conversely, partial group (co)homology can be applied to compute the classical group (co)homology of \(G\).
This approach is computationally advantageous because the \textit{canonical} complex used to compute partial group (co)homology is \textit{smaller} than the canonical complex used to compute classical group (co)homology.

In addition to the above, the results obtained in this work include group algebras in the class of Hopf algebras for which \cite[Proposition 5.2]{ABV2} is applicable.  
This is particularly important, as the authors noted in \cite[Remark 5.3]{ABV2} the difficulty of providing examples or counterexamples of Hopf algebras for which the mentioned proposition is applicable.  
We believe that the techniques developed in this work can be extended to the general case of partial representations of Hopf algebras and used to construct a large class of Hopf algebras for which the mentioned proposition is applicable.  
Another significant consequence of the results obtained in this work is a positive answer to an open conjecture \cite[Conjecture D]{MMAMDDHKPartialHomology} in the case where \(G\) is countable.
This conjecture involves a new homological invariant of \(G\), referred to as the \textit{partial projective dimension} of \(G\)  

A central result of this work is the construction of a universal global action for any partial action of \( G \) on \( M \).
This globalization, defined as \(\Lambda(M)\), enables us to transition between partial and global representation of \(G\) within the homological framework.

The first section is dedicated to recalling the basic theory and definitions necessary for the development of this work.  
For the basic theory of partial group actions, we follow Exel's book \cite{E6}. For homological algebra we refer to the books \cite{loday2013cyclic}, \cite{RotmanAnInToHoAl}, and \cite{weibel_1994}.

The second section is dedicated to partial group actions on modules.
We explore the relationship between partial group actions on modules and partial representations of groups, and then we introduce the construction of the \textit{partial} tensor product of two partial group actions on modules. This partial tensor product will play a central role in the results obtained in this work. We prove that this partial tensor product satisfies the usual properties of tensor products on modules, such as certain universal property and \textit{associativity} under certain conditions.
An important consequence of the existence of the partial tensor product is the existence of a universal global action for a partial group action on a $K$-module.
This universal global action may not be a globalization in the usual sense.
In contrast to set-theoretical partial group actions, partial group actions on modules may not be globalizable, and if globalization exists, it may not be unique.
We give necessary and sufficient conditions for a partial group action on modules to be globalizable. Consequently, partial group actions arising from partial representations are always globalizable. We obtain, in this way, a functor \(\Lambda\) from partial representations to global representations that sends a partial representation to its universal globalization. We call this functor the globalization functor.

The third section is the main part of this work.
In this section we prove that the partial group homology \( H^{\text{par}}_{\bullet}(G, M) \) is naturally isomorphic to the classical group homology \( H_{\bullet}(G, \Lambda(M)) \) of its universal globalization. This result extends naturally into the cohomological context through a spectral sequence converging to the partial group cohomology \( H^{\bullet}_{\text{par}}(G, M) \). In particular, for countable groups or specific cases where this spectral sequence collapses, we obtain an  isomorphism \( H^{\bullet}_{\text{par}}(G, M) \cong H^{\bullet}(G, \operatorname{Hom}_{K_{\text{par}} G}(\Lambda(K_{par}G), M)) \).

\section{Preliminaries}

Let $K$ be a commutative unital ring. 
Throughout this work, we fix $K$ as our ground ring. Thus, we refer to $K$-modules simply as modules, and we denote the tensor product of two $K$-modules $X \otimes_{K} Y$ simply as $X \otimes Y$. 
If $\mathcal{A}$ is a $K$-algebra and $X$ is an $\mathcal{A}$-bimodule, the $\mathcal{A}$-tensor product of $n$ copies of $X$ will be denoted by $X^{\otimes_{\mathcal{A}}n}$. When $\mathcal{A} = K$, it will be simply denoted as $X^{\otimes n}$.
The capital letter $G$ will be used to denote a group, and the unit element of an algebraic structure will be denoted simply by $1$ if there is no ambiguity. We use the notation $\mathbb{N}$ to denote the set of natural numbers, $\{0, 1, 2, \ldots\}$. This work is dedicated to the study of partial actions and partial representations of groups; hence, we will use the terms ``\textit{partial action}'' and ``\textit{partial representation}'' to refer to this specific type of partial action and partial representation, respectively.

\subsection{Partial actions and partial representations}

Here we introduce the basic theory of partial group actions and partial representations of groups used in this work, following the book \cite{exe}.

\begin{definition} \label{d: partial representation}
    A \textbf{partial representation} of $G$ in a unital associtive $K$-algebra $A$ is a map $ \pi : G \rightarrow A$ such that, for any $s,t \in G$, we have:
    \begin{enumerate}[(a)]
    \itemsep0em 
        \item $ \pi(s)  \pi(t) \pi(t^{-1} )= \pi(st) \pi(t^{-1}),$ 
        \item $ \pi(s^{-1})  \pi(s) \pi(t)= \pi(s^{-1}) \pi(st),$
        \item $ \pi(1_G)=1_A$.
    \end{enumerate}
    If \(M\) is a \(K\)-module and \(A = \operatorname{End}_{K}(M)\) we say that \(\pi\) is a partial representation of \(G\) on \(M\).
\end{definition}

\begin{definition}
    Let $\pi: G \rightarrow \operatorname{End}_K(M)$ and $\pi': G \rightarrow \operatorname{End}_K(W)$ be two partial representations of $G$. A \textbf{morphism of partial representations} \index{Morphism of partial representations} is a morphism of $K$-modules $f: M \rightarrow W$, such that $f \circ \pi (g) = \pi'(g) \circ f$ $\forall g \in G$.
\end{definition}

The category of partial representations of $G$, denoted $\textbf{ParRep}_G$, is the category whose objects are pairs $(M, \pi)$, where $M$ is a $K$-module and $\pi : G \rightarrow \operatorname{End}_K(M)$ is a partial representation of $G$ on $M$, and whose morphisms are morphisms of partial representations. Denote by $\mathcal{S}(G)$ the Exel's semigroup of $G$, i.e., it is the inverse monoid defined by the generators $[t]$, $t \in G$, and relations: 
    \begin{enumerate}[(i)]
        \item $ [1_G] = 1 $;
        \item $[s^{-1}][s][t] = [s^{-1}][st]$;
        \item $[s][t][t^{-1}] = [st][t^{-1}]$;
    \end{enumerate}
for any $t,s \in G$. We denote the inverse (in the inverse semigroup sense) of an element $w \in \mathcal{S}(G)$ as $w^{*}$. The set of idempotent elements of $\mathcal{S}(G)$ will be denoted by $E(\mathcal{S}(G))$. For more details, refer to \cite{E6}.
\begin{definition}
    We define the \textbf{partial group algebra} $K_{par}G$ as the semigroup $K$-algebra generated by $\mathcal{S}(G)$.  We set $\mathcal{B}$ as the semigroup $K$-algebra generated by $E(\mathcal{S}(G))$.
\end{definition}

Here, we recall some well-known computation rules for \(K_{par}G\) that will be heavily used throughout this work. We will omit the proofs since they can be easily found in the literature (for example, refer to \cite{E6}).

\begin{proposition} \label{p: computations rules}
    Define $e_g := [g][g^{-1}]$, then 
    \begin{enumerate}[(i)]
        \item $[g]e_h = e_{gh}[g]$ for all $g, h \in G$,
        \item $e_g e_h = e_h e_g$ for all $g, h \in G$
        \item $e_g e_g = e_g$ for all $g \in G$
        \item the set $\{ e_g : g \in G \}$ generates the semigroup $E(\mathcal{S}(G))$
    \end{enumerate}
\end{proposition}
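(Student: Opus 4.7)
The plan is to derive each identity directly from the defining relations (i)--(iii) of the Exel semigroup $\mathcal{S}(G)$, and to bootstrap the later parts from the earlier ones.

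For part (i), I would expand both sides using $e_k = [k][k^{-1}]$ and reduce them to the common expression $[gh][h^{-1}]$. The left-hand side $[g]e_h = [g][h][h^{-1}]$ collapses to $[gh][h^{-1}]$ by relation (iii) of $\mathcal{S}(G)$ applied with $s=g$ and $t=h$. The right-hand side $e_{gh}[g] = [gh][h^{-1}g^{-1}][g]$ collapses to the same expression by relation (ii) applied with $s = h^{-1}g^{-1}$ and $t = g$, using $h^{-1}g^{-1}\cdot g = h^{-1}$.

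For part (iii), applying relation (iii) of $\mathcal{S}(G)$ with $s = g$ and $t = g^{-1}$ gives $[g][g^{-1}][g] = [1_G][g] = [g]$, and right-multiplying by $[g^{-1}]$ yields $e_g e_g = [g][g^{-1}] = e_g$.

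For part (ii), I would bootstrap from part (i): write $e_g e_h = [g]\bigl([g^{-1}]e_h\bigr)$, apply (i) to get $[g^{-1}]e_h = e_{g^{-1}h}[g^{-1}]$, and then apply (i) a second time to get $[g]e_{g^{-1}h} = e_h[g]$. Combining these gives $e_g e_h = e_h[g][g^{-1}] = e_h e_g$. Thus (i) essentially forces commutativity of the $e_g$.

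Part (iv) is the part I expect to be the main obstacle, as it cannot be obtained by symbolic manipulation alone. The plan is to invoke Exel's normal form theorem for $\mathcal{S}(G)$: every element $w \in \mathcal{S}(G)$ can be written as $w = e_{g_1} \cdots e_{g_n}[h]$ for some $g_1, \ldots, g_n, h \in G$. Such a $w$ is idempotent precisely when $h = 1_G$, since otherwise $w^2$ introduces a factor $[h]$ that cannot be cancelled in the normal form. Hence $E(\mathcal{S}(G))$ consists exactly of products of the $e_g$, proving (iv). Establishing the normal form itself (uniqueness up to commutativity of the $e_g$) is the genuine technical content, and in practice I would cite Exel's book rather than redo it.
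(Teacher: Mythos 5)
Your proposal is correct, and it reconstructs exactly the standard argument that the paper itself does not spell out (it omits the proof and defers to Exel's book \cite{E6}): parts (i) and (iii) follow from the defining relations of $\mathcal{S}(G)$ precisely as you compute, (ii) does follow by applying (i) twice, and (iv) follows from the normal form $w = e_{g_1}\cdots e_{g_n}[h]$ of Proposition~\ref{p: basic decomposition of elements of SG}. The only point worth tightening is your justification of ``idempotent iff $h=1_G$'' in (iv): rather than saying the extra $[h]$ ``cannot be cancelled,'' note that pushing $[h]$ past the idempotents via (i) and using $[h][h]=[h^2]e_{h^{-1}}$ exhibits the normal form of $w^2$ with group component $h^2$, so the uniqueness of that component forces $h^2=h$, i.e.\ $h=1_G$.
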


Recall that the subalgebra $\mathcal{B} \subseteq K_{par}G$ is a left $K_{par}G$-module with the action
\[
    [g] \triangleright u = [g] u [g^{-1}]
\]
and a right $K_{par}G$-module with the action
\[
    u \triangleleft [g] =  [g^{-1}] u [g].
\]
One verifies that
\begin{enumerate}[(i)]
    \item $w \triangleright u = wu$ for all $w, u \in \mathcal{B}$,
    \item $u \triangleleft w = wu$ for all $w, u \in \mathcal{B}$,
    \item $[g] \triangleright 1 = e_{g}$ for all $g \in G$,
    \item $1 \triangleleft [g] = e_{g^{-1}}$ for all $g \in G$.
\end{enumerate}

Hence, we obtain a well-defined morphism of right $K_{par}G$-modules
\begin{align} \label{eq: epsilon map}
    \varepsilon: K_{par}G   &\to \mathcal{B} \\
                    z       &\to 1 \triangleleft z. \notag
\end{align}
\begin{lemma} \label{l: varepsilon formulas}
    Let $\varepsilon$ be the map \eqref{eq: epsilon map}, then
    \begin{enumerate}[(i)]
        \item $\varepsilon([g_1][g_2] \ldots [g_n]) =  [g_n^{-1}] \ldots [g_2^{-1}] [g_1^{-1}][g_1][g_2] \ldots [g_n]= e_{g_{n}^{-1}} e_{g_{n}^{-1} g_{n-1}^{-1}} \ldots e_{g_{n}^{-1} g_{n-1}^{-1} \ldots g_2^{-1} g_1^{-1}}$ for all $g_1, \ldots, g_n \in G$;
        \item $\varepsilon(e_{g}) = e_{g}$ for all $g \in G$, and consequently $\varepsilon(w) = w$ for all $w \in \mathcal{B}$;
        \item $[h^{-1}]\varepsilon(z) = \varepsilon(z[h])[h^{-1}]$ for all $h \in G$.    \end{enumerate}
\end{lemma}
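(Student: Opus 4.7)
The plan is to handle the three parts in turn, with the bulk of the work devoted to part (i).

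For \textbf{(i)}, I would argue by induction on $n$. The base case $n=1$ is direct: $\varepsilon([g_1]) = 1\triangleleft [g_1] = [g_1^{-1}]\cdot 1 \cdot [g_1] = e_{g_1^{-1}}$, which simultaneously matches both claimed expressions. For the inductive step I would use that $\varepsilon$ is a right $K_{par}G$-module map, so
\[
\varepsilon([g_1]\cdots [g_n]) = \varepsilon([g_1]\cdots [g_{n-1}]) \triangleleft [g_n] = [g_n^{-1}]\,\varepsilon([g_1]\cdots [g_{n-1}])\,[g_n].
\]
Inserting the inductive hypothesis gives the first equality at once. For the second equality I would substitute the inductive expression $\varepsilon([g_1]\cdots [g_{n-1}]) = e_{g_{n-1}^{-1}} \cdots e_{g_{n-1}^{-1}\cdots g_1^{-1}}$ and push $[g_n^{-1}]$ through each idempotent using the rule $[g_n^{-1}]e_h = e_{g_n^{-1} h}[g_n^{-1}]$ from Proposition \ref{p: computations rules}(i). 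The trailing factor $[g_n^{-1}][g_n]$ collapses to $e_{g_n^{-1}}$, which commutes to the front by Proposition \ref{p: computations rules}(ii), delivering precisely the prescribed product of idempotents.

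For \textbf{(ii)}, the identity $\varepsilon(e_g)=e_g$ is immediate from (i) applied to the length-two word $[g][g^{-1}]$: the second formula in (i) yields $e_{(g^{-1})^{-1}}\,e_{(g^{-1})^{-1} g^{-1}} = e_g \, e_1 = e_g$, using $e_1 = 1$. The statement $\varepsilon(w)=w$ for arbitrary $w\in\mathcal{B}$ is then immediate from the identity $u \triangleleft w = wu$ recorded in the excerpt (for $u,w\in\mathcal{B}$): namely $\varepsilon(w) = 1\triangleleft w = w\cdot 1 = w$.

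For \textbf{(iii)}, I would expand the right-hand side using that $\varepsilon$ is a right module map:
\[
\varepsilon(z[h])\,[h^{-1}] = (\varepsilon(z)\triangleleft [h])\,[h^{-1}] = [h^{-1}]\,\varepsilon(z)\,[h][h^{-1}] = [h^{-1}]\,\varepsilon(z)\,e_h.
\]
Since $\varepsilon(z)\in \mathcal{B}$ and $\mathcal{B}$ is commutative (being generated by the commuting idempotents of Proposition \ref{p: computations rules}(ii),(iv)), one may swap $\varepsilon(z)$ past $e_h$ to get $[h^{-1}]\,e_h\,\varepsilon(z)$. Finally, Proposition \ref{p: computations rules}(i) gives $[h^{-1}]e_h = e_{h^{-1}h}[h^{-1}] = e_1[h^{-1}] = [h^{-1}]$, and the left-hand side emerges.

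I anticipate no conceptual obstacle; the only step requiring attention is the bookkeeping inside part (i), where the subscripts of the idempotents must be accumulated correctly as $[g_n^{-1}]$ is transported across the product $e_{g_{n-1}^{-1}}\cdots e_{g_{n-1}^{-1}\cdots g_1^{-1}}$.
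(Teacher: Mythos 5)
Your proposal is correct. Parts (i) and (ii) are handled exactly as the paper intends (the paper simply declares them direct consequences of Proposition~\ref{p: computations rules}, and your induction with the rule $[g]e_h = e_{gh}[g]$ is the natural way to carry that out). For part (iii), the only part the paper writes out in detail, you take the opposite direction: the paper starts from the left-hand side $[h^{-1}]\varepsilon([g_1]\cdots[g_n])$, uses the explicit idempotent expansion from (i), transports $[h^{-1}]$ across each factor, and reassembles the result as $\varepsilon([g_1]\cdots[g_n][h])[h^{-1}]$; you instead expand the right-hand side via $\varepsilon(z[h]) = \varepsilon(z)\triangleleft[h] = [h^{-1}]\varepsilon(z)[h]$, reduce to $[h^{-1}]\varepsilon(z)e_h$, and finish using commutativity of $\mathcal{B}$ together with $[h^{-1}]e_h = e_1[h^{-1}] = [h^{-1}]$. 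Your version is slightly more economical and more general in that it never needs the explicit formula from (i) or a reduction to monomials --- only that $\varepsilon$ is a right module map landing in the commutative algebra $\mathcal{B}$ --- whereas the paper's computation doubles as a concrete illustration of how the idempotent subscripts shift. Both arguments rest on the same computation rules and are equally valid.
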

\begin{proof}
    Items $(i)$ and $(ii)$ are direct consequences of Proposition~\ref{p: computations rules}. For $(iii)$ observe that
    \begin{align*}
        [h^{-1}]\varepsilon([g_1][g_2] \ldots [g_n])
        &= [h^{-1}]e_{g_{n}^{-1}} e_{g_{n}^{-1} g_{n-1}^{-1}} \ldots e_{g_{n}^{-1} g_{n-1}^{-1} \ldots g_2^{-1} g_1^{-1}} \\
        (\text{By Proposition~\ref{p: computations rules}})&= e_{h^{-1}g_{n}^{-1}} e_{h^{-1}g_{n}^{-1} g_{n-1}^{-1}} \ldots e_{h^{-1}g_{n}^{-1} g_{n-1}^{-1} \ldots g_2^{-1} g_1^{-1}}e_{h^{-1}}[h^{-1}] \\
        &= e_{h^{-1}}e_{h^{-1}g_{n}^{-1}} e_{h^{-1}g_{n}^{-1} g_{n-1}^{-1}} \ldots e_{h^{-1}g_{n}^{-1} g_{n-1}^{-1} \ldots g_2^{-1} g_1^{-1}}[h^{-1}] \\
        &= \varepsilon([g_1]\ldots[g_n][h])[h^{-1}].
    \end{align*}
\end{proof}

\begin{proposition}[Proposition 2.5 \cite{exe}] \label{p: basic decomposition of elements of SG}
    Let $G$ be a group, then any element of $z \in \mathcal{S}(G)$ admits a decomposition
    \[
        z = e_{g_1} e_{g_2} \ldots e_{g_n} [h],
    \]
    where $n \geq 0$ and $g_1, \ldots, g_n \in G$. In addition, one can assume that
    \begin{enumerate}[(i)]
        \item $g_i \neq g_j$, for $i \neq j$,
        \item $g_i \neq s$ for any $i$.
    \end{enumerate}
    Furthermore, this decomposition is unique up to the order of the $\{ g_i \}$.
\end{proposition}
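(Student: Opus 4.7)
The plan is to separate existence from uniqueness. For existence, I would first establish the key identity $[s][t] = e_s[st]$ in $\mathcal{S}(G)$. Starting from relation~(iii) with the second generator replaced by $[s^{-1}]$, we obtain $[s][s^{-1}][s] = [s s^{-1}][s] = [1][s] = [s]$; combining this with relation~(ii), which yields $[s^{-1}][st] = [s^{-1}][s][t]$, we compute
\[
 e_s\,[st] \;=\; [s][s^{-1}][st] \;=\; [s][s^{-1}][s][t] \;=\; [s][t].
\]
Iterating this identity rewrites any word $[t_1][t_2]\cdots[t_k]$ in the generators as
\[
 [t_1]\cdots[t_k] \;=\; e_{t_1}\, e_{t_1 t_2}\cdots e_{t_1 t_2 \cdots t_{k-1}}\,[t_1 t_2 \cdots t_k].
\]
Using idempotency and commutativity of the $e_g$ from Proposition~\ref{p: computations rules}, one deletes repeated factors, and the identity $e_h[h]=[h]$ (the special case $s=h$ of the computation above) allows removal of any $e_{g_i}$ with $g_i = h$. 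This produces a decomposition satisfying (i) and (ii).

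For uniqueness, my plan is to exhibit a faithful set-theoretic model of $\mathcal{S}(G)$. Let $\mathcal{P}_G$ be the inverse monoid whose elements are pairs $(A,g)$ with $A$ a finite subset of $G$ containing both $1$ and $g$, with product $(A,g)(B,h) = (A \cup gB,\, gh)$ and identity $(\{1\}, 1)$. I would define $\varphi\colon \mathcal{S}(G) \to \mathcal{P}_G$ on generators by $\varphi([t]) = (\{1,t\}, t)$. A direct calculation in $\mathcal{P}_G$ shows that $\varphi$ respects each of the three defining relations of $\mathcal{S}(G)$, so it descends to a well-defined semigroup homomorphism. One then computes $\varphi(e_g) = (\{1,g\},1)$ and, more generally,
\[
 \varphi\bigl(e_{g_1}\cdots e_{g_n}[h]\bigr) \;=\; \bigl(\{1, g_1, \ldots, g_n, h\},\; h\bigr).
\]
If two decompositions represent the same $z \in \mathcal{S}(G)$, their $\varphi$-images coincide; the second coordinate determines $h$, and under assumption~(ii) (together with the implicit convention $g_i \neq 1$, which is free since $e_1 = 1$), the first coordinate determines the set $\{g_1,\ldots,g_n\} = A \setminus \{1,h\}$. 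This forces the two decompositions to agree up to reordering.

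The main obstacle is the uniqueness half. Existence is essentially a symbolic manipulation with the relations collected in Proposition~\ref{p: computations rules}. Uniqueness, by contrast, requires guessing the correct model $\mathcal{P}_G$ and mechanically verifying that $\varphi$ respects relations~(ii) and~(iii) — routine but the bookkeeping needs care. Since $\mathcal{P}_G$ is the standard realization of $\mathcal{S}(G)$ in the partial-action literature, an equivalent route is to cite Exel's structural description of $\mathcal{S}(G)$ from \cite{E6} and read off the normal form directly from the pair $(A,g)$.
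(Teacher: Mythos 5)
The paper does not prove this statement; it is quoted verbatim from Exel (Proposition~2.5 of \cite{exe}, see also \cite{E6}), so there is no in-paper argument to compare against. Your proposal is a correct, self-contained proof and follows essentially the standard route found in the literature. The existence half is fine: the identity $[s][t]=e_s[st]$ follows exactly as you derive it from relations (ii) and (iii) together with $[s][s^{-1}][s]=[s]$, and iterating it puts any word in the generators into the form $e_{g_1}\cdots e_{g_n}[h]$; one should perhaps remark that every element of the inverse monoid $\mathcal{S}(G)$ \emph{is} a word in the generators $[t]$ alone, since the relations force $[t]^{*}=[t^{-1}]$, so no formal inverses need to be adjoined. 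Your uniqueness argument via the monoid $\mathcal{P}_G$ of pairs $(A,g)$ (the Birget--Rhodes/Szendrei expansion of $G$) is the standard separating model: the verification that $\varphi$ respects the three relations goes through (e.g.\ both sides of relation (iii) map to $(\{1,s,st\},s)$ and both sides of (ii) to $(\{1,s^{-1},t\},t)$), and since a homomorphism sends equal elements to equal images, reading off $h$ from the second coordinate and $\{g_1,\dots,g_n\}=A\setminus\{1,h\}$ from the first forces the normal forms to agree up to reordering. You are right to flag the convention $g_i\neq 1$ (absorbed because $e_1=1$); without it, uniqueness as stated would fail, and indeed condition (ii) in the paper's transcription contains a typo ($g_i\neq s$ should read $g_i\neq h$). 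No gaps.
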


\begin{proposition}[Proposition 10.5 \cite{E6}] \label{p: KparG universal property} 
    The map
    \[
       g \in G \mapsto [g] \in  K_{par}G
    \]
    is a partial representation, which we will call the \textbf{universal partial representation} of $G$. In addition, for any partial representation $\pi$ of $G$ in a unital $K$-algebra $A$ there exists a unique algebra homomorphism $\phi : K_{par}G \rightarrow A$, such that $\pi(g) = \phi([g])$, for any $g \in G.$
\end{proposition}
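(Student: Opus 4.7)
The first claim — that $g \mapsto [g]$ is a partial representation of $G$ in $K_{par}G$ — is essentially a direct restatement of the defining relations of $\mathcal{S}(G)$. Relation $(i)$ of the semigroup presentation gives axiom $(c)$ of Definition~\ref{d: partial representation}; relations $(ii)$ and $(iii)$, interpreted inside the semigroup algebra $K_{par}G$, are precisely axioms $(b)$ and $(a)$. So the first statement requires no work beyond unpacking definitions.

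For the universal property, the plan is to factor $\phi$ through the monoid $\mathcal{S}(G)$. First I would show that $\pi$ induces a well-defined monoid homomorphism $\tilde{\pi} : \mathcal{S}(G) \to (A,\cdot)$ sending the generator $[g]$ to $\pi(g)$. Since $\mathcal{S}(G)$ is presented by generators and relations, this reduces to checking that the elements $\pi(g) \in A$ satisfy the three relations defining $\mathcal{S}(G)$ — but those are exactly axioms $(c)$, $(b)$, $(a)$ of a partial representation, which hold by hypothesis. Applying the universal property of the semigroup $K$-algebra $K_{par}G = K\mathcal{S}(G)$, the map $\tilde{\pi}$ extends uniquely by $K$-linearity to a unital $K$-algebra homomorphism $\phi : K_{par}G \to A$, and by construction $\phi([g]) = \pi(g)$ for every $g \in G$.

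For uniqueness, suppose $\phi' : K_{par}G \to A$ is another algebra homomorphism with $\phi'([g]) = \pi(g)$ for all $g \in G$. By Proposition~\ref{p: basic decomposition of elements of SG}, every element of $\mathcal{S}(G)$ is a product of generators of the form $[g]$ and $[g^{-1}]$, so $\phi'$ agrees with $\phi$ on $\mathcal{S}(G)$; by $K$-linearity they agree on all of $K_{par}G$.

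The main obstacle is the well-definedness of $\tilde{\pi}$, but since the semigroup $\mathcal{S}(G)$ was engineered so that its defining relations coincide with the partial representation axioms, this step is completely formal. The only real content of the proposition is this careful matching of two presentations — one by algebra-theoretic axioms and the other by semigroup generators and relations.
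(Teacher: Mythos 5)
Your proposal is correct and is essentially the standard argument from Exel's book, which the paper cites for this result without reproducing a proof: the defining relations of $\mathcal{S}(G)$ are by design the partial representation axioms, so $\pi$ induces a monoid homomorphism $\mathcal{S}(G) \to (A, \cdot)$, which extends uniquely to a unital algebra map $K\mathcal{S}(G) = K_{par}G \to A$, and uniqueness follows since the $[g]$ generate $\mathcal{S}(G)$ as a monoid. The only point worth noting is that this reading requires $\mathcal{S}(G)$ to be the \emph{monoid} presented by those generators and relations (which then happens to be inverse), rather than a presentation in the category of inverse monoids; this is how Exel sets it up, so your argument goes through.
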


A direct consequence of Proposition~\ref{p: KparG universal property} is the following well-known theorem:

\begin{theorem} \label{t: partial representations and KparG-modules isomorphism}
    The categories $\textbf{ParRep}_G$ and $K_{par}G$-\textbf{Mod} are isomorphic.
\end{theorem}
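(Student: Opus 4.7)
The plan is to exhibit two functors between the categories in question and verify that they are mutually inverse on both objects and morphisms, which is precisely what it means for an isomorphism of categories. The whole argument runs on the universal property recorded in Proposition~\ref{p: KparG universal property}.

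First I would construct a functor $F \colon \textbf{ParRep}_G \to K_{par}G\textbf{-Mod}$. Given an object $(M, \pi)$, the map $\pi \colon G \to \operatorname{End}_K(M)$ is a partial representation, so by Proposition~\ref{p: KparG universal property} it extends uniquely to a $K$-algebra homomorphism $\phi_\pi \colon K_{par}G \to \operatorname{End}_K(M)$ with $\phi_\pi([g]) = \pi(g)$. This algebra map turns $M$ into a left $K_{par}G$-module via $z \cdot m := \phi_\pi(z)(m)$. For a morphism $f \colon (M,\pi) \to (W,\pi')$ of partial representations, the intertwining condition $f \circ \pi(g) = \pi'(g) \circ f$ for every $g \in G$ extends, by $K$-linearity and multiplicativity, to $f \circ \phi_\pi(z) = \phi_{\pi'}(z) \circ f$ for every $z \in K_{par}G$; thus $f$ is automatically $K_{par}G$-linear. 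Functoriality is immediate from the uniqueness clause of the universal property.

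In the other direction I would construct $H \colon K_{par}G\textbf{-Mod} \to \textbf{ParRep}_G$. Given a left $K_{par}G$-module $M$, the action corresponds to an algebra homomorphism $\phi \colon K_{par}G \to \operatorname{End}_K(M)$; define $\pi_M \colon G \to \operatorname{End}_K(M)$ by $\pi_M(g) := \phi([g])$. The defining relations of $\mathcal{S}(G)$ and Definition~\ref{d: partial representation} match exactly, so $\pi_M$ is a partial representation. A $K_{par}G$-linear map $f \colon M \to W$ satisfies, in particular, $f([g] \cdot m) = [g] \cdot f(m)$, which is precisely the intertwining condition for $\pi_M$ and $\pi_W$. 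Again functoriality is clear.

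Finally I would check that $H \circ F$ and $F \circ H$ are the identity functors, not merely naturally isomorphic to the identity. If $(M,\pi)$ is a partial representation, then $H(F(M,\pi))$ has underlying module $M$ and partial representation $g \mapsto \phi_\pi([g]) = \pi(g)$, so we recover $(M,\pi)$ on the nose. Conversely, if $M$ is a $K_{par}G$-module coming from the algebra map $\phi$, then $F(H(M))$ is the module structure induced by the unique extension of $g \mapsto \phi([g])$; by the uniqueness in Proposition~\ref{p: KparG universal property} this extension equals $\phi$, so the module structure is unchanged. On morphisms both composites act as the identity since the underlying $K$-linear map is never altered. I do not anticipate a real obstacle here; the only thing to be careful about is invoking uniqueness in the universal property to conclude equality (not just isomorphism) of the two module structures, which is what upgrades an equivalence to an isomorphism of categories.
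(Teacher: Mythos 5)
Your proposal is correct and follows essentially the same route as the paper, which presents the theorem as a direct consequence of Proposition~\ref{p: KparG universal property} and records exactly the two constructions you describe ($[g]\cdot m := \pi_g(m)$ and $\pi_g(m) := [g]\cdot m$). You simply spell out the verification on morphisms and the identity of the composite functors, which the paper leaves implicit.
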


The key constructions in Theorem~\ref{t: partial representations and KparG-modules isomorphism} are as follows: if $\pi: G \to \operatorname{End}_K(M)$ is a partial group representation, then $M$ becomes a $K_{par}G$-module with the action defined as $[g] \cdot m := \pi_g(m)$. Conversely, if $M$ is a $K_{par}G$-module, we obtain a partial representation $\pi: G \to \operatorname{End}_K(M)$ such that $\pi_g(m):= [g] \cdot m$. Keeping in mind Theorem~\ref{t: partial representations and KparG-modules isomorphism}, we use the terms \textit{partial group representation} and $K_{par}G$\textit{-module} interchangeably.
 \begin{definition} \label{d: partial group action}
     A (left) partial group action $\theta = \big( G, X, \{ X_{g}\}_{g \in G}, \{ \theta_{g} \}_{g \in G} \big)$ of a group $G$ on a set $X$ consists of a family indexed by $G$ of subsets $X_{g} \subseteq X$ and a family of bijections $\theta_{g}: X_{g^{-1}} \to X_{g}$ for each $g \in G$, satisfying the following conditions:
     \begin{enumerate}[(i)]
        \item $X_{1} = X$ and $\theta_{1} = 1_{X}$,
        \item $\theta_{g}(X_{g^{-1}}\cap X_{g^{-1}h}) \subseteq X_{g} \cap X_{h}$,
        \item $\theta_{g}\theta_{h}(x) = \theta_{gh}(x)$, for each $x \in X_{h^{-1}} \cap X_{h^{-1}g^{-1}}$.
     \end{enumerate}
 \end{definition}
Let $X$ be a set, and let $A$, $B$, $D$, $C$ be subsets of $X$. Suppose $\psi: A \to B$ and $\phi: D \to C$ are bijective functions. The partial composition of $\phi \circ \psi$ is defined as the function whose domain is the larger set where $\phi(\psi(x))$ makes sense, i.e., $\text{dom}(\phi \circ \psi) = \psi^{-1}(A \cap C)$. It is worth mentioning that conditions $(ii)$ and $(iii)$ are equivalent to $\theta_{g} \theta_{h}$ being a restriction of $\theta_{gh}$ for all $g, h \in G$. \begin{definition} \label{d: partial action map}
   Let $\alpha = (G, X, \{ X_g \}, \{ \alpha_g \}) $ and $\beta = (S, Y, \{ Y_h \},\{ \beta _{h}\}) $ be partial actions, a morphism of partial actions $\phi$ is a pair $(\phi_0, \phi_1)$ where $\phi_0: X \to Y$ is a map of sets and $\phi_1: G \to S$ is a morphism of groups such that
   \begin{enumerate}[(i)]
   	\item $ \phi_0(X_g) \subseteq  Y_{\phi_1(g)}$,
	\item $ \phi_0(\alpha_g(x)) = \beta_{\phi_1(g)}(\phi_0(x)) $, for all $x \in X_{g^{-1}}$.
   \end{enumerate}
 \end{definition}

  In Definition~\ref{d: partial action map}, if we have $G = S$ and $\phi_0 = \operatorname{id}_{G}$, then we obtain the classical definition of a $G$-equivariant map for partial group actions of a group $G$.
 \begin{definition}
  Let $\alpha$ be a partial group action of $G$ on $X$ and $\beta$ a partial group action of $G$ on $Y$. A $G$-\textbf{equivariant map} $\phi: X \to Y$ is a function such that:
     \begin{enumerate}[a)]
         \item $\phi(X_g) \subseteq Y_g$ for all $g \in G$,
         \item $\phi(\alpha_g(x)) = \beta_g(\phi(x))$ for all $g \in G$ and $x \in X_{g^{-1}}$.
     \end{enumerate}
 \end{definition}
The category of partial group actions of a fixed group $G$ on sets with $G$-equivariant maps will be denoted by $G_{par}$-\textbf{Set}.

\subsection{Homology of partial group representations} \label{s: homological algebra}

We use Weibel's book \cite{weibel_1994} and Rotman's book \cite{RotmanAnInToHoAl} as our primary references for homological algebra theory. The group homology of a $G$-module $M$ is based on the space of coinvariants $M_G := M / DM$, where $DM$ is the submodule of $M$ generated by $\{ g \cdot m - m : m \in M, \, g \in G \}$. In this case, one can verify that $M_{G} \cong K \otimes_{KG} M$. Hence, the functor $(-)_{G}: G\textbf{-Mod} \to K\textbf{-Mod}$ is a right exact functor, and we can define the homology of $G$ with coefficients in a $G$-module $M$ as the left derived functor of the functor of coinvariants. Thus, we define
\[
    H_\bullet(G, M) := \operatorname{Tor}^{KG}_{\bullet}(K, M).
\]
The standard resolution $C''_\bullet \to K$ of right $KG$-modules 
\begin{equation} \label{eq: standar resolution of K}
   \cdots \to KG^{\otimes n + 1} \to KG^{\otimes n + 1} \to \cdots KG^{\otimes 2} \to KG \overset{\hat{\varepsilon}}{\to} K
\end{equation}
is given by $C''_n = KG^{\otimes n + 1}$, $\hat{\varepsilon}: g \in KG \mapsto 1 \in K$, and with differential determined by the face maps
    \[
        d_{i}(g_1 \otimes g_2 \otimes \ldots \otimes g_n \otimes g_{n+1}) :=   
        \left\{\begin{matrix}
            g_2 \otimes \ldots \otimes g_n \otimes g_{n+1} & \text{ if } i = 0;\\ 
            g_1 \otimes \ldots \otimes g_i g_{i+1} \otimes \ldots \otimes g_{n+1} & \text{ if } 0 < i \leq n. 
        \end{matrix}\right.
    \]
Thus, if $M$ is a $KG$-module, the canonical complex that computes the group homology of $M$ with coefficients in $M$ is $C''_\bullet \otimes_{KG} M \cong (C_n(G, M), d)$, such that $C_n(G, M) := KG^{\otimes n} \otimes M$ and $d$ is determined by the face maps
    \[
        d_{i}(g_1 \otimes g_2 \otimes \ldots \otimes g_n \otimes m) :=   
        \left\{\begin{matrix}
            g_2 \otimes \ldots \otimes g_n \otimes m & \text{ if } i = 0;\\ 
            g_1 \otimes \ldots \otimes g_i g_{i+1} \otimes \ldots \otimes m & \text{ if } 0 < i < n; \\ 
            g_1 \otimes \ldots \otimes g_{n-1} \otimes g_n \cdot m & \text{ if } i = n. 
        \end{matrix}\right.
    \]
For any $K_{par}G$-module $M$ we can define the space of coinvariants as $M_G:= M / DM$, where $DM$ is the $K$-submodule of $M$ generated by $\{ [g] \cdot m - e_{g^{-1}} \cdot m : g \in G, \, m \in M \}$.
Observe that if \(M\) is a representation of \(G\) then this \textit{partial} space of coinvariants is just the usual space of coinvariants $M_G$.
Furthermore, a direct computation shows that $M_G \cong \mathcal{B} \otimes_{K_{par}G} M$ as $K$-modules. Hence, we can define the partial group homology of $G$ with coefficients in a $K_{par}G$-module $M$ as follows:

\begin{definition}[\hspace{-0.005em}\cite{AlAlRePartialCohomology}, \cite{MMAMDDHKPartialHomology}]
    Let $G$ be a group, we define the partial group homology of $G$ with coefficients in a left $K_{par}G$-module $M$ as
    \begin{equation}
        H_\bullet^{par}(G,M) := \operatorname{Tor}_\bullet^{K_{par}}(\mathcal{B}, M).
    \end{equation}
    Dually, we define the partial group cohomology of $G$ with coefficients in a right $K_{par}G$-module $M$ as
    \begin{equation}
        H^\bullet_{par}(G,M) := \operatorname{Ext}^\bullet_{K_{par}}(\mathcal{B}, M).
    \end{equation}
\end{definition}

To compute the partial (co)homology, we can use a projective resolution of right $K_{par}G$-modules of $\mathcal{B}$. 
For the particular case $\mathcal{H}_{par} = K_{par}G$, one can use the projective resolution constructed in \cite[Section 6]{MDEJHopf}.
Hence, we obtain a projective resolution 
\begin{equation} \label{eq: standar resolution of B}
    \cdots \to K_{par}G^{\otimes_{\mathcal{B}}  n + 2 } \to K_{par}G^{\otimes_{\mathcal{B}}n + 1}  \to \cdots K_{par}G^{\otimes_{\mathcal{B}}2} \to K_{par}G \overset{\varepsilon}{\to} \mathcal{B}
\end{equation}
$(C'_\bullet, d) \overset{\varepsilon}{\to} \mathcal{B}$ of right $K_{par}G$-modules, such that $C'_n:= (K_{par}G)^{\otimes_\mathcal{B} n+1}$, with differential determined by
 \[
     d =  \sum_{i = 0}^{n} (-1)^{i}d_i : C'_n \to C'_{n-1}.
 \]
 where the face maps are
\begin{equation} \label{eq: d0}
    d_0(z_0 \otimes_\mathcal{B} z_1 \otimes_\mathcal{B} \ldots \otimes_\mathcal{B} z_n) = z_0^* z_0 z_1 \otimes_\mathcal{B} z_2 \otimes_\mathcal{B} \ldots \otimes_\mathcal{B} z_n,
\end{equation}
\begin{equation} \label{eq: di}
    d_i(z_i \otimes_\mathcal{B} z_1 \otimes_\mathcal{B} \ldots \otimes_\mathcal{B} z_n) = z_0 \otimes_\mathcal{B} \ldots \otimes_\mathcal{B} \ldots \otimes_\mathcal{B} z_{i-1}z_i \otimes_\mathcal{B} \ldots \otimes_\mathcal{B} z_n,\text{ for } 0 < i \leq n,
\end{equation}
where $z_i \in \mathcal{S}(G)$.
We can obtain clearer description of the face maps on the basic $n$-chains $z_0 \otimes_{\mathcal{B}} z_1 \otimes_{\mathcal{B}} \ldots \otimes_{\mathcal{B}} z_n$, where $z_i \in \mathcal{S}(G).$ By Proposition~\ref{p: computations rules} and Proposition~\ref{p: basic decomposition of elements of SG}, there exist $u \in E(\mathcal{S}(G))$ and $g_0, g_1, \ldots, g_n \in G$ such that
 \[
     z_0 \otimes_{\mathcal{B}} \ldots \otimes_{\mathcal{B}} z_n = [g_0] \otimes_{\mathcal{B}} [g_1] \otimes_{\mathcal{B}} \ldots \otimes_{\mathcal{B}} [g_n]u.
 \]
 Hence, the Equations \eqref{eq: d0} and \eqref{eq: di} takes the form 
 \begin{align} \label{eq: d0 right}
     d_0([g_0] \otimes_\mathcal{B} \ldots \otimes_\mathcal{B} [g_n]u) 
     &= e_{g_0^{-1}}[g_1] \otimes_{\mathcal{B}} \ldots \otimes_{\mathcal{B}} [g_n]u \\
     &= [g_1] \otimes_{\mathcal{B}} \ldots \otimes_{\mathcal{B}} [g_n]e_{g_n^{-1} g_{n-1}^{-1} \ldots g_1^{-1}g_0^{-1}}u, \notag
 \end{align}
 \begin{align} \label{eq: di right}
     d_i([g_0] \otimes_\mathcal{B} \ldots \otimes_\mathcal{B} [g_n]u) 
     &= [g_0] \otimes_\mathcal{B} [g_1] \otimes_\mathcal{B} \ldots \otimes_{\mathcal{B}} [g_{i-1}][g_i] \otimes_{\mathcal{B}}\ldots \otimes_\mathcal{B} [g_n]u \\
     &= [g_0] \otimes_\mathcal{B} \ldots \otimes_{\mathcal{B}} [g_{i-1}g_i] \otimes_{\mathcal{B}}\ldots \otimes_\mathcal{B} [g_n]e_{g_i^{-1} g_{i-1}^{-1} \ldots g_1^{-1}g_0^{-1}}u, \notag
 \end{align}
 for $0 < i \leq n$.
 
\section{Partial group actions on modules}

The main objective of this work is to compare partial group (co)homology with traditional group homology. To achieve this, we focus on partial group actions on modules and explore possible globalization theorems for these actions. This allows us to compare the homology of a partial group action with the homology of its globalization.

\begin{definition}
    Let $G$ be a group, $\mathcal{A}$ a $K$-algebra, and $M$ be a left (right) $\mathcal{A}$-module. A (left) partial group action of $G$ on $M$ is a set-theoretical partial action $\theta = (G, M, \{ M_g \}, \{ \theta_g \})$ of $G$ on $M$ such that each domain $M_g$ is a left (right) $\mathcal{A}$-submodule of $M$ and $\theta_g: M_{g^{-1}} \to M_g$ is an isomorphism of left (right) $\mathcal{A}$-modules. 
\end{definition}

We can extend the concept of partial group action morphism and $G$-equivariant map to partial group actions on $\mathcal{A}$-modules simply by requiring the corresponding maps to be $\mathcal{A}$-linear maps. The category whose objects are partial group actions on modules and morphisms of partial group actions on left (right) $\mathcal{A}$-modules will be denoted by $\textbf{PA-}_\mathcal{A}\textbf{Mod}$ ($\textbf{PA-}\textbf{Mod}_{\mathcal{A}}$), and for a fixed group $G$ the respective category whose objects are partial group actions of $G$ on left (right) $\mathcal{A}$-modules and the morphisms are partial $G$-equivariant maps of modules will be denoted by $G_{par}\text{-}_{\mathcal{A}}\textbf{Mod}$ ($G_{par}\textbf{-Mod}_{\mathcal{A}}$). By abuse of notation, if $\mathcal{A}=K$, we denote the category of partial group actions on $K$-modules simply by $\textbf{PA-Mod}$ and the category of partial group action of a group $G$ on $K$-modules by $G_{par}\textbf{-Mod}$. Furthermore, if $\theta$ is a (left) partial group action of $G$ on the module $M$, we will refer to $M$ as a (left) $G_{par}$-module, always keeping in mind the partial action $\theta$.

\begin{example} \label{e: G-modules are Gpar-modules}
    Let $\alpha$ be a group action of $G$ on a $K$-module $M$. Then it is clear that $\alpha$ is a partial group action. We will refer to this kind of partial action as a \textbf{global action} to emphasize its global nature. Furthermore, there exist an obvious embedding functor $G\textbf{-Mod} \to G_{par}\textbf{-Mod}$.
\end{example}

 \begin{example}\label{e: partial action induced by a partial representation}
     Let $\pi: G \to \operatorname{End}_{K}(M)$ be a partial representation.
     Then, for all $g \in G$ we define $M_g := e_g \cdot M$ and $\theta_g := \pi_g|_{M_{g^{-1}}}$.
     Then, $\theta := \big( G, M, \{ M_g \}, \{ \theta_g \} \big)$ is a partial group action of $K$-modules of $G$ on $M$.
     Indeed, it is clear that $M_1 = M$ and $\theta_1 = \operatorname{id}_{M}$.
     Since $[g][g^{-1}] e_g \cdot m = e_g \cdot m$ for all $g \in G$ and $m \in M$, we conclude that $\theta_g: M_{g^{-1}} \to M_g$ is a well-defined $K$-linear isomorphism. Let $m \in M_{g^{-1}} \cap M_{g^{-1}h}$, then:
    \[
        \theta_g(m)= [g] \cdot m = [g]e_{g^{-1}}e_{g^{-1}h} \cdot m = e_g e_h [g] \cdot m \in M_g \cap M_{h}.
    \]
    Let $m \in M_{h^{-1}} \cap M_{h^{-1}g^{-1}}$, then
    \[
        \theta_g \theta_h(m) = [g][h]e_{h^{-1}} e_{h^{-1}g^{-1}} \cdot m = [gh] e_{h^{-1}} e_{h^{-1}g^{-1}} \cdot m = \theta_{gh}(m).
    \]
    Thus, $\theta$ is a partial group action. We call this partial group action the \textbf{induced partial action} of $G$ on the $K_{par}G$-module $M$. Let $\phi: M \to N$ be a morphism of $K_{par}G$-modules. Then $\phi$ is a $G$-equivariant map with respect to the partial group actions \(\alpha\) and \(\beta\) induced by $M$ and $N$, respectively. Indeed, observe that 
    \[
        \phi(M_g)= \phi(e_g \cdot M) = e_g \cdot \phi(M) \subseteq e_g \cdot N = N_g,
    \]
    and
    \[
        \phi(\alpha_g(e_{g^{-1}}\cdot m)) = \phi([g]e_{g^{-1}} \cdot m) = [g] \cdot  \phi(e_{g^{-1}} \cdot m) = \beta_g(\phi(e_{g^{-1}} \cdot m).
    \]
     Hence, we obtain a well-defined functor $\Upsilon: \textbf{ParRep}_G \to G_{par}\textbf{-Mod}$.
 \end{example}

 \begin{remark} \label{r: domain of partial action induced by a partial representation}
     Observe that in Example~\ref{e: partial action induced by a partial representation} we have that $M_g = [g] \cdot M$. Indeed, it is clear that $e_g \cdot M \subseteq [g] \cdot M$ for all $g \in G$. Then, $[g] \cdot M = [g] e_{g^{-1}} \cdot M \subseteq [g][g^{-1}] \cdot M = e_{g} \cdot M$ for all $g \in G$.
 \end{remark}

 \begin{example}
     Let $X = K_{par}G$, then the corresponding partial group action of $G$ on the right $K_{par}G$-module $X$ is given by:
     \begin{enumerate}[(i)]
        \item $X_g := e_{g^{-1}}K_{par}G$
        \item $\theta_g(e_{g^{-1}}z) := [g] e_{g^{-1}}z = [g]z$ for all $z \in K_{par}G$.
     \end{enumerate}
     It is clear that each $X_g$ is a right $K_{par}G$-module with the inherited structure of $K_{par}G$ and that $\theta_g$ is an isomorphism of right $K_{par}G$-modules.
 \end{example}

\begin{proposition} \label{p: set-theoretical partial actions induces module partial actions}
    Let $\theta = (G, X, \{ X_g \}_{g \in G}, \{ \theta_g \}_{g \in G})$ be a set-theoretical partial group action of $G$ on $X$. Then, $\theta$ determines a partial group action $\hat{\theta}:= \big( G, KX, \{ KX_{g} \}_{g \in G}, \{ \hat{\theta}_g \}_{g \in G} \big)$ of $G$ on the free $K$-module $KX$, where $\hat{\theta}_{g}: KX_{g^{-1}} \to KX_g$ is the $K$-linear isomorphism determined by $\theta_g$.
\end{proposition}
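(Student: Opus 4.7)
The plan is to verify the three axioms of a partial group action from Definition~\ref{d: partial group action} for $\hat{\theta}$, by exploiting the fact that $X$ is a $K$-basis of $KX$ so that linear extensions of set maps are controlled by their behavior on the basis.

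First I would set up the data precisely: $KX_g$ is the free $K$-submodule of $KX$ spanned by $X_g$, and $\hat{\theta}_g: KX_{g^{-1}} \to KX_g$ is the unique $K$-linear extension of $\theta_g: X_{g^{-1}} \to X_g$. The latter is an isomorphism of $K$-modules because $\theta_g$ is a bijection between basis sets, so its inverse is the $K$-linear extension of $\theta_{g^{-1}} = \theta_g^{-1}$. Axiom (i) is immediate: $X_1 = X$ gives $KX_1 = KX$, and $\theta_1 = \operatorname{id}_X$ extends to $\hat{\theta}_1 = \operatorname{id}_{KX}$.

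For axioms (ii) and (iii) the key observation is the basis-level identity
\[
KX_a \cap KX_b = K(X_a \cap X_b)
\]
for any subsets $X_a, X_b \subseteq X$. I would establish this by noting that each side is a free $K$-submodule of $KX$ with basis contained in $X$: an element of $KX_a \cap KX_b$ has a unique expansion in the basis $X$ whose support lies both in $X_a$ and in $X_b$, hence in $X_a \cap X_b$. Granted this, axiom (ii) follows because
\[
\hat{\theta}_g\bigl(K(X_{g^{-1}} \cap X_{g^{-1}h})\bigr) = K\,\theta_g(X_{g^{-1}} \cap X_{g^{-1}h}) \subseteq K(X_g \cap X_h),
\]
using that $\hat{\theta}_g$ sends the basis $X_{g^{-1}}$ bijectively onto $X_g$ and applying condition (ii) for $\theta$.

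For axiom (iii), the intersection identity reduces the claim to checking the equality $\hat{\theta}_g \hat{\theta}_h(x) = \hat{\theta}_{gh}(x)$ on basis elements $x \in X_{h^{-1}} \cap X_{h^{-1}g^{-1}}$, where it is exactly condition (iii) for $\theta$, and then extending by $K$-linearity. The only mildly subtle point, which I would make explicit, is the intersection identity $KX_a \cap KX_b = K(X_a \cap X_b)$; everything else is a routine linear extension of the set-theoretic axioms, so I do not expect any genuine obstacle.
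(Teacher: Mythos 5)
Your proposal is correct and follows essentially the same route as the paper: expand an element of $KX_{g^{-1}} \cap KX_{g^{-1}h}$ (resp.\ the relevant intersection for axiom (iii)) as a $K$-linear combination of basis elements lying in the corresponding set-theoretic intersection, apply the axioms for $\theta$ termwise, and extend by linearity. The only difference is that you make explicit the identity $KX_a \cap KX_b = K(X_a \cap X_b)$, which the paper's proof uses implicitly when it writes $z = \sum_i \lambda_i x_i$ with $x_i \in X_{g^{-1}} \cap X_h$; this is a point in your favor rather than a divergence.
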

\begin{proof}
    Let $z \in KX_{g^{-1}} \cap KX_h$, then $z = \sum_i \lambda_i x_i$, such that $x_i \in X_{g^{-1}} \cap X_h$ for all $i$. Therefore, 
    \[
        \hat{\theta}_g(z) = \sum_i \lambda_i \theta_g(x_i) \in KX_g \cap KX_{gh}.
    \]
    Let $z \in X_{g^{-1}} \cap X_{g^{-1}h^{-1}}$, then $z = \sum_i \lambda_i x_i$, such that $x_i \in X_{g^{-1}} \cap X_{g^{-1}h^{-1}}$ for all $i$. Thus,
    \[
        \hat{\theta}_h\hat{\theta}_g(z) = \sum_i \lambda_i \theta_h \theta_g(x_i) = \sum_i \lambda_i \theta_{hg}(x_i) = \hat{\theta}_{hg}(z).
    \]
\end{proof}

Let $\alpha = (G, X, \{ X_g \}, \{ \alpha_g \})$ and $\beta = (H, Y, \{ Y_g \}, \{ \beta_g \})$ be two partial group actions and $\phi=(f, \varphi): \alpha \to \beta$ a morphism of partial actions. Then, $\phi$ induces a morphism of partial actions $(\hat{f}, \varphi):\hat{\alpha} \to \hat{\beta}$ where $\hat{\alpha}$ and $\hat{\beta}$ are the partial actions on modules determined by $\alpha$ and $\beta$ respectively as in Proposition~\ref{p: set-theoretical partial actions induces module partial actions} and $\hat{f}: KX \to KY$ is the $K$-linear extension of $f$. Therefore, we obtain a functor $\textbf{PA} \to \textbf{PA-Mod}$.

\begin{proposition} \label{p: set-theoretical partial actions determines partial representations}
    Let $\theta = (G, X, \{ X_g \}_{g \in G}, \{ \theta_g \}_{g \in G})$ be a set-theoretical partial group action of $G$ on $X$, and $K$ be a commutative unital ring.
    Then, $\theta$ determines a partial representation $\pi^{\theta} := \pi$ of $G$ on $KX$, such that 
    \[
        \pi_g(x)=\left\{\begin{matrix}
            \theta_g(x) & \text{ if } x \in X_{g^{-1}}, \\ 
            0 & \text{ otherwise.} 
        \end{matrix}\right.
    \]
\end{proposition}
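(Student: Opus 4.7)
The plan is to extend the defining formula $K$-linearly and then verify the three axioms of Definition~\ref{d: partial representation} directly. Since $X$ is a $K$-basis of $KX$ and $X = X_{g^{-1}} \sqcup (X \setminus X_{g^{-1}})$, the formula yields a well-defined $K$-linear endomorphism $\pi_g: KX \to KX$; concretely, $\pi_g = \hat{\theta}_g \circ p_{g^{-1}}$, where $\hat{\theta}_g$ is the extension from Proposition~\ref{p: set-theoretical partial actions induces module partial actions} and $p_{g^{-1}}: KX \to KX_{g^{-1}}$ is the projection onto the summand generated by $X_{g^{-1}}$. Axiom (c) is immediate from $X_{1} = X$ and $\theta_{1} = 1_{X}$.

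For axioms (a) and (b) it suffices to evaluate both sides on an arbitrary basis element $x \in X$, and then to verify equality by case analysis on which domains $x$ (and its image $\theta_{t^{-1}}(x)$) lie in. The standard reformulation of the partial action axioms—namely that $\theta_{g}$ restricts to a bijection $X_{g^{-1}} \cap X_{g^{-1}h} \to X_g \cap X_h$ with inverse $\theta_{g^{-1}}$, and that $\theta_g \theta_h = \theta_{gh}$ wherever both sides are defined—is the only tool needed.

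Specifically for (a), if $x \notin X_t$ then $\pi(t^{-1})(x) = 0$ and both sides vanish. If $x \in X_t$, then $\pi(t)\pi(t^{-1})(x) = \theta_t \theta_{t^{-1}}(x) = x$, so the left side equals $\theta_s(x)$ when $x \in X_{s^{-1}}$ and $0$ otherwise. On the right, $\pi(st)\pi(t^{-1})(x) = \theta_{st}(\theta_{t^{-1}}(x))$ whenever $\theta_{t^{-1}}(x) \in X_{(st)^{-1}} = X_{t^{-1}s^{-1}}$ and vanishes otherwise. Applying axiom~(ii) of Definition~\ref{d: partial group action} with $g = t^{-1}$ and $h = t^{-1}s^{-1}$ (so that $g^{-1}h = s^{-1}$), one obtains the bijection $\theta_{t^{-1}}: X_t \cap X_{s^{-1}} \to X_{t^{-1}} \cap X_{t^{-1}s^{-1}}$, which matches the vanishing of both sides and then identifies the non-vanishing values via axiom~(iii), since $\theta_{st}\theta_{t^{-1}} = \theta_{s}$ on the relevant intersection. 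Axiom (b) follows by a symmetric argument, applying the same bijections to the factor $\pi(s^{-1})$ on the left.

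The main obstacle is purely combinatorial bookkeeping: making sure the case split is exhaustive and that the domains of definition on the two sides coincide exactly, so that zeros on one side are matched by zeros on the other. Once the bijection $\theta_{g}: X_{g^{-1}} \cap X_{g^{-1}h} \to X_{g} \cap X_{h}$ is invoked with the right choice of $h$, everything reduces to a direct application of axioms (ii) and (iii) of partial actions, and no further ideas are required.
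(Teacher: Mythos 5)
Your proposal is correct and follows essentially the same route as the paper: both verify axioms (a)--(c) by evaluating on basis elements of $KX$ and running a case analysis on which domains $x$ and its images lie in, using axioms (ii) and (iii) of Definition~\ref{d: partial group action}. Your use of the bijection $\theta_{g}\colon X_{g^{-1}} \cap X_{g^{-1}h} \to X_{g} \cap X_{h}$ merely packages the paper's nested sub-cases (matching the vanishing loci on both sides) into a single statement, and proving (a) in detail with (b) by symmetry mirrors the paper's choice of proving (b) and citing symmetry for (a).
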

\begin{proof}
    Clearly, $\pi_1 = \operatorname{id}_{KX}$. Let $g,h \in G$ and $x \in X$, then
    \begin{enumerate}[(i)]
        \item if $x \notin X_{h^{-1}}$, then $\pi_{g^{-1}}\pi_{g}\pi_h(x)=  0$, and $\pi_{g^{-1}}\pi_{gh}(x) = 0$ since:
            \begin{enumerate}[(a)]
                \item if $x \notin X_{h^{-1}g^{-1}}$, then
                    \[
                        \pi_{g^{-1}}\pi_{gh}(x)=  0,
                    \]
                \item if $x \in X_{h^{-1}g^{-1}}$, then
                    \[
                        \pi_{g^{-1}}\pi_{gh}(x)= \pi_{g^{-1}}(\theta_{gh}(x)),
                    \]
                    \begin{enumerate}[(1)]
                        \item if $\theta_{gh}(x) \notin X_{g}$, then
                            \[
                                \pi_{g^{-1}}\pi_{gh}(x)= \pi_{g^{-1}}(\theta_{gh}(x)) = 0,
                            \]
                        \item if $\theta_{gh}(x) \in X_{g}$, then 
                            \[
                                \theta_{g^{-1}}\theta_{gh}(x) = \theta_{h}(x),
                            \]
                            therefore $x \in X_{h^{-1}}$, which contradicts the hypotheses of (i). Thus, $\theta_{gh}(x) \notin X_g$.
                    \end{enumerate}
            \end{enumerate}
        \item If $x \in X_{h^{-1}}$, then $\pi_{g^{-1}}\pi_{g}\pi_{h}(x) = \pi_{g^{-1}}\pi_{g}(\theta_h(x))$,
            \begin{enumerate}[(a)]
                \item if $\theta_h(x) \in X_{g^{-1}}$, then
                    \[
                        \pi_{g^{-1}}\pi_{g}\pi_{h}(x) = \pi_{g^{-1}}\theta_{g}(\theta_h(x)) = \pi_{g^{-1}} \theta_{gh}(x) = \pi_{g^{-1}}\pi_{gh}(x),
                    \]
                \item if $\theta_h(x) \notin X_{g^{-1}}$, then $\pi_{g^{-1}}\pi_{g}\pi_{h}(x) = 0$, and
                    \begin{enumerate}[(1)]
                        \item if $x \notin X_{h^{-1}g^{-1}}$, then $\pi_{g^{-1}}\pi_{gh}(x) = 0$,
                        \item if $x \in X_{h^{-1}g^{-1}}$, then $\theta_{gh}(x) \notin X_g$ and consequently $\pi_{g^{-1}} \pi_{gh}(x) = \pi_{g^{-1}}(\theta_{gh}(x))=0$. Indeed, note that if $\theta_{gh}(x) \in X_{g}$, then $\theta_h(x) = \theta_{g^{-1}}\theta_{gh}(x) \in X_{g^{-1}}$, which contradicts (b) of (ii), thus $\theta_{gh}(x) \notin X_{g}$.
                    \end{enumerate}
            \end{enumerate}
    \end{enumerate}
    Henceforth, $\pi_{g^{-1}}\pi_g \pi_h = \pi_{g^{-1}}\pi_{gh}$ for all $g,h \in G$, analogously one proves that $\pi_{h}\pi_{g}\pi_{g^{-1}} = \pi_{hg}\pi_{g^{-1}}$ for all $g,h \in G$.
\end{proof}

\begin{remark}
    Let $\theta$ be a partial group action of a group $G$ on a set $X$, then by Proposition~\ref{p: set-theoretical partial actions determines partial representations} we have a partial group representation $\pi^{\theta}: G \to \operatorname{End}_K(KX)$. Then, by Theorem~\ref{t: partial representations and KparG-modules isomorphism} $KX$ is a $K_{par}G$-module with the left action determined by
    \begin{equation}
        [g] \cdot x =\left\{\begin{matrix}
            \theta_g(x) & \text{ if } x \in X_{g^{-1}}, \\ 
            0 & \text{ otherwise.} 
        \end{matrix}\right.
    \end{equation}
\end{remark}

Proposition~\ref{p: set-theoretical partial actions determines partial representations} allows us to study partial group actions through the partial group representations they determine. However, maps between partial group actions do not necessarily induce morphisms between the corresponding partial representations. In other words, the linearization of set-theoretical partial group actions into partial representations is not functorial.

\begin{example}
    Let $X = \{ x, y \}$ be the set with two points, and let $G= \{ 1, \, g : g^{2}=1 \}$ be the cyclic group of order $2$. Define the partial action $\alpha:= (G, X, \{ D_g \}, \{ \alpha_g \})$ such that $D_g := \{ x \}$ and $\alpha_g(x)=x$, consider $\beta$ as the trivial group action of $G$ on $X$. It is clear that the identity map $\operatorname{id}: X \mapsto X$ is a $G$-equivariant map from $(\alpha, X)$ to $(\beta, X)$. But the $K$-linear map $\operatorname{id}: KX \to KX$ is not a morphism of $K_{par}G$-modules since
    \[
        \operatorname{id}([g] \cdot_{\alpha} y) = \operatorname{id}(0)=0 \text{ and } [g] \cdot_{\beta} \operatorname{id}(y) = [g] \cdot_{\beta} y = y,
    \]
    where $\cdot_{\alpha}$ and $\cdot_{\beta}$ denotes the left actions of $K_{par}G$ on $KX$ induced by $\alpha$ and $\beta$ respectively.
\end{example}

\begin{remark}
    Let $\theta$ be a set-theoretical partial group action, then by Proposition~\ref{p: set-theoretical partial actions induces module partial actions} we have a partial group action $\hat{\theta}$ of $G$ on $KX$, and by Proposition~\ref{p: set-theoretical partial actions determines partial representations} we obtain a partial group representation $\pi^{\theta}$. Thus, it is immediate that the partial action determined by $\pi^{\theta}$ is $\hat{\theta}$.
\end{remark}

Analogously to the definition of left partial group action, we can define a right partial group action. In particular, we will use right partial group actions on $\mathcal{A}$-modules, where $\mathcal{A}$ is an algebra.

\begin{definition}
    A right partial group action $\beta$ of $G$ on a left (right) $\mathcal{A}$-module $X$ is a left partial group action of $G^{op}$ on $X$, explicitly a right partial group action of $G$ on $X$ is a tuple $\beta:=\big(G, X, \{ X_g \}, \{ \beta_{g} \} \big)$, where $\beta_{g}: X_{g^{-1}} \to X_{g}$ is an isomorphism of left (right) $\mathcal{A}$-modules for all $g \in G$, such that
    \begin{enumerate}[(i)]
        \item $X_1 = X$ and $\beta_1 = 1_{X}$;
        \item $(X_{h} \cap X_{g^{-1}})\beta_{g} \subseteq X_{hg} \cap X_g$;
        \item $(x)\beta_g \beta_h = (x)\beta_{gh}$ for all $x \in X_{h^{-1}g^{-1}} \cap X_{g^{-1}}$,
    \end{enumerate}
    where $(x)\beta_{g} := \beta_g(x)$.
\end{definition}

The category of right partial group actions on modules of a group $G$ will be denoted by \textbf{Mod}-$G_{par}$. Thus, by definition we have a natural isomorphism between the categories $G^{op}_{par}$-\textbf{Mod} and \textbf{Mod}-$G_{par}$. Analogously, to the left case, if $\beta$ is a right partial group action on the module $M$ we say that $M$ is a right $G_{par}$-module.

\textbf{Notation.} 
Given a group $G$ and a set/module $X$ we use the notation $\alpha: G \curvearrowright X$ to denote the partial group action of $G$ on $X$, such that $\alpha := (G, X, \{ X_g \}_{g \in G}, \{ \alpha_g \}_{g \in G})$. Analogously, we use the notation $\beta : X \curvearrowleft G$ to denote the right partial group action of $G$ on $X$, such that $\beta := (G, X, \{ X_g \}_{g \in G}, \{ \beta_g \}_{g \in G})$. 

\begin{example}
    Analogously to Example~\ref{e: partial action induced by a partial representation}, if $M$ is a right $K_{par}G$-module, it determines a right partial group action $\theta:= (G, M, \{ M_{g} \}, \{ \theta_g \})$ of $G$ on $M$ such that:
    \begin{enumerate}[(i)]
        \item $M_{g}:= M \cdot e_{g^{-1}}$;
        \item $\theta_{g}: M_{g^{-1}} \to M_{g}$ is such that $(m)\theta_{g}:= m \cdot [g]$, for all $g \in G$ and $m \in M_{g^{-1}}$.
    \end{enumerate}
    Thus, we obtain a functor $\textbf{Mod-}K_{par}G \to \textbf{Mod-}G_{par}$.
\end{example}

\begin{example}
    Let $X = K_{par}G$, then we have a right partial action of left $K_{par}G$-modules defined as follows:
    \begin{enumerate}[(i)]
       \item $X_g := K_{par}G e_{g^{-1}}$
       \item $(z e_{g})\theta_g := z e_{g} [g] = z[g]$ for all $z \in K_{par}G$.
    \end{enumerate}
    It is clear that each $X_g$ is a left $K_{par}G$-module with the inherited structure of $K_{par}G$ and that $\theta_g$ is an isomorphism of left $K_{par}G$-modules.
\end{example}

Partial actions induced by partial representations will be one of the principal objects of study in this work. Thus, here we show some properties of this type of partial group action.

\begin{lemma} \label{l: partial actions induced by partial representations}
    Let $M$ and $N$ be two left $K_{par}G$-modules, and let $f: M \to N$ be a morphism between them. Let $\alpha: G \curvearrowright M$ and $\beta: G \curvearrowright N$ be the respective induced partial action. Then,
    \begin{enumerate}[(i)]
        \item $[g] \cdot m = \alpha_g(e_{g^{-1}} \cdot m)$ for all $g \in G$ and $m \in M$;
        \item $\dom \alpha_{g_1}\alpha_{g_2} \ldots \alpha_{g_n} = \varepsilon([g_1][g_2] \ldots [g]) \cdot M$ for all $g_1, \ldots, g_n \in G$;
        \item Let $y \in N_{g} \cap \im f$, then there exist $x \in M_g$ such that $f(x) = y$.
    \end{enumerate}
\end{lemma}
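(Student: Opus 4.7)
The plan is to prove the three parts in order, since (ii) relies conceptually on (i) and the description $M_g = e_g \cdot M$ from Remark~\ref{r: domain of partial action induced by a partial representation}.

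For (i), I would simply unwind the definition $\alpha_g = \pi_g|_{M_{g^{-1}}}$ with $\pi_g(-) = [g] \cdot (-)$, and use the partial representation identity $[g][g^{-1}][g] = [g]$ (which is the axiom $[s][t][t^{-1}] = [st][t^{-1}]$ with $s = g, t = g^{-1}$, combined with $[1_G] = 1$). Thus $\alpha_g(e_{g^{-1}} \cdot m) = [g]e_{g^{-1}} \cdot m = [g][g^{-1}][g] \cdot m = [g] \cdot m$.

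For (ii), I would induct on $n$. The base case $n=1$ gives $\dom \alpha_{g_1} = M_{g_1^{-1}} = e_{g_1^{-1}} \cdot M = \varepsilon([g_1]) \cdot M$ by Lemma~\ref{l: varepsilon formulas}(i). The key preliminary observations are: first, $M_h = e_h \cdot M$, and since the $e_h$ are commuting idempotents, finite intersections $\bigcap_k M_{h_k}$ coincide with $\bigl(\prod_k e_{h_k}\bigr) \cdot M$ (one inclusion is trivial; the other uses $e_h \cdot m = m$ whenever $m \in M_h$). Second, axiom (ii) of a partial action together with its symmetric counterpart yields $\alpha_{g_n}(M_{g_n^{-1}} \cap M_{g_n^{-1} s}) = M_{g_n} \cap M_s$, so that $\alpha_{g_n}(m) \in M_s$ is equivalent to $m \in M_{g_n^{-1}} \cap M_{g_n^{-1} s}$. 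Combining the inductive hypothesis
\[
    \dom(\alpha_{g_1} \cdots \alpha_{g_{n-1}}) = \bigcap_{k=1}^{n-1} M_{g_{n-1}^{-1} g_{n-2}^{-1} \cdots g_k^{-1}}
\]
with the equivalence above applied to $s = g_{n-1}^{-1} \cdots g_k^{-1}$ for each $k$, I obtain
\[
    \dom(\alpha_{g_1} \cdots \alpha_{g_n}) = M_{g_n^{-1}} \cap \bigcap_{k=1}^{n-1} M_{g_n^{-1} g_{n-1}^{-1} \cdots g_k^{-1}},
\]
which equals $\varepsilon([g_1] \cdots [g_n]) \cdot M$ by Lemma~\ref{l: varepsilon formulas}(i) and the first preliminary observation. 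The main obstacle here is keeping the indexing straight and rigorously justifying the bijection $\alpha_{g_n}(M_{g_n^{-1}} \cap M_{g_n^{-1}s}) = M_{g_n} \cap M_s$ (an equality rather than just the inclusion stated in Definition~\ref{d: partial group action}(ii)), which follows from applying axiom (ii) both to $\alpha_{g_n}$ and to $\alpha_{g_n^{-1}}$ and using $\alpha_{g_n^{-1}} = \alpha_{g_n}^{-1}$ on $M_{g_n}$.

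For (iii), I would use Remark~\ref{r: domain of partial action induced by a partial representation} to write $N_g = e_g \cdot N$ and $M_g = e_g \cdot M$. If $y = f(m)$ and $y \in N_g$, then $e_g \cdot y = y$ since $y = e_g \cdot w$ for some $w$ and $e_g$ is idempotent. Setting $x := e_g \cdot m \in M_g$, the $K_{par}G$-linearity of $f$ yields $f(x) = e_g \cdot f(m) = e_g \cdot y = y$, as desired.
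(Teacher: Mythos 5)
Your proof is correct, and parts (i) and (iii) coincide with the paper's argument essentially word for word. For part (ii) you run the same induction on $n$ and land on the same identification via Lemma~\ref{l: varepsilon formulas}(i), but you organize the inductive step set-theoretically: you describe $\dom(\alpha_{g_1}\cdots\alpha_{g_n})$ as an explicit intersection $\bigcap_k M_{g_n^{-1}\cdots g_k^{-1}}$, which requires upgrading axiom (ii) of Definition~\ref{d: partial group action} to the equality $\alpha_g(M_{g^{-1}}\cap M_{g^{-1}h}) = M_g\cap M_h$ (correctly justified via $\alpha_{g^{-1}} = \alpha_g^{-1}$). The paper instead stays algebraic: it writes $\dom(\alpha_{g_1}\cdots\alpha_{g_{n+1}}) = \alpha_{g_{n+1}^{-1}}\bigl(\dom(\alpha_{g_1}\cdots\alpha_{g_n})\cap\im\alpha_{g_{n+1}}\bigr)$ and pushes the idempotent $\varepsilon([g_1]\cdots[g_n])$ past $[g_{n+1}^{-1}]$ using Lemma~\ref{l: varepsilon formulas}(iii) and Remark~\ref{r: domain of partial action induced by a partial representation}, so it never needs the two-sided version of axiom (ii). Your version makes the geometric content (which domains intersect) more transparent; the paper's version is shorter because it reuses the $\varepsilon$-calculus already established. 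Both are complete.
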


\begin{proof}
    By the definition of $\alpha$ we know that $e_{g^{-1}} \cdot m \in M_{g^{-1}}$ for all $g \in G$ and $m \in M$. Then, $\alpha_g( e_{g^{-1}} \cdot m) = [g] e_{g^{-1}} \cdot m = [g] \cdot m$. This proves $(i)$. 
    
    By the axioms of partial action we know that $\dom \alpha_g \alpha_h = M_{h^{-1}} \cap M_{h^{-1}g^{-1}} = e_{h^{-1}}e_{h^{-1}g^{-1}} \cdot M$, and $e_{h^{-1}}e_{h^{-1}g^{-1}}= [h^{-1}][g^{-1}][g][h] =\varepsilon([g][h])$. Thus, by induction, suppose that $\dom \alpha_{g_1} \alpha_{g_2} \ldots \alpha_{g_n} = \varepsilon([g_1][g_2] \ldots [g_n]) \cdot M$, then
    \begin{align*}
       \dom \alpha_{g_1} \ldots \alpha_{g_n} \alpha_{g_{n+1}}  
        &= \alpha_{g_{n+1}^{-1}} \big( \dom \alpha_{g_1} \ldots \alpha_{g_n} \cap \im \alpha_{g_{n+1}} \big)\\
        &= \alpha_{g_{n+1}^{-1}} \big(\varepsilon([g_1][g_2] \ldots [g_n]) \cdot M \cap e_{g_{n+1}} \cdot M \big)\\
        &= \alpha_{g_{n+1}^{-1}} \big(e_{g_{n+1}}\varepsilon([g_1][g_2] \ldots [g_n]) \cdot M \big) \\
        &= [g_{n+1}^{-1}]e_{g_{n+1}}\varepsilon([g_1][g_2] \ldots [g_n]) \cdot M \\
        (\text{by Lemma~\ref{l: varepsilon formulas}})&= \varepsilon([g_1][g_2] \ldots [g_n][g_{n+1}]) [g_{n+1}^{-1}]\cdot M \\
        (\text{by Remark~\ref{r: domain of partial action induced by a partial representation}}) &= \varepsilon([g_1][g_2] \ldots [g_n][g_{n+1}]) e_{g_{n+1}^{-1}}\cdot M \\
        &= \varepsilon([g_1][g_2] \ldots [g_n][g_{n+1}]) \cdot M,
    \end{align*}
    what proves $(ii)$. For $(iii)$, observe that if $y \in N_g \cap \text{im} , f$, then $e_g \cdot y = y$, and there exists $m \in M$ such that $f(m) = y$. Hence, $y = e_g \cdot y = e_g \cdot f(m) = f(e_g \cdot m)$. Therefore, setting $x = e_g \cdot m \in M_g$ yields the desired element.
\end{proof}

\underline{From now on}, when dealing with a partial representation on $M$ (or equivalently a $K_{par}G$-module), we will always take into account the partial group action on $M$ inherited from the partial representation. This is crucial to consider, as we will carry out constructions using partial representations and immediately regard them as partial group actions without explicitly mentioning the functor $\Upsilon$.

In the category $G_{par}$-\textbf{Mod}, if $\alpha: G \curvearrowright M$ is a partial group action on the $K$-module $M$, we can define the space of coinvariants of $M$ as $M_{G}:= M / DM$, where $DM$ is the $K$-submodule of $M$ generated by $\{ \alpha_g(m) - m : g \in G, \, m \in M_{g^{-1}} \}$. This is clearly a generalization of the global case. We can consider the trivial group action on $M_G$, then the canonical map $\pi: M \to M_{G}$ is a $G$-equivariant map. Indeed, for all $g \in G$ and $m \in M_{g}$ we have
\[
    \pi_G(\alpha_g(m)) = \overline{\alpha_g(m)} = \overline{m} = g \cdot \overline{m} = g \cdot \pi_G(m).
\]
Furthermore, note that if \(M\) is a \(K_{par}G\)-module then the space of coinvariants of \(M\) as \(K_{par}G\)-module is equal to the space of coinvariants of \(M\) as \(G_{par}\)-module.

\subsection{Partial tensor product}

Since we have introduced the concept of left and right $G_{par}$-modules, it is natural to ask how to define a tensor product version for this kind of partial structures.
Let $\beta: X \curvearrowleft G$ and $\alpha: G \curvearrowright Y$ be partial actions on modules. Define $\mathcal{K}_{\beta, \alpha}$ as the $K$-submodule of $X \otimes_{K} Y$ generated by the set
\[
    \big\{ (x)\beta_g \otimes y - x \otimes \alpha_g(y) : g \in G, \, x \in X_{g^{-1}} \text{ and } y \in Y_{g^{-1}}\big\}.
\]

Hence, we define the \textbf{partial tensor product} of the right $G_{par}$-module $X$ and the left $G_{par}$-module $Y$ as
\begin{equation}
    X \otimes_{G_{par}} Y := \frac{X \otimes_K Y}{\mathcal{K}_{\beta, \alpha}}.
\end{equation}
Notice that we obtain a bilinear map $\otimes_{G_{par}}: X \times Y \to X \otimes_{G_{par}} Y$. Thus, we write $x \otimes_{G_{par}} y$ to denote $\otimes_{G_{par}}(x,y)$.
   
\begin{proposition}
    Let $\beta: X \curvearrowleft G$ and $\alpha: G \curvearrowright Y$ be partial actions on modules, and $Z$ be a $K$-module. Suppose that $\phi: X \times Y \to Z$ is a bilinear map such that $\phi((x)\beta_g, y) = \phi(x, \alpha_g(y))$ for all $g \in G$, $x \in X_{g^{-1}}$, $y \in Y_{g^{-1}}$. Then, there exists a unique $K$-linear map $\tilde{\phi}: X \otimes_{G_{par}} Y \to Z$ such that the following diagram commutes:
    \[\begin{tikzcd}
        {X \times Y} & Z \\
        {X \otimes_{G_{par}} Y}
        \arrow["\phi", from=1-1, to=1-2]
        \arrow["{\otimes_{G_{par}}}"', from=1-1, to=2-1]
        \arrow["{\tilde{\phi}}"', dashed, from=2-1, to=1-2]
    \end{tikzcd}\]
   \label{p: universal property of partial tensor product}
\end{proposition}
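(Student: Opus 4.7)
The plan is to factor the universal property of the partial tensor product through the universal property of the ordinary tensor product $X \otimes_K Y$. Since $\phi : X \times Y \to Z$ is $K$-bilinear, the universal property of the ordinary tensor product furnishes a unique $K$-linear map $\bar{\phi} : X \otimes_K Y \to Z$ satisfying $\bar{\phi}(x \otimes y) = \phi(x,y)$ for all $x \in X$, $y \in Y$.

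Next I would verify that $\bar{\phi}$ vanishes on the submodule $\mathcal{K}_{\beta, \alpha}$. By construction $\mathcal{K}_{\beta,\alpha}$ is $K$-linearly spanned by the elements $(x)\beta_g \otimes y - x \otimes \alpha_g(y)$ with $g \in G$, $x \in X_{g^{-1}}$, $y \in Y_{g^{-1}}$, so it suffices to test on these generators. The hypothesis on $\phi$ gives exactly
\[
    \bar{\phi}\big((x)\beta_g \otimes y - x \otimes \alpha_g(y)\big) = \phi((x)\beta_g, y) - \phi(x, \alpha_g(y)) = 0.
\]
Hence $\mathcal{K}_{\beta,\alpha} \subseteq \ker \bar{\phi}$, and by the universal property of the quotient of $K$-modules there exists a unique $K$-linear map $\tilde{\phi} : X \otimes_{G_{par}} Y \to Z$ such that $\tilde{\phi} \circ q = \bar{\phi}$, where $q : X \otimes_K Y \to X \otimes_{G_{par}} Y$ denotes the canonical projection. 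Composing with $\otimes_K : X \times Y \to X \otimes_K Y$ shows that $\tilde{\phi}(x \otimes_{G_{par}} y) = \phi(x,y)$, which is the commutativity of the required diagram.

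For uniqueness, I would argue that the image of the bilinear map $\otimes_{G_{par}} : X \times Y \to X \otimes_{G_{par}} Y$ generates the codomain as a $K$-module (since already the pure tensors $x \otimes y$ generate $X \otimes_K Y$ and the projection $q$ is surjective). Any $K$-linear map agreeing with $\tilde{\phi}$ on all elements of the form $x \otimes_{G_{par}} y$ must therefore coincide with $\tilde{\phi}$ everywhere. No real obstacle is expected here: the argument is a straightforward application of two universal properties (of the ordinary tensor product and of a quotient module) and uses the hypothesis precisely at the point of checking that $\bar{\phi}$ kills the defining generators of $\mathcal{K}_{\beta,\alpha}$.
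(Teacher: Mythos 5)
Your proof is correct and follows essentially the same route as the paper's: factor the bilinear map through $X \otimes_K Y$, check that the hypothesis kills the generators of $\mathcal{K}_{\beta,\alpha}$ so the map descends to the quotient, and deduce uniqueness from the fact that the elements $x \otimes_{G_{par}} y$ generate $X \otimes_{G_{par}} Y$. Your write-up simply makes each of these steps more explicit than the paper does.
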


\begin{proof}
    Since $\phi$ is bilinear we have a linear map $\phi': X \otimes Y \to Z$, and by the hypotheses we have that $\mathcal{K}_{\beta, \alpha} \subseteq \ker \phi'$, thus such morphism exists. The uniqueness is a consequence of the fact $\tilde{\phi}(x \otimes_{G_{par}} y) = \phi(x, y)$ and that $\{ x \otimes_{G_{par}} y : x \in X \text{ and } y \in Y \}$ generates $X \otimes_{G_{par}} Y$ as $K$-module.
\end{proof}

By a classical proof of uniqueness given by universal properties one proves that $X \otimes_{G_{par}} Y$ is the unique module (up to isomorphism) that satisfies the universal property mentioned in Proposition~\ref{p: universal property of partial tensor product}.

\begin{remark} \label{r: partial tensor prouct of G-modules}
    If $M$ is a right $G$-module and $N$ a left $G$-module then $M \otimes_{G_{par}}N$ is naturally isomorphic to $M \otimes_{KG}N$.
\end{remark}

Let $\nu: M \curvearrowleft G$, $\alpha: G \curvearrowright A$ and $\beta: G \curvearrowright B$ be partial actions on $K$-modules, and let $f: A \to B$ a left $G$-equivariant map. Then, the map $f_*: M \otimes_{G_{par}} A \to M \otimes_{G_{par}} B$ such that $f_*(m \otimes_{G_{par}} a) := m \otimes_{G_{par}} f(a)$ is a well-defined map of $K$-modules. Henceforth, we obtain a functor
\begin{equation} \label{eq: partial tensor product functor i}
    M \otimes_{G_{par}} - : G_{par}\textbf{-Mod} \to K\textbf{-Mod},
\end{equation}
analogously, if $\nu: G \curvearrowright M$ is a partial action on $M$, we obtain a functor 
\begin{equation} \label{eq: partial tensor product functor ii}
    - \otimes_{G_{par}} M : \textbf{Mod-}G_{par} \to K\textbf{-Mod},
\end{equation}

\begin{remark} \label{r: partial tensor product for partial representations}
    Let $\beta: M \curvearrowleft G$ be a partial action on the module $M$. By abuse of notation we define the functor
    \[
       M \otimes_{G_{par}} - : K_{par}G \textbf{-Mod} \to K \textbf{-Mod}
    \]
    as the composition of the functor $\Upsilon$ and the functor \eqref{eq: partial tensor product functor i}. That is, if $X$ is a left $K_{par}G$-module, we consider the induced partial action $\theta: G \curvearrowright X$ of $G$ on $X$, as in Example~\ref{e: partial action induced by a partial representation}. Thus, $M \otimes_{G_{par}} X$ is the partial tensor product of the right $G_{par}$-module $M$ and the left $G_{par}$-module $X$. In this case, the module $\mathcal{K}_{\beta, \theta}$ is the $K$-submodule generated by the set
    \[
       \big\{ (m)\beta_{g} \otimes e_{g^{-1}} \cdot x - m \otimes [g] \cdot x : g \in G, m \in M_{g^{-1}} \text{ and } x \in X  \big\}.
    \]
    Analogously, if $\alpha: G \curvearrowright M$ is a partial action, we obtain a functor
    \[
       - \otimes_{G_{par}} M : \textbf{Mod-}K_{par}G  \to K \textbf{-Mod}.
    \]
\end{remark}

\begin{proposition} \label{p: partial tensor product A-R bimodule}
    Let $\mathcal{A}$ and $\mathcal{R}$ be $K$-algebras, and let $\beta: X \curvearrowleft G$ be a right partial group action of left $\mathcal{A}$-modules, and let $\alpha: G \curvearrowright Y$ be a left partial group action of right $\mathcal{R}$-modules. Then, $X \otimes_{G_{par}} Y$ is an $\mathcal{A}$-$\mathcal{R}$-bimodule.
\end{proposition}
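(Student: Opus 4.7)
The plan is to define the left $\mathcal{A}$-action and right $\mathcal{R}$-action on $X \otimes_{G_{par}} Y$ using the universal property of Proposition~\ref{p: universal property of partial tensor product}, and then verify that the axioms of a bimodule hold.

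First, I would fix $a \in \mathcal{A}$ and consider the bilinear map
\[
\phi_a : X \times Y \to X \otimes_{G_{par}} Y, \qquad (x,y) \mapsto (ax) \otimes_{G_{par}} y.
\]
To apply Proposition~\ref{p: universal property of partial tensor product}, I must check that for every $g \in G$, $x \in X_{g^{-1}}$, $y \in Y_{g^{-1}}$ we have $\phi_a((x)\beta_g, y) = \phi_a(x, \alpha_g(y))$. Since $X_{g^{-1}}$ is a left $\mathcal{A}$-submodule, $ax \in X_{g^{-1}}$, and since $\beta_g$ is $\mathcal{A}$-linear, $(ax)\beta_g = a \cdot (x)\beta_g$. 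Hence
\[
\phi_a((x)\beta_g, y) = (a \cdot (x)\beta_g) \otimes_{G_{par}} y = (ax)\beta_g \otimes_{G_{par}} y = (ax) \otimes_{G_{par}} \alpha_g(y) = \phi_a(x, \alpha_g(y)),
\]
where the third equality is the defining relation of $\mathcal{K}_{\beta,\alpha}$. The universal property then yields a unique $K$-linear map $\tilde{\phi}_a: X \otimes_{G_{par}} Y \to X \otimes_{G_{par}} Y$ determined by $\tilde{\phi}_a(x \otimes_{G_{par}} y) = (ax) \otimes_{G_{par}} y$. Define $a \cdot (x \otimes_{G_{par}} y) := \tilde{\phi}_a(x \otimes_{G_{par}} y)$ and extend $K$-linearly.

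Next, I would verify that this defines a left $\mathcal{A}$-module structure. The identities $\tilde{\phi}_{a_1 + a_2} = \tilde{\phi}_{a_1} + \tilde{\phi}_{a_2}$, $\tilde{\phi}_{\lambda a} = \lambda \tilde{\phi}_a$ for $\lambda \in K$, $\tilde{\phi}_{a_1 a_2} = \tilde{\phi}_{a_1} \circ \tilde{\phi}_{a_2}$, and $\tilde{\phi}_{1_\mathcal{A}} = \operatorname{id}$ all hold by checking them on the generators $x \otimes_{G_{par}} y$ and invoking uniqueness in the universal property. A symmetric argument using the bilinear map $\psi_r(x,y) := x \otimes_{G_{par}} (yr)$ for $r \in \mathcal{R}$ equips $X \otimes_{G_{par}} Y$ with a right $\mathcal{R}$-module structure given by $(x \otimes_{G_{par}} y) \cdot r = x \otimes_{G_{par}} (yr)$; the compatibility check is that $(x)\beta_g \otimes_{G_{par}} (yr) = x \otimes_{G_{par}} \alpha_g(yr) = x \otimes_{G_{par}} \alpha_g(y)r$, which holds because $\alpha_g$ is right $\mathcal{R}$-linear and $Y_{g^{-1}}$ is a right $\mathcal{R}$-submodule.

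Finally, the bimodule compatibility $(a \cdot \xi) \cdot r = a \cdot (\xi \cdot r)$ for $a \in \mathcal{A}$, $r \in \mathcal{R}$ and $\xi \in X \otimes_{G_{par}} Y$ reduces to the case of a generator $\xi = x \otimes_{G_{par}} y$, where both sides equal $(ax) \otimes_{G_{par}} (yr)$, and then one extends $K$-linearly. No real obstacle is expected here; the only subtle point is the well-definedness step above, where one must remember to use that the domains $X_{g^{-1}}$ (resp.\ $Y_{g^{-1}}$) are $\mathcal{A}$-submodules (resp.\ $\mathcal{R}$-submodules) and that each $\beta_g$ (resp.\ $\alpha_g$) is $\mathcal{A}$-linear (resp.\ $\mathcal{R}$-linear), which is exactly built into the definition of a partial group action on modules.
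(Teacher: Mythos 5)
Your proof is correct and rests on the same core observation as the paper's: that $ax \in X_{g^{-1}}$, $(ax)\beta_g = a\,(x)\beta_g$, $yr \in Y_{g^{-1}}$, and $\alpha_g(yr) = \alpha_g(y)r$, which the paper packages more compactly by noting that $\mathcal{K}_{\beta,\alpha}$ is an $\mathcal{A}$-$\mathcal{R}$-subbimodule of the bimodule $X \otimes_K Y$, so the quotient inherits the structure. Your route through the universal property of Proposition~\ref{p: universal property of partial tensor product} is equivalent (the hypothesis $\mathcal{K}_{\beta,\alpha} \subseteq \ker\phi'$ is exactly the subbimodule condition), just slightly longer in the bookkeeping of the module axioms.
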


\begin{proof}
   We know that $X \otimes_{K} Y$ is an $\mathcal{A}$-$\mathcal{R}$-bimodule. Observe that for all $a \in \mathcal{A}$ and $b \in \mathcal{R}$ we have that
   \[
       a \big(x \otimes \alpha_{g}(y) - (x)\beta_g \otimes y \big) b 
       = ax \otimes \alpha_g(yb) - (ax)\beta_g \otimes yb,
   \]
    therefore $\mathcal{K}_{\beta, \alpha}$ is an $\mathcal{A}$-$\mathcal{R}$-subbimodule of $X \otimes_K Y$. Whence we conclude that $X \otimes_{G_{par}} Y$ is an $\mathcal{A}$-$\mathcal{R}$-bimodule.
\end{proof}

\begin{proposition} \label{p: partial tensor product is right exact}
    Let $\beta: M \curvearrowleft G$ be a right partial group action on the $K$-module $M$. Then, the functor $M \otimes_{G_{par}} - : K_{par}G \textbf{-Mod} \to K\textbf{-Mod}$ is right exact. Similarly, if $\alpha: G \curvearrowright M$ is a left partial action on $M$, then the functor $- \otimes_{G_{par}} M : \textbf{Mod-}K_{par}G \to K \textbf{-Mod}$ is right exact.
\end{proposition}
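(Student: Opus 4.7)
The plan is a direct diagram chase that reduces the statement to the classical right exactness of the ordinary tensor product $M \otimes_K -$. Since $M \otimes_{G_{par}} -$ is additive, it suffices to fix an arbitrary short exact sequence $0 \to A \overset{f}{\to} B \overset{g}{\to} C \to 0$ of left $K_{par}G$-modules and establish that
\[
M \otimes_{G_{par}} A \overset{f_*}{\to} M \otimes_{G_{par}} B \overset{g_*}{\to} M \otimes_{G_{par}} C \to 0
\]
is exact. Write $q_A, q_B, q_C$ for the canonical surjections from $M \otimes_K A$, $M \otimes_K B$, $M \otimes_K C$ onto the respective partial tensor products; the naturality identity $g_* \circ q_B = q_C \circ (M \otimes_K g)$ will be used repeatedly.

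Surjectivity of $g_*$ comes for free: $M \otimes_K g$ is surjective by classical right exactness and $q_C$ is surjective by construction, so their composite $q_C \circ (M \otimes_K g) = g_* \circ q_B$ is surjective, forcing $g_*$ to be surjective. For exactness at $M \otimes_{G_{par}} B$, I would pick $\xi \in \ker g_*$, lift it through $q_B$ to some $\tilde\xi \in M \otimes_K B$, and observe that $(M \otimes_K g)(\tilde\xi) \in \ker q_C = \mathcal{K}_{\beta, \alpha_C}$, where $\alpha_C$ is the induced partial action on $C$. Expressing this as a finite sum of generators
\[
(M \otimes_K g)(\tilde\xi) = \sum_i \Big( (m_i)\beta_{g_i} \otimes e_{g_i^{-1}} \cdot c_i - m_i \otimes [g_i] \cdot c_i \Big),
\]
with $m_i \in M \cdot e_{g_i}$ and $c_i \in C$, I would use the surjectivity of $g$ to pick lifts $b_i \in g^{-1}(c_i) \subseteq B$ and form the matching element $\eta \in \mathcal{K}_{\beta, \alpha_B}$ obtained by replacing each $c_i$ with $b_i$. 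The $K_{par}G$-linearity of $g$ forces $g(e_{g_i^{-1}} \cdot b_i) = e_{g_i^{-1}} \cdot c_i$ and $g([g_i] \cdot b_i) = [g_i] \cdot c_i$, so $(M \otimes_K g)(\eta) = (M \otimes_K g)(\tilde\xi)$; hence $\tilde\xi - \eta \in \ker(M \otimes_K g) = \operatorname{im}(M \otimes_K f)$ by classical right exactness. Writing $\tilde\xi - \eta = (M \otimes_K f)(\zeta)$ and applying $q_B$, the identity $q_B(\eta) = 0$ yields $\xi = f_*(q_A(\zeta))$, placing $\xi$ in $\operatorname{im} f_*$. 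The proof for $- \otimes_{G_{par}} M$ runs identically with left and right swapped.

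The only step that warrants scrutiny is the lifting $c_i \mapsto b_i$ producing a generator of $\mathcal{K}_{\beta, \alpha_B}$ with the desired image under $M \otimes_K g$. This turns out to be harmless: the defining generators of $\mathcal{K}_{\beta, \alpha_\ast}$ depend linearly on the second tensor slot, and the constraint $m_i \in M \cdot e_{g_i}$ is confined to the first slot, so lifting on the right is unrestricted. Consequently I do not anticipate any serious obstruction; all the ingredients are already in place, namely the explicit description of $\mathcal{K}_{\beta, \alpha_\ast}$ from Remark~\ref{r: partial tensor product for partial representations} and the right exactness of $M \otimes_K -$ from classical homological algebra.
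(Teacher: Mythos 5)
Your argument is correct, and it takes a different (arguably cleaner) route than the paper. The paper proves exactness at the middle term by explicitly constructing an inverse to the induced map $\overline{h_*}:(M\otimes_{G_{par}}B)/\operatorname{im}f_*\to M\otimes_{G_{par}}C$: it fixes a set-theoretic lift $b_c$ of each $c\in C$, defines $\eta(m\otimes c)=\overline{m\otimes_{G_{par}}b_c}$, checks independence of the lift via $\ker h=\operatorname{im}f$, verifies the partial-tensor relations using part $(iii)$ of Lemma~\ref{l: partial actions induced by partial representations} (a preimage in $B_{g^{-1}}$ of any element of $C_{g^{-1}}\cap\operatorname{im}h$), and then descends through Proposition~\ref{p: universal property of partial tensor product}. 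You instead reduce everything to the classical right exactness of $M\otimes_K-$ and show that the relation submodule $\mathcal{K}_{\beta,\alpha_C}$ lifts onto $\mathcal{K}_{\beta,\alpha_B}$; your observation that the generators in the form of Remark~\ref{r: partial tensor product for partial representations} have an unconstrained second slot (the membership condition $m_i\in M_{g_i^{-1}}$ lives entirely in the first slot) is exactly what makes the lift unrestricted, and it lets you bypass the lemma the paper needs. What you lose is only the explicit inverse map, which the paper does not use elsewhere; what you gain is a shorter verification with no universal-property bookkeeping. Two cosmetic remarks: the proposition assumes only a right partial action $\beta$ on a $K$-module, so the constraint on the first slot should be written $m_i\in M_{g_i^{-1}}$ rather than $m_i\in M\cdot e_{g_i}$ (the latter presumes $M$ is a right $K_{par}G$-module, which is not hypothesized); and the symbol $g$ doing double duty as the surjection $B\to C$ and as group elements $g_i$ should be renamed.
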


\begin{proof}
    Let $A \overset{f}{\to} B \overset{h}{\to} C \to 0$ be an exact sequence in $K_{par}G$-\textbf{Mod}. Then, we obtain a sequence $M \otimes_{G_{par}} A \overset{f_*}{\to} M \otimes_{G_{par}} B \overset{h_*}{\to} M \otimes_{G_{par}} C \to 0$, since $h$ is surjective we obtain that $h_*$ is surjective. 
    Furthermore, since $h_* f_* = 0$, we have that $\im f_* = \ker h_*$ if and only if the linear map 
    \[
        \overline{h_*}: (M \otimes_{G_{par}} B) / \im f_* \to M \otimes_{G_{par}}C    
    \]
    is an isomorphism, where $\overline{h_*}$ is the map induced by $h_*$. 
    For all $c \in C$, choose $b_{c} \in B$ such that $h(b_{c}) = c$, and define the map $\eta: M \otimes C \to M \otimes_{G_{par}} B/ \im f_*$ such that $\eta(m \otimes c) = \overline{m \otimes_{G_{par}} b_c}$ for all $m \in M$ and $c \in C$.
    To verify that this map is well-defined we have to check that $\eta$ does not depend on the choice of $b_c$. 
    Let $b$ such that $h(b)=h(b_c)=c$, then $b - b_c \in \ker h = \im f$. Consequently, $m \otimes_{G_{par}} (b-b_c) \in \im f_{*}$, whence we have that $\eta$ is well-defined.
    
    Now, suppose that $m \in M_{g^{-1}}$ and $c \in C_{g^{-1}} = e_{g^{-1}} \cdot C$. Since $h$ is a morphism of $K_{par}G$-module, by $(iii)$ of Lemma~\ref{l: partial actions induced by partial representations}, there exists $b \in B_{g^{-1}}= e_{g^{-1}} \cdot B$ such that $h(b)=c$. Then,
   \[
       \eta\big((m)\beta_{g} \otimes c \big) = \overline{(m)\beta_{g} \otimes_{G_{par}} b_{c}} = \overline{(m)\beta_{g} \otimes_{G_{par}} b} = \overline{m \otimes_{G_{par}} [g] \cdot b} = \overline{m \otimes_{G_{par}} b_{[g] \cdot c}}= \eta \big( m \otimes [g] \cdot c \big)
   \]
    Henceforth, by Proposition~\ref{p: universal property of partial tensor product}, there exists a $K$-linear map $\overline{\eta}: M \otimes_{G_{par}} C \to M \otimes_{G_{par}} B / \im f_*$ such that $\overline{\eta}(m \otimes_{G_{par}}c) = \overline{m \otimes_{G_{par}} b_c}$. Finally, note that $\overline{h_{*}}^{-1} = \overline{\eta}$. One shows in the same fashion that $- \otimes_{G_{par}} M$ is right exact.
\end{proof}

\begin{proposition} \label{p: partial tensor product is associative}
   Let $\beta: X \curvearrowleft G$ be a partial action, and let $Y$ be a $K_{par}G$-$\mathcal{A}$-module. Then the functors
   \[
       \big(X \otimes_{G_{par}} Y \big) \otimes_{\mathcal{A}} - : \mathcal{A}\textbf{-Mod} \to K \textbf{-Mod}, \text{ and } 
       X \otimes_{G_{par}} \big( Y \otimes_{\mathcal{A}} - \big): \mathcal{A}\textbf{-Mod} \to K \textbf{-Mod}
   \]
   are isomorphic. Analogously, if $\alpha: G \curvearrowright Y$ is a partial action, $X$ is an $\mathcal{A}$-$K_{par}G$-module, then the functors $- \otimes_{\mathcal{A}} \big(X \otimes_{G_{par}} Y \big)$ and $\big( - \otimes_{\mathcal{A}} X \big) \otimes_{G_{par}} Y$ are isomorphic.
\end{proposition}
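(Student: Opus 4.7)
The plan is to construct, for each $Z \in \mathcal{A}$-\textbf{Mod}, a natural $K$-linear isomorphism
\[
   \Phi_Z: (X \otimes_{G_{par}} Y) \otimes_{\mathcal{A}} Z \;\longrightarrow\; X \otimes_{G_{par}}(Y \otimes_{\mathcal{A}} Z)
\]
by repeatedly invoking the universal property of $\otimes_{G_{par}}$ (Proposition~\ref{p: universal property of partial tensor product}) and the universal property of the $\mathcal{A}$-balanced tensor product. Both sides are well-formed: $Y \otimes_{\mathcal{A}} Z$ inherits a left $K_{par}G$-module structure from the first factor, so the right-hand side makes sense; and by Proposition~\ref{p: partial tensor product A-R bimodule}, $X \otimes_{G_{par}} Y$ is a right $\mathcal{A}$-module, so the left-hand side also makes sense. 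Naturality in $Z$ will be automatic, since both constructions act as the identity on the $Z$-slot.

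To build $\Phi_Z$, I would fix $z \in Z$ and consider the $K$-bilinear map $X \times Y \to X \otimes_{G_{par}}(Y \otimes_{\mathcal{A}} Z)$ defined by $(x,y) \mapsto x \otimes_{G_{par}}(y \otimes_{\mathcal{A}} z)$. For $x \in X_{g^{-1}}$ and $y \in Y$, using that $[g]$ acts on $Y \otimes_{\mathcal{A}} Z$ through the $Y$-factor, one checks
\[
    (x)\beta_g \otimes_{G_{par}} (e_{g^{-1}} y \otimes_{\mathcal{A}} z) = x \otimes_{G_{par}} \bigl([g] \cdot (e_{g^{-1}} y \otimes_{\mathcal{A}} z)\bigr) = x \otimes_{G_{par}}\bigl([g] \cdot y \otimes_{\mathcal{A}} z\bigr),
\]
so the bilinear map is $G_{par}$-balanced and descends (by Proposition~\ref{p: universal property of partial tensor product}) to a map $\mu_z : X \otimes_{G_{par}} Y \to X \otimes_{G_{par}}(Y \otimes_{\mathcal{A}} Z)$. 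The assignment $(u,z) \mapsto \mu_z(u)$ is $K$-bilinear, and $\mathcal{A}$-balanced because $\mu_z(x \otimes_{G_{par}} ya) = x \otimes_{G_{par}}(ya \otimes_{\mathcal{A}} z) = x \otimes_{G_{par}}(y \otimes_{\mathcal{A}} az) = \mu_{az}(x \otimes_{G_{par}} y)$; hence it factors through $(X \otimes_{G_{par}} Y) \otimes_{\mathcal{A}} Z$ to produce $\Phi_Z$.

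The inverse $\Psi_Z$ is built symmetrically. The $K$-trilinear map $(x,y,z) \mapsto (x \otimes_{G_{par}} y) \otimes_{\mathcal{A}} z$ is $\mathcal{A}$-balanced in the last two slots, hence it induces $\nu : X \otimes_K (Y \otimes_{\mathcal{A}} Z) \to (X \otimes_{G_{par}} Y) \otimes_{\mathcal{A}} Z$. To verify the $G_{par}$-balancing condition for $\nu$, take $x \in X_{g^{-1}}$ and $w \in e_{g^{-1}} \cdot (Y \otimes_{\mathcal{A}} Z)$; since $w = e_{g^{-1}} w = \sum_i (e_{g^{-1}} y_i) \otimes_{\mathcal{A}} z_i$, the check reduces term by term to the defining relation of $X \otimes_{G_{par}} Y$, yielding $\Psi_Z$. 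Verifying $\Phi_Z \Psi_Z = \operatorname{id}$ and $\Psi_Z \Phi_Z = \operatorname{id}$ is then immediate on the generators $x \otimes_{G_{par}}(y \otimes_{\mathcal{A}} z)$ and $(x \otimes_{G_{par}} y) \otimes_{\mathcal{A}} z$.

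The main subtlety, and what I expect to be the only nontrivial point, is the $G_{par}$-balancing of $\Psi_Z$: a generic element of $e_{g^{-1}} \cdot (Y \otimes_{\mathcal{A}} Z)$ is a sum rather than a pure tensor $e_{g^{-1}} y \otimes_{\mathcal{A}} z$, so one must first push the idempotent $e_{g^{-1}}$ into the $Y$-factor using that the $K_{par}G$-action on $Y \otimes_{\mathcal{A}} Z$ is induced from $Y$, and then apply the defining relation of $X \otimes_{G_{par}} Y$ summand by summand. The analogous statement with the roles of $X$ and $Y$ swapped is proved along identical lines, interchanging left and right throughout.
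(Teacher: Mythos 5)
Your argument is correct and follows essentially the same route as the paper's proof: both construct $\Phi_Z$ by first fixing $z$ and applying the universal property of $\otimes_{G_{par}}$, then factoring through the $\mathcal{A}$-balanced tensor product, and both obtain the inverse by factoring the map $x \otimes y \otimes z \mapsto (x \otimes_{G_{par}} y)\otimes_{\mathcal{A}} z$ through $X \otimes_{G_{par}}(Y \otimes_{\mathcal{A}} Z)$. The only difference is that you spell out the $G_{par}$-balancing check for the inverse (reducing a general element of $e_{g^{-1}}\cdot(Y\otimes_{\mathcal{A}}Z)$ to pure tensors), which the paper leaves implicit.
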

\begin{proof}
   Let $Z$ be a left $\mathcal{A}$-module. For a fixed $z \in Z$ consider the map
   \begin{align*}
       X \times Y &\to X \otimes_{G_{par}} (Y \otimes_{\mathcal{A}} Z) \\
       (x,y) &\mapsto x \otimes_{G_{par}}(y \otimes_{\mathcal{A}} z)
   \end{align*}
   It is clear that this map is bilinear and that satisfies the conditions of Proposition~\ref{p: universal property of partial tensor product}, thus for all $z \in Z$ we obtain a morphism of $K$-modules
   \begin{align*}
       X \otimes_{G_{par}} Y &\to X \otimes_{G_{par}} (Y \otimes_{\mathcal{A}} Z) \\
       x \otimes_{G_{par}} y &\mapsto x \otimes_{G_{par}}(y \otimes_{\mathcal{A}} z).
   \end{align*}
   Furthermore, the map
   \begin{align*}
       (X \otimes_{G_{par}} Y) \times Z &\to X \otimes_{G_{par}} (Y \otimes_{\mathcal{A}} Z) \\
       (x \otimes_{G_{par}} y, z) &\mapsto x \otimes_{G_{par}}(y \otimes_{\mathcal{A}} z)
   \end{align*}
   is an $\mathcal{A}$-bilinear map, thus we obtain a $K$-linear map
   \begin{align*}
       \eta: (X \otimes_{G_{par}} Y) \otimes_{\mathcal{A}} Z &\to X \otimes_{G_{par}} (Y \otimes_{\mathcal{A}} Z) \\
       (x \otimes_{G_{par}} y) \otimes_{\mathcal{A}} z &\mapsto x \otimes_{G_{par}}(y \otimes_{\mathcal{A}} z).
   \end{align*}
   On the other hand, we have the following commutative diagram given by Proposition~\ref{p: universal property of partial tensor product}
   \[\begin{tikzcd}
       {X \otimes_K Y \otimes_{\mathcal{A}}Z} & {(X \otimes_{G_{par}}Y) \otimes_{\mathcal{A}}Z} \\
       {X \otimes_{G_{par}}(Y \otimes_{\mathcal{A}}Z)}
       \arrow[from=1-1, to=1-2]
       \arrow["{\otimes_{G_{par}}}"', from=1-1, to=2-1]
       \arrow["\psi"', curve={height=12pt}, dashed, from=2-1, to=1-2]
   \end{tikzcd}\]
    where the horizontal morphism is given by $x \otimes_K y \otimes_{\mathcal{A}} z \mapsto (x \otimes_{G_{par}}y) \otimes_{\mathcal{A}}z$. Hence, $\psi(x \otimes_{G_{par}}(y \otimes_{\mathcal{A}}z)) = (x \otimes_{G_{par}}y) \otimes_{\mathcal{A}}z$. Note that $\psi$ and $\eta$ are mutually inverses. Finally, it is easy to see that $\eta$ determines a natural isomorphism.
\end{proof}

\subsection{Globalization of partial group actions on modules}

Analogously to set-theoretical partial group actions (see \cite{E6} for more details), we can obtain partial group actions on modules by restricting global group actions on a $K$-module $W$ to a submodule $M$ of $W$. Indeed, let $\theta: G \curvearrowright W$ be a global group action, and let $M$ be a $K$-submodule of $W$.
Define $M_g := \theta_g(M) \cap M$ and $\alpha_g:= \theta_g|_{M_{g^{-1}}}$. Then, $\alpha:= (G, M, \{ M_g \}_{g \in G}, \{ \alpha_g \}_{g \in G})$ is a partial group action of $G$ on $M$. We say that $\alpha$ is the \textbf{restriction} of $\theta$ to $M$. 

\begin{definition} \label{d: universal global partial action}
   Let $\alpha: G \curvearrowright M$ be a partial action on a $K$-module $M$, the \textbf{universal global action} of $\alpha$ is a pair $(\theta, \iota)$ where $\theta: G \curvearrowright W$ is a global group action and $\iota: M \to W$ is a $G$-equivariant map, such that for any global group action $\beta: G \curvearrowright X$ and a $G$-equivariant map $\psi: M \to X$ there exists a unique morphism $\tilde{\psi}: W \to X$ of $G$-modules such that the following diagram commutes:
   \[\begin{tikzcd}
       M & X \\
       W
       \arrow["\iota"', from=1-1, to=2-1]
       \arrow["\psi", from=1-1, to=1-2]
       \arrow["{\tilde{\psi}}"', dashed, from=2-1, to=1-2]
   \end{tikzcd}\]
\end{definition}

\begin{remark}
    Note that by a classical argument of universal property, the universal global action (if there exists) is unique up to unique isomorphism.
\end{remark}

\begin{definition}
    Let $\alpha: G \curvearrowright M$ be a partial group action and $\theta: G \curvearrowright W$ a global group action. If there exists an injective $G$-equivariant map $\iota: M \to W$ such that $\alpha$ is isomorphic to the restriction of $\theta$ to $\iota(M)$ via $\iota$ and $W = \sum_{g \in G} \theta_g(\iota(M))$, we say that $\theta$ is a \textbf{globalization} of $\alpha$.
\end{definition}

Contrary to the set-theoretical case, a globalization of partial group action on a module (if it exists) is not necessarily unique.

\begin{example} \label{e: globalization is not unique}
    Let $G := \langle g : g^{3} = 1 \rangle$, and let $\alpha:= (G, \mathbb{Z}, \{ X_h \}_{h \in G}, \{ \alpha_h \}_{h \in G})$ be the partial group action such that $X_1 = \mathbb{Z}$, $X_g = X_{g^{2}} = \{ 0 \}$, and $\alpha_1 = 1_{\mathbb{Z}}$, $\alpha_{g}(0) = \alpha_{g^2}(0)=0$. 
    Now we will construct two non-isomorphic globalizations of \(\alpha\).
    $V := \mathbb{Z} \times \mathbb{Z} \times \mathbb{Z}$, define the global action of $G$ on $V$ such that $g \cdot (x, y, z) := (z, x, y)$, thus the function $\iota: \mathbb{Z} \to V$ given by $\iota(n):= (n, 0, 0)$ is clearly a $G$-equivariant map. Moreover, note that $V$ together with $\iota$ is a globalization of $\alpha$. 

    For the second globalization consider the subgroup $N:= \{ (2n, 2n, 2n) : n \in \mathbb{Z} \}$, note that $g \cdot (2n, 2n, 2n) = (2n, 2n, 2n)$, then $W := V / N$, is a $G$-module such that $g \cdot \overline{(x, y, z)} = \overline{(z, x, y)}$, let $\phi: V \to W$ be the canonical projection. Then, $(W, \psi := \phi \circ \iota)$ is a globalization of $M$. Indeed, first observe that $\psi$ is injective. Suppose that $\psi(n) = \overline{(0, 0, 0)}$ for some $n \in \mathbb{Z}$, then $(n, 0, 0) \in N$, thus $n = 0$. Now, we have to verify that $g \cdot \overline{(n, 0, 0)} \in \psi(\mathbb{Z})$ if and only if $n = 0$, and $g^{2} \cdot \overline{(n, 0, 0)} \in \psi(\mathbb{Z})$ if and only if, $n = 0$. Note that 
    \begin{align*}
        g \cdot \overline{(n, 0, 0)} \in \psi(\mathbb{Z})
        & \Leftrightarrow \text{ there exists } m \in \mathbb{Z} \text{ such that } \overline{(0, n, 0)} = \overline{(m, 0, 0)} \\ 
        & \Leftrightarrow (-m, n, 0) \in N \\
        & \Leftrightarrow m = n = 0,
    \end{align*}
    analogously we have that $g^{2} \cdot \overline{(n, 0, 0)} \in \psi(\mathbb{Z})$ if and only if $n = 0$. Hence, the restriction of the global action on $W$ to $\psi(\mathbb{Z})$ is isomorphic to $\alpha$. Finally, observe that $V$ and $W$ are not isomorphic as $G$-modules since $W$ has torsion elements, but $V$ does not.
\end{example}

\begin{remark} \label{r: iota universal map}
    Let $\alpha: G \curvearrowright M$ be a partial action on the module $M$. Then, by Proposition~\ref{p: partial tensor product A-R bimodule}, $KG \otimes_{G_{par}}M$ is a left $G$-module. Let $\Theta: G \curvearrowright KG \otimes_{G_{par}} M$ be the associated global action of $G$ on $KG \otimes_{G_{par}}M$. Define
    \begin{align*}
        \iota: M &\to KG \otimes_{G_{par}}M \\
                m &\mapsto 1 \otimes_{G_{par}}m.
    \end{align*}
    Then, $\iota$ is a well-defined $G$-equivariant map. Indeed, observe that
    \[
        \iota(\alpha_g(m))= 1 \otimes_{G_{par}} \alpha_g(m) = g \otimes_{G_{par}}m = \Theta_{g}(\iota(m)),
    \]
    for all $g \in G$ and $m \in M_{g^{-1}}$.
\end{remark}

\begin{theorem} \label{t: existence of universal global action}
    Let $\alpha: G \curvearrowright M$ be a partial group action on the $K$-module $M$, and let $\Theta: G \curvearrowright KG \otimes_{G_{par}}M$ and $\iota: M \to KG \otimes_{G_{par}}M$ be as in Remark~\ref{r: iota universal map}. Then, $(\Theta, \iota)$ is the universal global action of $\alpha$.
\end{theorem}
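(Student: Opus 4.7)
The plan is to exploit the universal property of the partial tensor product proved in Proposition~\ref{p: universal property of partial tensor product}. We already know from Remark~\ref{r: iota universal map} that $\iota$ is a well-defined $G$-equivariant map, so what remains is to establish the universal factorization property in Definition~\ref{d: universal global partial action}.

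Given a global action $\beta: G \curvearrowright X$ and a $G$-equivariant map $\psi: M \to X$, I will construct $\tilde{\psi}$ as follows. First, consider the bilinear map
\[
    \phi: KG \times M \to X, \qquad (g,m) \mapsto \beta_g(\psi(m)),
\]
extended $K$-linearly in the first variable. To apply Proposition~\ref{p: universal property of partial tensor product}, I need to check that for all $g \in G$, $h \in G$ and $m \in M_{h^{-1}}$,
\[
    \phi(gh, m) = \phi(g, \alpha_h(m)),
\]
where the right partial action on $KG$ is right multiplication (a global action). The left side equals $\beta_{gh}(\psi(m)) = \beta_g(\beta_h(\psi(m)))$, and by $G$-equivariance of $\psi$ on $M_{h^{-1}}$ we have $\beta_h(\psi(m)) = \psi(\alpha_h(m))$, so the right side equals $\beta_g(\psi(\alpha_h(m)))$, which matches. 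Hence Proposition~\ref{p: universal property of partial tensor product} yields a unique $K$-linear map
\[
    \tilde{\psi}: KG \otimes_{G_{par}} M \to X, \qquad g \otimes_{G_{par}} m \mapsto \beta_g(\psi(m)).
\]

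Next I will verify the three required properties. First, $\tilde{\psi} \circ \iota = \psi$ because $\tilde{\psi}(1 \otimes_{G_{par}} m) = \beta_1(\psi(m)) = \psi(m)$. Second, $\tilde{\psi}$ is a morphism of $G$-modules: on a generator $h \otimes_{G_{par}} m$,
\[
    \tilde{\psi}(\Theta_g(h \otimes_{G_{par}} m)) = \tilde{\psi}(gh \otimes_{G_{par}} m) = \beta_{gh}(\psi(m)) = \beta_g(\tilde{\psi}(h \otimes_{G_{par}} m)).
\]
Third, for uniqueness, suppose $\tilde{\psi}': KG \otimes_{G_{par}} M \to X$ is any $G$-equivariant extension of $\psi$. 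Then
\[
    \tilde{\psi}'(g \otimes_{G_{par}} m) = \tilde{\psi}'(\Theta_g(\iota(m))) = \beta_g(\tilde{\psi}'(\iota(m))) = \beta_g(\psi(m)) = \tilde{\psi}(g \otimes_{G_{par}} m),
\]
and since such generators span $KG \otimes_{G_{par}} M$ as a $K$-module, we conclude $\tilde{\psi}' = \tilde{\psi}$.

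The argument is essentially a routine unravelling of universal properties; there is no real obstacle beyond being careful about where one uses that the $G$-action on $KG$ is global (so that the partial-tensor relation simplifies to $gh \otimes_{G_{par}} m = g \otimes_{G_{par}} \alpha_h(m)$ precisely when $m \in M_{h^{-1}}$) and that $\psi$ is $G$-equivariant only on the domains $M_{h^{-1}}$, which matches exactly the restriction already imposed by the defining relations of $\otimes_{G_{par}}$. This matching is what makes $KG \otimes_{G_{par}} M$ the correct universal object.
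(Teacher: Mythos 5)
Your proof is correct. It establishes the factorization property by feeding the bilinear map $(g,m)\mapsto \beta_g(\psi(m))$ into Proposition~\ref{p: universal property of partial tensor product}, checking the compatibility relation exactly where it is needed (on $M_{h^{-1}}$, which is where the defining relations of $\otimes_{G_{par}}$ live), and then verifying $G$-equivariance and uniqueness on the spanning set $\{g\otimes_{G_{par}} m\}$. The paper arrives at the same map $\tilde{\psi}(g\otimes_{G_{par}} m)=g\cdot\psi(m)$ by a slightly different route: it applies the functor $KG\otimes_{G_{par}}-$ to $\psi$ to get $\psi_*:KG\otimes_{G_{par}}M\to KG\otimes_{G_{par}}X$ and then composes with the canonical isomorphism $KG\otimes_{G_{par}}X\cong KG\otimes_{KG}X\cong X$ from Remark~\ref{r: partial tensor prouct of G-modules}, which is valid because the action on $X$ is global. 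The two constructions are equivalent, but yours has one concrete advantage: your uniqueness argument (pulling $g\otimes_{G_{par}} m=\Theta_g(\iota(m))$ through any $G$-equivariant extension of $\psi$) is spelled out completely, whereas the paper's closing sentence appeals to ``the universal property that $KG\otimes_{G_{par}}M$ satisfies,'' which as written is the very property being proven; your generator argument is the substance behind that claim. No gaps.
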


\begin{proof}
        Let $X$ be a left $G$-module and $\psi: M \to X$ a $G$-equivariant map, then $\psi_{*}: KG \otimes_{G_{par}} M \to KG \otimes_{G_{par}} X$ is a map of $G$-modules.
        Note that, by Remark~\ref{r: partial tensor prouct of G-modules}, $KG \otimes_{G_{par}} X \cong KG \otimes_{KG} X \cong X$.
        Hence, we obtain a morphism of $G$-modules $\tilde{\psi}: KG \otimes_{G_{par}} M \to X$ such that $\tilde{\psi}(g \otimes_{G_{par}} m) := g \cdot \psi(m)$. It is clear that $\psi = \tilde{\psi} \circ \iota$. Finally, the uniqueness of $\tilde{\psi}$ is due to the universal property that satisfies $KG \otimes_{G_{par}} M$ (see Definition~\ref{d: universal global partial action}).
\end{proof}

\begin{remark}
    Theorem~\ref{t: existence of universal global action} and the functor \eqref{eq: partial tensor product functor i} guarantee the existence of a functor 
    \begin{equation}
        KG \otimes_{G_{par}} - : G_{par}\textbf{-Mod} \to G\textbf{-Mod}.
    \end{equation}
    Furthermore, by universal property in Definition~\ref{d: universal global partial action} the functor $KG \otimes_{G_{par}}-$ is a left adjoint to the embedding functor $G \textbf{-Mod} \to G_{par}\textbf{-Mod}$ defined in Example~\ref{e: G-modules are Gpar-modules}, i.e., for any $G_{par}$-module $M$ and $G$-module $X$ there exists a natural isomorphism
    \[
        \hom_{G}(KG \otimes_{G_{par}} M, X) \cong \hom_{G_{par}}(M, X).
    \]
\end{remark}

\textbf{Notation.} Let $\alpha: G \curvearrowright M$ be a partial group action on the module $M$. For the sake of simplicity, if there is no ambiguity, a basic element $g \otimes_{G_{par}} m$ of $KG \otimes_{G_{par}} M$ will be denoted by $\lfloor g, m \rfloor$.
Thus, for any $h \in G$, we have that $h \cdot \lfloor g, x \rfloor=\lfloor hg, x \rfloor$.

Let $\alpha: G \curvearrowright X$ be a partial group action on the $K$-module $M$, and let $\theta$ be the natural global action of $G$ on $KG$, we will denote the module $\mathcal{K}_{\theta, \alpha}$ just by $\mathcal{K}_{\alpha}$. From the definition of $\mathcal{K}_{\alpha}$ we know that
\begin{align} \label{eq: kernel globalization partial tensor product}
   \mathcal{K}_{\alpha} 
   &:= \Big\langle g \otimes x - gh^{-1} \otimes \alpha_{h}(x) : g,h \in G, \, x \in X_{h^{-1}} \Big\rangle \\
   &= \Big\langle g \otimes x - h \otimes y : x \in X_{g^{-1}h} \text{ and }\alpha_{h^{-1}g}(x) = y \Big\rangle. \notag
\end{align}

Note that in Definition~\ref{d: universal global partial action}, we do not require $\iota$ to be an injective map. In fact, $\iota$ may not be injective.

\begin{example}
    Let \(\mathcal{C}_2:= \{ 1,t : t^{2} \}\)
    Let $M$ be the free $K$-module with basis consisting of the set symbols $\{ x, y, z, u, v, w \}$, and let $G := \mathcal{C}^{3}$.
    For simplicity, we set $e=(1, 1, 1)$, $a:= (t, 1, 1)$, $b := (1, t, 1)$ and $c := (1, 1, t)$.
    Define:
    \begin{itemize}
        \item $M_e := M$ and $\alpha_e := \operatorname{id}_{M}$;
        \item $M_a := \langle x + y \rangle$, and $\alpha_{a}:= \operatorname{id}_{M_a}$;
        \item $M_{b} := \langle z - u \rangle$, and $\alpha_{b}:= \operatorname{id}_{M_b}$;
        \item $M_{c} := \langle v + w \rangle$, and $\alpha_{c}:= \operatorname{id}_{M_{c}}$;
        \item $M_{abc} := \{ 0 \}$, and $\alpha_{abc}:= \operatorname{id}_{M_{abc}}$;
        \item $M_{ab} := \langle x, u \rangle$, and $\alpha_{ab}: M_{ab} \to M_{ab}$, such that $\alpha_{ab}(x)=u$ and $\alpha_{ab}(u)=x$;
        \item $M_{ac} := \langle v, y \rangle$, and $\alpha_{ac}: M_{ac} \to M_{ac}$, such that $\alpha_{ac}(y)=v$ and $\alpha_{ac}(v)=y$;
        \item $M_{bc} := \langle z, w \rangle$, and $\alpha_{bc}: M_{bc} \to M_{bc}$, such that $\alpha_{bc}(z)=w$ and $\alpha_{bc}(w)=z$.
    \end{itemize}
    It is clear that $\alpha_{g}$ is a $K$-linear isomorphism for all $g \in G$. To verify that $\alpha$ is a partial group action observe that    
    \begin{equation} \label{eq: non-globalizable partial group action formula}
        M_{g} \cap M_{h} =
        \left\{\begin{matrix}
         M_h & \text{ if } g = e \\ 
         M_g = M_h & \text{ if } g = h \\ 
         M_g & \text{ if } h = e \\ 
          \{ 0 \}  & \text{ otherwise.}
        \end{matrix}\right.
    \end{equation}
    Then,     
    \[
        \alpha_g(M_{g^{-1}} \cap M_{g^{-1}h}) =
        \left\{\begin{matrix}
            M_h & \text{ if } g = e \\ 
            M_g = M_h & \text{ if } g = h \\ 
            M_g & \text{ if } h = e \\ 
            \{ 0 \}  & \text{ otherwise,}
        \end{matrix}\right.
    \]
    what verifies $(ii)$ of Definition~\ref{d: partial group action}. Observe that $(iii)$ of Definition~\ref{d: partial group action} is satisfied for $m = 0$. Otherwise, if there exists $m \in M_{g^{-1}} \cap M_{g^{-1}h^{-1}}$ such that $m \neq 0$, then by equation \eqref{eq: non-globalizable partial group action formula} we know that $g^{-1}=e$ or $h^{-1}=e$ or $g^{-1}h^{-1} = e$, then 
    \begin{itemize}
        \item if $g^{-1}=1$, then $m \in M_{h^{-1}}$ and $\alpha_h \alpha_1(m) = \alpha_h(m)$,
        \item if $h^{-1}=1$, then $m \in M_{g^{-1}}$ and $\alpha_1 \alpha_g(m) = \alpha_g(m)$,
        \item if $g^{-1}h^{-1}=1$, then $m \in M_{g^{-1}}$, $g^{-1} = h$ and $\alpha_{g^{-1}} \alpha_g(m) = m = \alpha_1(m)$.
    \end{itemize}
    Therefore, $\alpha$ is a well-defined partial group action of $G$ on $M$. Now define
    \[
       q := x + y + z - u - v - w \in M.
    \]
    It is clear that $q \neq 0$. Note that we can rewrite $q$ as follows
    \begin{align*}
        q
        &= (x + y) + (z - u) - (v + w) \\
        &= \alpha_a(x + y) + \alpha_b(z - u) - \alpha_c(v + w) \\
        &= \alpha_a(x + y) + \alpha_b(z - \alpha_{ab}(x)) - \alpha_c(\alpha_{ac}(y) + \alpha_{bc}(z)).
    \end{align*}
    Then,
    \begin{align*}
        \lfloor 1,q \rfloor
        &= \lfloor 1, \alpha_a(x + y) \rfloor + \lfloor 1, \alpha_b(z - \alpha_{ab}(x)) \rfloor - \lfloor 1, \alpha_c(\alpha_{ac}(y) + \alpha_{bc}(z)) \rfloor \\
        &= \lfloor a, x + y \rfloor + \lfloor b, z - \alpha_{ab}(x) \rfloor - \lfloor c, \alpha_{ac}(y) + \alpha_{bc}(z) \rfloor \\
        &= \lfloor a, x \rfloor + \lfloor a, y \rfloor + \lfloor b, z \rfloor - \lfloor b, \alpha_{ab}(x) \rfloor - \lfloor c, \alpha_{ac}(y) \rfloor - \lfloor c, \alpha_{bc}(z) \rfloor \\
        &= \lfloor a, x \rfloor + \lfloor a, y \rfloor + \lfloor b, z \rfloor - \lfloor bab, x \rfloor - \lfloor cac, y \rfloor - \lfloor cbc, z \rfloor \\
        &= \lfloor a, x \rfloor + \lfloor a, y \rfloor + \lfloor b, z \rfloor - \lfloor a, x \rfloor - \lfloor a, y \rfloor - \lfloor b, z \rfloor \\
        &= 0.
    \end{align*}
    Hence, the map $\iota: M \to KG \otimes_{G_{par}} M$ is not injective.
\end{example}

\begin{proposition}
    Let $\alpha: G \curvearrowright M$ be a partial group action on the $K$-module $M$. Suppose that there exists a global action $\theta: G \curvearrowright W$ and an injective $G$-equivariant map $\psi: M \to W$ such that $\alpha$ is the restriction of $\theta$ to $M$. Then, the map $\iota$ is injective and the canonical global action of $G$ on $KG \otimes_{G_{par}} M$ is a globalization of $\alpha$, we call this globalization as the \textbf{universal globalization}
\end{proposition}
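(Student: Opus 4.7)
The plan is to exploit the universal property of the universal global action established in Theorem~\ref{t: existence of universal global action}. Since $\psi : M \to W$ is a $G$-equivariant map into a global $G$-module, there exists a unique morphism of $G$-modules $\tilde{\psi} : KG \otimes_{G_{par}} M \to W$ with $\tilde{\psi} \circ \iota = \psi$. From this factorization, injectivity of $\psi$ immediately forces $\iota$ to be injective, which settles the first claim and already provides the embedding of $M$ into the candidate globalization.

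Next I would verify that $\Theta$ restricts to $\alpha$ along $\iota$. On the one hand, the $G$-equivariance of $\iota$ observed in Remark~\ref{r: iota universal map} gives $\Theta_g(\iota(m)) = \iota(\alpha_g(m)) \in \iota(M)$ for every $m \in M_{g^{-1}}$, so $\iota(M_g) \subseteq \iota(M) \cap \Theta_g(\iota(M))$, and on this set $\Theta_g$ coincides with $\iota \circ \alpha_g \circ \iota^{-1}$. For the reverse inclusion, suppose $\iota(m) = \Theta_g(\iota(m'))$ for some $m, m' \in M$. Applying $\tilde{\psi}$ and using its $G$-equivariance yields $\psi(m) = \theta_g(\psi(m'))$, so $\psi(m) \in \psi(M) \cap \theta_g(\psi(M)) = \psi(M_g)$, where the last equality uses the hypothesis that $\alpha$ is the restriction of $\theta$ to $\psi(M)$. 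The injectivity of $\psi$ then gives $m \in M_g$, which establishes the inclusion $\iota(M) \cap \Theta_g(\iota(M)) \subseteq \iota(M_g)$ and shows that $\alpha$ is, via $\iota$, isomorphic to the restriction of $\Theta$ to $\iota(M)$.

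Finally, the spanning condition $KG \otimes_{G_{par}} M = \sum_{g \in G} \Theta_g(\iota(M))$ is immediate from the notation introduced right after Theorem~\ref{t: existence of universal global action}: the module is generated as a $K$-module by the elementary tensors $\lfloor g, m \rfloor = g \cdot \lfloor 1, m \rfloor = \Theta_g(\iota(m))$. Combined with the previous two steps, this checks every clause of the definition of a globalization, so $(\Theta, \iota)$ is a globalization of $\alpha$. I expect the only mildly delicate point to be the reverse inclusion in the restriction step; the key idea there is to transport the identity $\iota(m) = \Theta_g(\iota(m'))$ through $\tilde{\psi}$, which reduces the problem to a statement about the already-known embedding $\psi$ into the globally acting module $W$.
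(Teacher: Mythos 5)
Your proposal is correct and follows essentially the same route as the paper's proof: both factor $\psi$ through $\iota$ via the universal property of Theorem~\ref{t: existence of universal global action} to get injectivity of $\iota$, and both transport the identity $\iota(m) = \Theta_g(\iota(m'))$ along the induced map to $W$, where the hypothesis that $\alpha$ is the restriction of $\theta$ yields the inclusion $\iota(M)\cap\Theta_g(\iota(M))\subseteq\iota(M_g)$. The only cosmetic difference is that you also spell out the spanning condition and the forward inclusion explicitly, which the paper leaves implicit.
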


\begin{proof}
    By the universal property of $KG \otimes_{G_{par}}M$ the following diagram is commutative
    \[\begin{tikzcd}
        M & W \\
        {KG \otimes_{G_{par}}M}
        \arrow["\psi", hook, from=1-1, to=1-2]
        \arrow["\iota"', from=1-1, to=2-1]
        \arrow["{\hat{\psi}}"', curve={height=12pt}, dashed, from=2-1, to=1-2]
    \end{tikzcd}\]
    Note that the injectivity of $\psi$ implies the injectivity of $\iota$. Let $\beta:= (G, \iota(M), \{ D_g \}_{g \in G}, \{ \beta_g \}_{g \in G})$ be the restriction of the global action $\Theta: G \curvearrowright KG \otimes_{G_{par}} M$ to $\iota(M)$, since $\iota$ is a $G$-equivaliant map, it is clear that $\iota|_{\iota(M)}: M \to \iota(M)$ is a bijective $G$-equivariant map, thus we only have to verify that $D_g \subseteq \iota(M_g)$ for all $g \in G$. Let $g \in G$  and $m \in M$ such that $\lfloor g, m \rfloor  \in \iota(M)$, then $\theta_{g}(\psi(m)) = \hat{\psi}(\lfloor g, m \rfloor ) \in \psi(M) \cap \theta_{g}(\psi(M))$, whence we conclude that $m \in M_{g^{-1}}$ since $\alpha$ is the restriction of $\theta$, thus $\lfloor g, m \rfloor  = \lfloor 1, \theta_g(m) \rfloor  \in \iota(M_g)$. Therefore, $KG \otimes_{G_{par}} M$ is a globalization of $\alpha: G \curvearrowright M$.
\end{proof}

\begin{corollary}
    Let $\alpha: G \curvearrowright M$ be a partial group action. If the map $\iota: M \to KG \otimes_{G_{par}}M$ is not injective, then $\alpha$ is not globalizable.
\end{corollary}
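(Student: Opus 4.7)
The plan is to prove this corollary as the contrapositive of the preceding proposition. The preceding proposition established that if there is any globalization of $\alpha$ — that is, a global action $\theta \colon G \curvearrowright W$ together with an injective $G$-equivariant map $\psi \colon M \hookrightarrow W$ realizing $\alpha$ as the restriction of $\theta$ to $\psi(M)$ — then the canonical map $\iota \colon M \to KG \otimes_{G_{par}} M$ is automatically injective (indeed, the universal map $\hat{\psi}$ factors $\psi$ through $\iota$, so injectivity of $\psi$ forces injectivity of $\iota$).

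Thus the argument is a one-line deduction: suppose for contradiction that $\alpha$ is globalizable. By the definition of globalization, there exists a global $G$-module $W$ and an injective $G$-equivariant map $\psi \colon M \hookrightarrow W$ such that $\alpha$ is the restriction of the global action on $W$ to $\psi(M)$. The preceding proposition then applies verbatim and yields that $\iota$ is injective, contradicting the hypothesis. Hence $\alpha$ cannot be globalizable.

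There is no real obstacle here; the only subtlety worth flagging is to make sure the hypotheses of the preceding proposition are met by any globalization in the sense of the earlier definition — but this is immediate, since the earlier definition of globalization bundles together exactly the data (injective $G$-equivariant embedding into a global $G$-module, together with the restriction property) that the preceding proposition requires. No further computation or manipulation of $\mathcal{K}_{\alpha}$ is needed.
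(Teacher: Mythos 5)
Your proposal is correct and is exactly the argument the paper intends: the corollary is stated without proof as the immediate contrapositive of the first assertion of the preceding proposition, which shows that any globalization (in particular its injective $G$-equivariant embedding $\psi$) forces $\iota$ to be injective via the factorization $\psi = \hat{\psi}\circ\iota$. Your remark that the definition of globalization supplies precisely the hypotheses of that proposition is the only point that needs checking, and it holds.
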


\begin{proposition} \label{p: universal global action is a globalization iff}
    Let $\alpha: G \curvearrowright M$ be a partial group action. Then, the universal global action $\Theta: G \curvearrowright KG \otimes_{G_{par}}M$ is a globalization if and only if for all $x, y \in M$ and $g \in G$ such that $ \lfloor g, x \rfloor  =  \lfloor 1, y \rfloor $, we have that $x \in M_{g^{-1}}$ and $\alpha_g(x)=y$.
\end{proposition}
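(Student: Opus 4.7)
The plan is to unfold the definition of globalization and reduce it to two conditions on $\iota$: injectivity and the identity $\iota(M_g) = \iota(M) \cap \Theta_g(\iota(M))$ for every $g \in G$. Observe first that the spanning condition $KG \otimes_{G_{par}} M = \sum_{g \in G} \Theta_g(\iota(M))$ is automatic, since every basic element $\lfloor g, m \rfloor$ equals $\Theta_g(\iota(m))$; the $G$-equivariance identity $\iota \circ \alpha_g = \Theta_g \circ \iota$ on $M_{g^{-1}}$ was already checked in Remark~\ref{r: iota universal map}. Hence $\Theta$ is a globalization of $\alpha$ precisely when $\iota$ is injective and $\iota(M_g) = \iota(M) \cap \Theta_g(\iota(M))$ for every $g \in G$, the inclusion $\iota(M_g) \subseteq \iota(M) \cap \Theta_g(\iota(M))$ being automatic by $G$-equivariance.

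For the ($\Leftarrow$) direction I would first specialize the hypothesis to $g = 1$: if $\lfloor 1, x \rfloor = \lfloor 1, y \rfloor$ then $x \in M_1 = M$ and $\alpha_1(x) = y$, giving $x = y$, so $\iota$ is injective. For the restriction identity, take $w \in \iota(M) \cap \Theta_g(\iota(M))$ and write $w = \lfloor 1, y \rfloor = \lfloor g, x \rfloor$ for some $x, y \in M$; the hypothesis then delivers $x \in M_{g^{-1}}$ and $y = \alpha_g(x) \in M_g$, so $w = \iota(y) \in \iota(M_g)$.

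For the ($\Rightarrow$) direction, assume $\Theta$ is a globalization and suppose $\lfloor g, x \rfloor = \lfloor 1, y \rfloor$. Rewriting this as $\iota(y) = \Theta_g(\iota(x))$ places $\iota(y) \in \iota(M) \cap \Theta_g(\iota(M)) = \iota(M_g)$, and injectivity of $\iota$ forces $y \in M_g$. Then $\alpha_{g^{-1}}(y) \in M_{g^{-1}}$ is defined, and applying $\Theta_{g^{-1}}$ to $\iota(y) = \Theta_g(\iota(x))$ together with $G$-equivariance produces $\iota(\alpha_{g^{-1}}(y)) = \iota(x)$. Injectivity of $\iota$ again gives $x = \alpha_{g^{-1}}(y) \in M_{g^{-1}}$, and consequently $\alpha_g(x) = y$.

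The argument is essentially a dictionary between the two formulations, so there is no serious technical obstacle; the only point of care is to invoke injectivity of $\iota$ twice in the ($\Rightarrow$) direction, once to land $y$ in $M_g$ and once to identify $x$ with $\alpha_{g^{-1}}(y)$.
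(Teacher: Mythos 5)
Your proof is correct and follows essentially the same route as the paper's: both reduce the globalization condition to injectivity of $\iota$ together with the identity $\iota(M_g) = \iota(M) \cap \Theta_g(\iota(M))$, and then translate this back and forth into the stated condition on $\lfloor g, x\rfloor = \lfloor 1, y\rfloor$. The only differences are cosmetic (you prove injectivity by specializing to $g=1$ rather than to $y=0$, and in the forward direction you pass through $y \in M_g$ and $\alpha_{g^{-1}}$ instead of concluding $x \in M_{g^{-1}}$ directly).
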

\begin{proof}
    Recall that $\iota: M \to KG \otimes_{G_{par}}M$ is the canonical morphism defined in Remark~\ref{r: iota universal map}.
    Let $\beta:= (G, \iota(M), \{ D_g \}_{g \in G}, \{ \alpha_{g} \}_{g \in G})$ be the restriction of $\Theta$ to $\iota(M)$.
    Set $\hat{\iota}:= \iota|_{\iota(M)}: M \to \iota(M)$.
    Suppose that $\Theta$ is a globalization of $\alpha$, then $\hat{\iota}$ is an isomorphism of partial group actions.
    Let $x, y \in M$ and $g \in G$ such that $ \lfloor g, x \rfloor  =  \lfloor 1, y \rfloor  \in \iota(M)$, since $\beta$ is the restriction of $\Theta$ and $\hat{\iota}$ is an isomorphism we conclude that $x \in M_{g^{-1}}$.
    Then, $ \lfloor 1, y \rfloor  =  \lfloor g, x \rfloor  =  \lfloor 1, \alpha_g(x) \rfloor $, by the injectivity of $\iota$ we obtain that $y = \alpha_g(x)$.
    For the converse we want to verify that $\hat{\iota}$ is an isomorphism of partial actions.
    Let $x \in M$ such that $\iota(x)=0$, then $ \lfloor 1,x \rfloor  = 0 =  \lfloor 1,0 \rfloor $, thus $x = \alpha_1(0)=0$.
    Therefore, the map $\iota$ is injective.
    Hence, to verify that $\hat{\iota}$ is an isomorphism of partial actions we only have to show that $\iota(M_g) = D_g$.
    Let $ \lfloor g, x \rfloor  \in D_g \subseteq \iota(M)$, since $ \lfloor g, x \rfloor  \in \iota(M)$ there exists $z \in M$ such that $ \lfloor g, x \rfloor  =  \lfloor 1, z \rfloor $, thus by the hypotheses we get that $x \in M_{g^{-1}}$, and therefore $ \lfloor g, x \rfloor  =  \lfloor 1, \alpha_g(x) \rfloor  = \iota(\alpha_g(x))$.
    Hence, $\iota(M_g)= D_g$.
\end{proof}

\begin{remark} \label{r: tau map for partial representations}
    For any $K_{par}G$-module we can define a $K$-linear map $\tau_0: KG \otimes X \to X$ such that $\tau_0(g \otimes x) := [g] \cdot x$, note that $g \otimes [h] \cdot x - gh \otimes e_{h^{-1}} \cdot x \in \ker \tau_0$ for all $x \in X$ and $g, h \in G$. Then, by the universal property of the partial tensor product (Proposition~\ref{p: universal property of partial tensor product}) there exists a $K$-linear map $\tau: KG \otimes_{G_{par}} X \to X$ such that $\tau(\lfloor g, x \rfloor) = [g] \cdot x$. Furthermore, note that $\tau \circ \iota = 1_{X}$.
\end{remark}

\begin{theorem} \label{t: globalization of partial representations}
    Let $\pi: G \to \operatorname{End}_{K}(M)$ be a partial group representation, and let $\alpha: G \curvearrowright M$ be the partial group action induced by $\pi$. Then, $\Theta: G \curvearrowright KG \otimes_{G_{par}}M$ the universal globalization of $\alpha$.
\end{theorem}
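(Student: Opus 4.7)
The plan is to apply the criterion from Proposition~\ref{p: universal global action is a globalization iff}: it suffices to show that whenever $\lfloor g, x \rfloor = \lfloor 1, y \rfloor$ in $KG \otimes_{G_{par}} M$ with $x, y \in M$ and $g \in G$, we have $x \in M_{g^{-1}}$ and $\alpha_g(x) = y$. The central tool is the retraction $\tau : KG \otimes_{G_{par}} M \to M$ from Remark~\ref{r: tau map for partial representations}, which is only available because $M$ is a genuine partial representation, and satisfies $\tau(\lfloor h, m \rfloor) = [h] \cdot m$ and $\tau \circ \iota = 1_M$.

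First I would fix an equality $\lfloor g, x \rfloor = \lfloor 1, y \rfloor$ and apply $\tau$ directly, which yields $[g] \cdot x = y$. This alone is not enough: it gives the candidate value $y = \alpha_g(x)$ only once we know $x \in M_{g^{-1}}$. To extract the membership condition, I would exploit the $G$-equivariance of $\Theta$ on $KG \otimes_{G_{par}} M$: acting by $g^{-1}$ on both sides of the assumed equality gives
\[
\lfloor 1, x \rfloor \;=\; g^{-1} \cdot \lfloor g, x \rfloor \;=\; g^{-1} \cdot \lfloor 1, y \rfloor \;=\; \lfloor g^{-1}, y \rfloor .
\]
Applying $\tau$ to this second equality, using $\tau \circ \iota = 1_M$ on the left, yields $x = [g^{-1}] \cdot y$.

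Combining the two identities, $x = [g^{-1}] \cdot y = [g^{-1}][g] \cdot x = e_{g^{-1}} \cdot x$, which by definition of the induced partial action (and Remark~\ref{r: domain of partial action induced by a partial representation}) exactly says $x \in M_{g^{-1}}$. Then $\alpha_g(x) = [g] \cdot x = y$ follows from the first application of $\tau$, completing the verification of the criterion.

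There is no real obstacle here once one has $\tau$: the only subtle point is realizing that a single application of $\tau$ gives only the value of $y$ and not the membership $x \in M_{g^{-1}}$, so one must translate the equation by $g^{-1}$ and apply $\tau$ a second time. The existence of such a global section $\tau$ of $\iota$ is precisely what distinguishes partial representations from arbitrary module-theoretic partial actions (see Example~\ref{e: globalization is not unique} and the subsequent non-injective example), and this is exactly why globalizations in this setting are well behaved.
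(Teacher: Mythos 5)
Your proof is correct and follows essentially the same route as the paper: both verify the criterion of Proposition~\ref{p: universal global action is a globalization iff} by applying the retraction $\tau$ of Remark~\ref{r: tau map for partial representations} twice to deduce $x = e_{g^{-1}} \cdot x$, hence $x \in M_{g^{-1}}$ and $\alpha_g(x) = [g]\cdot x = y$. The only cosmetic difference is that you translate by $g^{-1}$ before the second application of $\tau$, whereas the paper first rewrites $\lfloor g, x \rfloor = \lfloor g, e_{g^{-1}}\cdot x \rfloor$; the content is identical.
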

\begin{proof}
    Let $g \in G$, $x, y \in M$ such that $[g, x] = [1, z]$. By Remark~\ref{r: tau map for partial representations}, $[g] \cdot x = \tau([g, x]) = \tau([1,z])= z$.
    Therefore, $[g, x] = [1, [g] \cdot x] = [g, e_{g^{-1}} \cdot x]$. Thus, $[1, x] = [1, e_{g^{-1}} \cdot x]$. Applying $\tau$ again we obtain $x = e_{g^{-1}} \cdot x$. Then, $x \in M_{g^{-1}}$ and $\alpha_g(x)= [g] \cdot x = z$. Hence, by Proposition~\ref{p: universal global action is a globalization iff}, $\Theta$ is a globalization of $\alpha: G \curvearrowright M$.
\end{proof}

Note that the fact that a partial group action is determined by a partial representation does not imply uniqueness of globalization. In fact, the partial action defined in Example~\ref{e: globalization is not unique} arises from a partial representation.

\begin{remark}[\textbf{Dilation of partial representations}] \label{r: dilation of partial representations}
    Let \(\pi: G \to \operatorname{End}_{K}(M)\) be a partial group representation.  
    Observe that if we define \(T: KG \otimes_{G_{par}} M \to KG \otimes_{G_{par}} M\) by \(T := \iota \circ \tau\), then \((KG \otimes_{G_{par}} M, T, \iota)\) is a dilation of \((M, \pi)\) in the sense of \cite[Definition 4.1]{ABV2}.  
    Furthermore, by Theorem~\ref{t: globalization of partial representations}, \(KG \otimes_{G_{par}} M\) is a universal globalization. Hence, by \cite[Proposition 4.7]{ABV2}, we conclude that \(KG \otimes_{G_{par}} M\) naturally isomorphic to the standard dilation of \(M\).  
\end{remark}

\begin{example}
    We will see in Lemma~\ref{l: KG o KparG left isormorphism} and Remark~\ref{r: K otimes KparG is B} that the universal globalization of the partial group algebra $K_{par}G$ is isomorphic as $KG$-$K_{par}G$-bimodule to $KG \otimes \mathcal{B}$, where $KG \otimes \mathcal{B}$ is a $KG$-$K_{par}G$-module such that
    \[
        g \cdot (h \otimes w) = gh \otimes w \text{ and } (h \otimes w )\cdot [g] = hg \otimes (w \triangleleft [g]).
    \]
    Furthermore, $KG \otimes \mathcal{B}$ has a structure of algebra, with the product given by
    \[
        (g \otimes w) * (h \otimes u) := gh \otimes (w \triangleleft [h]) u.
    \]
    One can verify that $ \lfloor 1, 1 \rfloor  * KG \otimes \mathcal{B} \cong \mathcal{B} \rtimes G \cong K_{par}G$.
\end{example}

\section{A globalization problem for partial group (co)homology}

Recall that in Example~\ref{e: partial action induced by a partial representation} we obtained a functor
\[
   \Upsilon: K_{par}G \textbf{-Mod} \to G_{par}\textbf{-Mod}.
\]
Thus, by Theorem~\ref{t: globalization of partial representations}, when we compose $\Upsilon$ with the functor
\[
   KG \otimes_{G_{par}} - : G_{par}\textbf{-Mod} \to G \textbf{-Mod},
\]
(as outlined in Remark~\ref{r: partial tensor product for partial representations}), we obtain a functor
\[
    \Lambda:= (KG \otimes_{G_{par}} - ) \circ \Upsilon : K_{par}G \textbf{-Mod} \to G \textbf{-Mod}.
\]
that maps a $K_{par}G$-module $M$ to the universal globalization $\Theta: G \curvearrowright KG \otimes_{G_{par}} M$ of the $G_{par}$-module $M$. We refer to this functor as the \textbf{globalization functor}. \index{Globalization functor}

\begin{lemma} \label{l: zeros of K charaterization}
   Let $X$ be a left $K_{par}G$-module, suppose that $\{ z_i \}_{i = 0}^{n} \subseteq X$ and $\{ g_i \}_{i = 1}^{n} \subseteq G$ are such that
   \[
      1 \otimes_{G_{par}} z_0 + \sum_{i = 1}^{n} g_i \otimes_{G_{par}} z_i = 0.
   \]
   Then, $z_0 = - \sum_{i=1}^{m} [g_i] \cdot z_i$.
\end{lemma}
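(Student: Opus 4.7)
The plan is to apply the map $\tau: KG \otimes_{G_{par}} X \to X$ from Remark~\ref{r: tau map for partial representations}, which is a $K$-linear map satisfying $\tau(\lfloor g, x \rfloor) = [g] \cdot x$, to both sides of the hypothesized equation. Since $\tau$ is well-defined on the partial tensor product (via the universal property in Proposition~\ref{p: universal property of partial tensor product}), applying it to
\[
    1 \otimes_{G_{par}} z_0 + \sum_{i=1}^{n} g_i \otimes_{G_{par}} z_i = 0
\]
yields $[1] \cdot z_0 + \sum_{i=1}^{n} [g_i] \cdot z_i = 0$ in $X$.

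Since $\pi(1_G) = 1$ in any partial representation (axiom (c) of Definition~\ref{d: partial representation}), we have $[1] \cdot z_0 = z_0$, so the displayed equation immediately rearranges to $z_0 = -\sum_{i=1}^{n} [g_i] \cdot z_i$, which is the desired conclusion.

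There is essentially no obstacle here: the whole proof is a one-line consequence of the existence and explicit formula for $\tau$, which was already established. The only thing to double-check is that the summation index in the conclusion (written with an upper limit $m$) matches the hypothesis (upper limit $n$); this appears to be a typo in the statement, and the argument as above produces the sum up to $n$.
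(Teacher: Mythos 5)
Your proposal is correct and is essentially identical to the paper's own proof: both apply the map $\tau$ from Remark~\ref{r: tau map for partial representations} to the hypothesized relation and use $\tau(\lfloor g, x\rfloor)=[g]\cdot x$ together with $[1]\cdot z_0=z_0$ to rearrange. Your observation that the upper limit $m$ in the conclusion should read $n$ is also accurate; it is a typo in the statement.
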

\begin{proof}
   Note that, by Remark~\ref{r: tau map for partial representations}, we have
   \[
       0 = \tau \Big(1 \otimes_{G_{par}} z_0 + \sum_{i = 1}^{n} g_i \otimes_{G_{par}} z_i \Big) = z_0 + \sum_{i = 1}^{n} [g_i] \cdot z_i,
   \]
   whence we obtain the desired conclusion.
\end{proof}

\begin{theorem} \label{t: the globalization is exact}
    The globalization functor $\Lambda : K_{par}G\textbf{-Mod} \to G\textbf{-Mod}$ is exact.
\end{theorem}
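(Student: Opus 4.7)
The functor $\Lambda = (KG \otimes_{G_{par}} -) \circ \Upsilon$ is right exact by Proposition~\ref{p: partial tensor product is right exact}, so the content is to show that $\Lambda$ preserves injections. Let $f \colon M \hookrightarrow N$ be an injective morphism in $K_{par}G\textbf{-Mod}$ and suppose $\xi \in \Lambda(M)$ satisfies $\Lambda(f)(\xi) = 0$; after collecting terms we may write $\xi = \sum_{i=0}^{n} g_i \otimes_{G_{par}} m_i$ with the $g_i$ pairwise distinct. The plan is to combine Lemma~\ref{l: zeros of K charaterization} with the injectivity of $f$ to iteratively reduce $\xi$ to a single summand, and then apply the lemma one last time to conclude that the remaining coefficient vanishes.

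For the reduction step I exploit that $\Lambda(N)$ is a $KG$-module and $\Lambda(f)$ is $KG$-linear: acting by $g_0^{-1}$ on $\Lambda(f)(\xi) = 0$ gives
\[
    1 \otimes_{G_{par}} f(m_0) + \sum_{i \geq 1} g_0^{-1} g_i \otimes_{G_{par}} f(m_i) = 0 \qquad \text{in } \Lambda(N).
\]
Lemma~\ref{l: zeros of K charaterization} then yields $f(m_0) = -\sum_{i \geq 1} [g_0^{-1} g_i] f(m_i) = f\!\bigl(-\sum_{i \geq 1} [g_0^{-1} g_i] m_i\bigr)$, and the injectivity of $f$ promotes this to the identity $m_0 = -\sum_{i \geq 1} [g_0^{-1} g_i] m_i$ inside $M$ itself. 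Substituting back into $g_0 \otimes_{G_{par}} m_0$ and applying the defining relation $h \otimes_{G_{par}} [k] m = hk \otimes_{G_{par}} e_{k^{-1}} m$ of the partial tensor product, I rewrite
\[
    \xi = \sum_{i=1}^{n} g_i \otimes_{G_{par}} (1 - e_{g_i^{-1} g_0}) m_i \qquad \text{in } \Lambda(M),
\]
an expression with one fewer distinct $g$-index.

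Iterating this procedure with $g_1, g_2, \ldots$ as successive pivots, after $n$ applications I arrive at $\xi = g_n \otimes_{G_{par}} m_n^{(n)}$ for some $m_n^{(n)} \in M$. A final translation by $g_n^{-1}$ produces $1 \otimes_{G_{par}} f(m_n^{(n)}) = 0$ in $\Lambda(N)$; Lemma~\ref{l: zeros of K charaterization} with empty tail combined with injectivity of $f$ then forces $m_n^{(n)} = 0$, whence $\xi = 0$ and $\Lambda(f)$ is injective. The main subtlety I anticipate is the inductive bookkeeping: at each step one must verify that the identity produced by the lemma genuinely lifts from $N$ to $M$ (the essential use of injectivity of $f$), that the single relation $h \otimes_{G_{par}} [k] m = hk \otimes_{G_{par}} e_{k^{-1}} m$ suffices to carry out the rewrite in $\Lambda(M)$, and that pairwise distinctness of the surviving $g$-indices is preserved from one step to the next. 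Each individual verification is routine, but together they form the technical core of the argument.
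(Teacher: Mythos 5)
Your proposal is correct and follows essentially the same argument as the paper: reduce to showing $\Lambda$ preserves injections, then peel off one summand at a time by translating a pivot to the identity coset, applying Lemma~\ref{l: zeros of K charaterization} together with injectivity of $f$ to solve for that coefficient inside $M$, and rewriting via the relation $h \otimes_{G_{par}} [k]\cdot m = hk \otimes_{G_{par}} e_{k^{-1}}\cdot m$. The paper organizes the identical reduction as an induction on the number of summands (using the last index rather than the first as pivot), so the two arguments coincide in substance.
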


\begin{proof}
    By Proposition~\ref{p: partial tensor product is right exact} we know that $\Lambda$ is right exact, thus we only have to prove that $\Lambda$ preserve injective morphisms. 
    Let $X$ and $Y$ be left $K_{par}G$-modules, $\psi: X \to Y$ an injective map of $K_{par}G$-modules, and let $\psi_*: \Lambda(X) \to \Lambda(Y)$ be the morphism of $G$-modules determined by $\psi$.
    It is worth to note that $\psi_*$ is given by $\psi_*( \lfloor g, x \rfloor )=  \lfloor g, \psi(x) \rfloor $.
    Recall that for any $z \in KG \otimes_{G_{par}} X$ there exists $\{ g_i \}_{i=0}^{n} \subseteq G$ and $\{ x_i \}_{i=0}^{n} \subseteq X$ such that $z = \sum_{i = 0}^{n} \lfloor g_i, x_i \rfloor $. Then, the injectivity of $\psi_*$ is equivalent to:
    \begin{enumerate}[($\flat$)]
        \item For all $n \in \mathbb{N}$, $\{ g_i \}_{i=0}^{n} \subseteq G$ and $\{ x_i \}_{i=0}^{n} \subseteq X$ such that $\sum_{i = 0}^{n}  \lfloor g_i, \psi(x_i) \rfloor  = 0$ we have that $\sum_{i = 0}^{n}  \lfloor g_i, x_i \rfloor  = 0$.
    \end{enumerate}
    We proceed  to prove $(\flat)$ with an induction argument over $n \in \mathbb{N}$. For $n = 0$, note that if there exists $x \in X$ and $g \in G$ such that $ \lfloor g, \psi(x) \rfloor  = 0$, then $ \lfloor 1, \psi(x) \rfloor  = 0$. Since $\iota: Y \to KG \otimes_{G_{par}} Y$ is injective, we have $\psi(x) = 0$, consequently $x = 0$ and $ \lfloor g, x \rfloor  = 0$, this give us the base of the induction. Let $n \in \mathbb{N}$, suppose that for all  $\{ h_i \}_{i = 0}^{n} \subseteq G$ and $\{ y_i \}_{i=0}^{n} \subseteq X$ such that $\psi_* \left(\sum_{i = 0}^{n}  \lfloor h_i, y_i \rfloor  \right) = \sum_{i = 0}^{n}  \lfloor h_i, \psi(y_i) \rfloor  = 0$ we have that $\sum_{i = 0}^{n}  \lfloor h_i, y_i \rfloor  = 0$. Let $z = \sum_{i = 0}^{n+1}  \lfloor g_i, x_i \rfloor $ be such that $\psi_*(z)=0$, then
   \[
       0 = g_{n+1}^{-1} \cdot \psi_*(z) =  \lfloor 1, \psi(x_{n+1}) \rfloor  + \sum_{i = 0}^{n}  \lfloor g_{n+1}^{-1}g_i, \psi(x_i) \rfloor .
   \]
   Then, by Lemma~\ref{l: zeros of K charaterization}  we have that
   \[
       \psi(x_{n+1}) = -\sum_{i=0}^{n} [g_{n+1}^{-1} g_i] \cdot \psi(x_i) = \psi\left( -\sum_{i=0}^{n} [g_{n+1}^{-1} g_i] \cdot x_i \right).
   \]
    Since $\psi$ is injective we conclude that $x_{n+1} = - \sum_{i = 0}^{n} [g_{n+1}^{-1}g_i] \cdot x_i$. Therefore, 
    \begin{align*}
         \lfloor 1, x_{n+1} \rfloor  
        &= - \sum_{i = 0}^{n} \, \lfloor 1, \,  [g_{n+1}^{-1}g_i ] \cdot x_i  \rfloor  \\
        &= - \sum_{i = 0}^{n} \, \lfloor  g_{n+1}^{-1}g_i, \, e_{g_i^{-1}g_{n+1}} \cdot x_i \rfloor .
    \end{align*}
    Then,
   \[
       g_{n+1}^{-1} \cdot z 
       =  \lfloor 1, x_{n+1} \rfloor  + \sum_{i = 0}^{n} \, \big \lfloor g_{n+1}^{-1}g_i, \, x_i \big \rfloor  
       = \sum_{i = 0}^{n}\Big \lfloor g_{n+1}^{-1}g_i, \, x_i - e_{g_i^{-1}g_{n+1}} \cdot x_i \Big \rfloor .
   \]
    Observe that $\psi_*(g_{n+1}^{-1} \cdot z) = g_{n+1}^{-1} \cdot \psi_*(z) = 0$, then by the induction hypotheses we have that $g_{n+1}^{-1} \cdot z = 0$, which implies $z = 0$. Therefore, $\psi_*$ is injective.
\end{proof}

Recall that by definition \(\Lambda(K_{par}G)\) is the \(KG\)-module \(KG \otimes_{G_{par}} K_{par}G\), where \(K_{par}G\) possesses its natural left \(G_{par}\)-module structure given by \(\Upsilon\).

\begin{corollary} \label{c: the globalizer module is KparG flat}
    $KG \otimes_{G_{par}} K_{par}G$ is flat as right $K_{par}G$-module.
\end{corollary}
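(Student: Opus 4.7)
The plan is to identify the functor $(KG \otimes_{G_{par}} K_{par}G) \otimes_{K_{par}G} -$ with the globalization functor $\Lambda$, and then invoke Theorem~\ref{t: the globalization is exact}.

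First I would apply Proposition~\ref{p: partial tensor product is associative} with $X = KG$ regarded as a right $G_{par}$-module via its natural global $G$-action, with $Y = K_{par}G$ viewed as a $K_{par}G$-$K_{par}G$-bimodule, and with $\mathcal{A} = K_{par}G$. For any left $K_{par}G$-module $M$ this yields a natural isomorphism
\[
    (KG \otimes_{G_{par}} K_{par}G) \otimes_{K_{par}G} M \;\cong\; KG \otimes_{G_{par}} (K_{par}G \otimes_{K_{par}G} M) \;\cong\; KG \otimes_{G_{par}} M,
\]
where the second isomorphism is the standard identification $K_{par}G \otimes_{K_{par}G} M \cong M$ of left $K_{par}G$-modules. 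By the definition $\Lambda := (KG \otimes_{G_{par}} -) \circ \Upsilon$, the right-hand side is exactly $\Lambda(M)$ (forgetting its $G$-module structure).

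Therefore, the functor
\[
    (KG \otimes_{G_{par}} K_{par}G) \otimes_{K_{par}G} - : K_{par}G\textbf{-Mod} \to K\textbf{-Mod}
\]
is naturally isomorphic to $\Lambda$. Since $\Lambda$ is exact by Theorem~\ref{t: the globalization is exact}, so is this tensor functor, which is precisely the assertion that $KG \otimes_{G_{par}} K_{par}G$ is flat as a right $K_{par}G$-module.

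There is no serious obstacle; the only point to be careful about is the bookkeeping of structures. The right $K_{par}G$-action on $KG \otimes_{G_{par}} K_{par}G$ used to form the tensor product is the one inherited from the right regular $K_{par}G$-module structure on $K_{par}G$, as guaranteed by Proposition~\ref{p: partial tensor product A-R bimodule}. Moreover, the left $G_{par}$-structure on $K_{par}G$ used when forming $KG \otimes_{G_{par}} K_{par}G$ coincides with the one given by $\Upsilon$ applied to $K_{par}G$ as a left $K_{par}G$-module, directly from Example~\ref{e: partial action induced by a partial representation}, so the two interpretations of $\Lambda(K_{par}G)$ agree.
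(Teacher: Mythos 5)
Your proposal is correct and follows essentially the same route as the paper: both use Proposition~\ref{p: partial tensor product is associative} to identify $(KG \otimes_{G_{par}} K_{par}G) \otimes_{K_{par}G} -$ with $KG \otimes_{G_{par}} -$ and then invoke the exactness of the globalization functor from Theorem~\ref{t: the globalization is exact}. Your additional remarks on the bookkeeping of module structures are accurate but not needed beyond what the paper already records.
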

\begin{proof}
    Let $0 \to A \to B \to C \to 0$ be an exact sequence of left $K_{par}G$-modules.
    By Proposition~\ref{p: partial tensor product is associative}, the complex 
    \[
        0 \to (KG \otimes_{G_{par}} K_{par}G) \otimes_{K_{par}G}A \to (KG \otimes_{G_{par}}  K_{par}G) \otimes_{K_{par}G} B \to (KG \otimes_{G_{par}}  K_{par}G) \otimes_{K_{par}G} C \to 0
    \]
    is isomorphic to
    \[
        0 \to KG \otimes_{G_{par}} A \to KG \otimes_{G_{par}} B \to KG \otimes_{G_{par}} C \to 0,
    \]
    and, by Theorem~\ref{t: the globalization is exact}, this complex is exact.
\end{proof}

\begin{remark}[\textbf{Exactness of dilation functor}]
    By Remark~\ref{r: dilation of partial representations}, the dilation functor in \cite[Proposition 4.5]{ABV2} coincides with the standard dilation functor. Therefore, by Theorem~\ref{t: the globalization is exact}, the standard dilation functor is exact, making \cite[Proposition 5.2]{ABV2} applicable to group algebras (see \cite[Remark 5.3]{ABV2}).
    Note that Theorem~\ref{t: the globalization is exact} can also be derived from \cite[Proposition 5.2]{ABV2} and Proposition~\ref{p: partial tensor product is right exact}. However, for the sake of completeness, we have provided an alternative proof.
\end{remark}

\subsection{Partial group homology}

In this section we prove that the partial group homology of a $K_{par}G$-module $M$ coincides with the group homology of the universal globalization $\Theta: G \curvearrowright KG \otimes_{G_{par}}M$.

\begin{lemma} \label{l: KG o KparG left isormorphism}
    The modules $KG \otimes_{G_{par}} K_{par}G$ and $KG \otimes \mathcal{B}$ are isomorphic as left $KG$-modules. In particular, $KG \otimes_{G_{par}} K_{par}G$ is free as left $KG$-module. 
\end{lemma}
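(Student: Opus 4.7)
The plan is to build mutually inverse left $KG$-linear maps $\psi: KG \otimes \mathcal{B} \leftrightarrows KG \otimes_{G_{par}} K_{par}G : \phi$. The map $\psi(g \otimes w) = \lfloor g, w \rfloor$ comes from the inclusion $\mathcal{B} \hookrightarrow K_{par}G$ and is visibly $KG$-linear, so all the work goes into showing it is bijective.

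For surjectivity, Proposition~\ref{p: basic decomposition of elements of SG} reduces the question to generators of the form $\lfloor g, w[h] \rfloor$ with $w \in \mathcal{B}$ and $g,h \in G$. Using the identities $e_h[h] = [h]$ and $[g]e_k = e_{gk}[g]$ of Proposition~\ref{p: computations rules}, one checks that $w[h] = [h] \cdot ([h^{-1}]w[h])$, where $[h^{-1}]w[h] \in \mathcal{B}$ since $[h^{-1}]$ can be pushed past each $e_{g_i}$ factor inside $w$ and finally absorbed via $[h^{-1}][h] = e_{h^{-1}}$. The defining partial-tensor relation then collapses
\[
\lfloor g, w[h] \rfloor \;=\; \lfloor g,\, [h]\cdot([h^{-1}]w[h])\rfloor \;=\; \lfloor gh,\, [h^{-1}]w[h] \rfloor,
\]
which lies in the image of $\psi$.

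For injectivity, I would construct $\phi$ explicitly. Using the $K$-module identification $K_{par}G \cong \mathcal{B} \otimes KG$ given by $w \otimes h \mapsto w[h]$ (a consequence of Proposition~\ref{p: basic decomposition of elements of SG}), define
\[
\phi_0 : KG \otimes_K K_{par}G \to KG \otimes \mathcal{B}, \qquad \phi_0(g \otimes w[h]) = gh \otimes [h^{-1}]w[h].
\]
The hard part is checking that $\phi_0$ factors through the partial tensor product, i.e.\ that $\phi_0(gs \otimes e_{s^{-1}} z) = \phi_0(g \otimes [s]z)$ for all $g,s \in G$ and $z = u[h] \in K_{par}G$. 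Both sides unwind by applying, in order, $[s]u = \bar u\,[s]$ with $\bar u \in \mathcal{B}$ (pushing $[s]$ through $u$ via $[s]e_k = e_{sk}[s]$), the identity $[s][h] = e_s[sh]$, and finally $[h^{-1}s^{-1}][s][h] = e_{h^{-1}s^{-1}} e_{h^{-1}}$. A bookkeeping computation shows that both sides reduce to the same product of idempotents in $\mathcal{B}$ (when $u = \prod_i e_{g_i}$, both become $(\prod_i e_{h^{-1}g_i})\, e_{h^{-1}s^{-1}}\, e_{h^{-1}}$). This verification is the main technical obstacle and is purely mechanical given Proposition~\ref{p: computations rules}.

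With $\phi$ in hand, $\phi \circ \psi = \operatorname{id}$ is immediate from $[1] = 1$, while $\psi \circ \phi = \operatorname{id}$ is exactly the identity $\lfloor g, w[h] \rfloor = \lfloor gh, [h^{-1}]w[h] \rfloor$ established in the surjectivity step. For the ``in particular'' claim, recall that $\mathcal{B}$ is free over $K$ with basis $E(\mathcal{S}(G))$; consequently $KG \otimes \mathcal{B}$ is free over $KG$ with basis $\{1 \otimes u : u \in E(\mathcal{S}(G))\}$, and the $KG$-linear isomorphism transports this to a $KG$-basis of $KG \otimes_{G_{par}} K_{par}G$.
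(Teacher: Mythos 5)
Your proof is correct and takes essentially the same route as the paper: the paper likewise builds the pair of mutually inverse maps $f'(g\otimes u)=g\otimes_{G_{par}}u$ and $f(g\otimes_{G_{par}}[h]u)=gh\otimes e_{h^{-1}}u$ (your $\psi$ and $\phi$, written with the idempotent factor on the other side of $[h]$), with the only real work being the verification that the latter respects the partial-tensor relations. The differences are purely organizational.
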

\begin{proof}
    Recall that $K_{par}G$ is the $K$-algebra generated by the Exel's semigroup $\mathcal{S}(G)$ of $G$ and by Proposition~\ref{p: basic decomposition of elements of SG} any element $z \in \mathcal{S}(G)$ has a unique representation $z = [g]u$, where $g \in G$ and $u$ is an idempotent element of $\mathcal{S}(G)$.
    Thus, we can define a $K$-linear map $f_0: KG \otimes K_{par}G \to KG \otimes \mathcal{B}$ such that $f_0(g \otimes [h]u) = gh \otimes e_{h^{-1}}u$.
    Moreover, note that by the definition of $f_0$ we have that $f_0(gh \otimes e_{h^{-1}}u) = f_0(g \otimes [h] u)$.
    Therefore, by Proposition~\ref{p: universal property of partial tensor product} there exists a $K$-linear map $f: KG \otimes_{G_{par}} K_{par}G \to KG \otimes \mathcal{B}$ such that $f(g \otimes_{G_{par}} [h]u)=gh \otimes e_{h^{-1}}u$, it is clear that this function is also a morphism of $G$-modules.
    Conversely, recall that the idempotent elements of $\mathcal{S}(G)$ form a $K$-basis for $\mathcal{B}$ as $K$-module.
    Thus, there exists a $K$-linear map $f': KG \otimes \mathcal{B} \to KG \otimes_{G_{par}} K_{par}G$ such that $f'(g \otimes u) = g \otimes_{G_{par}} u$. It is easy to see that $f \circ f' = 1_{KG \otimes \mathcal{B}}$, on the other hand observe that
    \[
        f' \circ f (g \otimes_{G_{par}} [h]u) = f'(gh \otimes e_{h^{-1}}u) = gh \otimes_{G_{par}} e_{h^{-1}} u = g \otimes_{G_{par}} [h]u.    
    \]
    Thus, $f'$ and $f$ are mutually inverses.
\end{proof}

\begin{remark} \label{r: K otimes KparG is B}
    Consider the isomorphism of left $KG$-modules $f: KG \otimes_{G_{par}} K_{par}G \to KG \otimes \mathcal{B}$ defined in the proof of Lemma~\ref{l: KG o KparG left isormorphism}. Note that the right $K_{par}G$-module structure on $KG \otimes \mathcal{B}$ induced by $f$ is given by:
    \[
        (g \otimes w) \cdot [h] = gh \otimes (w \triangleleft [h]),
    \]
    with that $K_{par}G$-module structure on $KG \otimes \mathcal{B}$ in mind and considering $K$ a trivial $KG$-module we obtain the following isomorphisms of right $K_{par}G$-modules
    \[
        K \otimes_{G_{par}} K_{par}G \cong K \otimes_{KG} KG \otimes_{G_{par}} K_{par}G \overset{f_*}{\cong} K \otimes_{KG} KG \otimes \mathcal{B} \cong K \otimes \mathcal{B} \cong \mathcal{B}.
    \]
    Explicitly, this is isomorphism is given by $r \otimes_{G_{par}} z \mapsto r \varepsilon(z)$ for all $r \in K$ and $z \in K_{par}G$. Let $\hat{\varepsilon}: KG \to K$ be such that $\hat{\varepsilon}(g) = 1$, then applying the functor
    \[
       - \otimes_{KG} KG \otimes_{G_{par}} K_{par}G : \textbf{Mod-}G \to K\textbf{-Mod}
    \]
    to $\hat{\varepsilon}$ we obtain a surjective morphism of right $K_{par}G$-modules
    \[
       \varepsilon': KG \otimes_{G_{par}} K_{par}G \to  \mathcal{B}
    \]
    such that $\varepsilon'([g, z]) = \varepsilon(z)$.
\end{remark}

\begin{theorem} \label{t: partial group homology is global group homology}
    Let $\pi: G \to \operatorname{End}_{K}(M)$ be a partial group representation. Then,
    \[
        H_{\bullet}^{par}(G, M) \cong H_{\bullet}(G, \Lambda(M)).
    \]
\end{theorem}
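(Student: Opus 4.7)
The strategy is to build a single flat resolution of $\mathcal{B}$ over $K_{par}G$ starting from a free resolution of $K$ over $KG$, and then invoke associativity of the partial tensor product to identify the two $\operatorname{Tor}$ groups. Let $P_\bullet \to K$ be any free resolution of the trivial right $KG$-module $K$ (for example, the standard bar resolution, where each $P_n = KG^{\otimes n+1}$ is $KG$-free on $G^n$). I will show that
\[
  P_\bullet \otimes_{KG} \Lambda(K_{par}G) \; \longrightarrow \; \mathcal{B}
\]
is a flat resolution of $\mathcal{B}$ as a right $K_{par}G$-module.

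First I will check exactness. By Lemma~\ref{l: KG o KparG left isormorphism}, $\Lambda(K_{par}G) = KG \otimes_{G_{par}} K_{par}G$ is isomorphic as a left $KG$-module to $KG \otimes \mathcal{B}$, which is free (and in particular flat) as a left $KG$-module, since the idempotents of $\mathcal{S}(G)$ form a $K$-basis of $\mathcal{B}$. Hence the functor $-\otimes_{KG} \Lambda(K_{par}G)$ applied to the resolution $P_\bullet \to K$ remains exact, giving a resolution of $K \otimes_{KG} \Lambda(K_{par}G)$. By Remark~\ref{r: K otimes KparG is B}, this augmentation is naturally isomorphic to $\mathcal{B}$ as a right $K_{par}G$-module (via $\varepsilon'$). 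Next I will check flatness of each term as a right $K_{par}G$-module. Since $P_n$ is $KG$-free, $P_n \otimes_{KG} \Lambda(K_{par}G)$ is a direct sum of copies of $\Lambda(K_{par}G)$, and by Corollary~\ref{c: the globalizer module is KparG flat} the module $\Lambda(K_{par}G)$ is flat as a right $K_{par}G$-module; so each term is flat.

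With this flat resolution in hand, I compute:
\[
  H_\bullet^{par}(G, M) \;=\; \operatorname{Tor}_\bullet^{K_{par}G}(\mathcal{B}, M) \;\cong\; H_\bullet\bigl( P_\bullet \otimes_{KG} \Lambda(K_{par}G) \otimes_{K_{par}G} M \bigr).
\]
Now I apply associativity of the partial tensor product (Proposition~\ref{p: partial tensor product is associative}) to the inner two factors:
\[
  \Lambda(K_{par}G) \otimes_{K_{par}G} M \;=\; \bigl(KG \otimes_{G_{par}} K_{par}G\bigr) \otimes_{K_{par}G} M \;\cong\; KG \otimes_{G_{par}} \bigl(K_{par}G \otimes_{K_{par}G} M\bigr) \;\cong\; KG \otimes_{G_{par}} M \;=\; \Lambda(M).
\]
Substituting back yields
\[
  H_\bullet^{par}(G, M) \;\cong\; H_\bullet\bigl( P_\bullet \otimes_{KG} \Lambda(M) \bigr) \;=\; \operatorname{Tor}_\bullet^{KG}(K, \Lambda(M)) \;=\; H_\bullet(G, \Lambda(M)),
\]
which is the claim; naturality in $M$ follows because every step of the argument is functorial in $M$.

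The only delicate point is the construction of the flat resolution, and it rests squarely on two previously established facts: the left $KG$-freeness of $\Lambda(K_{par}G)$ (Lemma~\ref{l: KG o KparG left isormorphism}), which delivers exactness, and its right $K_{par}G$-flatness (Corollary~\ref{c: the globalizer module is KparG flat}), which delivers flatness of the resolution. Everything else is bookkeeping via the universal property and associativity of $\otimes_{G_{par}}$.
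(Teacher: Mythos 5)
Your proposal is correct and follows essentially the same route as the paper: both arguments tensor a projective (free) resolution $P_\bullet \to K$ of right $KG$-modules with $\Lambda(K_{par}G)$, use its left $KG$-freeness (Lemma~\ref{l: KG o KparG left isormorphism}) for exactness and its right $K_{par}G$-flatness (Corollary~\ref{c: the globalizer module is KparG flat}) to obtain a flat resolution of $\mathcal{B}$, and then apply associativity (Proposition~\ref{p: partial tensor product is associative}) to identify the resulting complex with $P_\bullet \otimes_{KG} \Lambda(M)$. Your write-up merely makes explicit a few steps the paper leaves implicit (e.g., that each term is a direct sum of copies of $\Lambda(K_{par}G)$), but the substance is identical.
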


\begin{proof}
    Let $P_\bullet \to K$ be a projective resolution of $K$ in $G$-\textbf{Mod}. Since $KG \otimes_{G_{par}} K_{par}G$ is free as left $KG$-module we obtain a resolution $P_\bullet \otimes_{KG} (KG \otimes_{G_{par}} K_{par}G) \to K \otimes_{KG}(KG \otimes_{G_{par}} K_{par}G)$. By Remark~\ref{r: K otimes KparG is B} and the fact that $KG \otimes_{G_{par}} K_{par}G$ is flat as right $K_{par}G$-module, we have that $P_\bullet \otimes_{KG}(KG\otimes_{G_{par}} K_{par}G) \to \mathcal{B}$ is a flat resolution of right $K_{par}G$-modules of $\mathcal{B}$. By \cite[Lemma 3.2.8]{weibel_1994} we can compute $H^{par}_{\bullet}(G, M) = \operatorname{Tor}_{\bullet}^{K_{par}G}(\mathcal{B}, M)$ using a flat resolution of $\mathcal{B}$. Then,
    \begin{align*}
        H_{\bullet}^{par}(G, M) 
        & \cong H_\bullet(P_\bullet \otimes_{KG} (KG \otimes_{G_{par}} K_{par}G) \otimes_{K_{par}G} M) \\
        (\text{by Proposition~\ref{p: partial tensor product is associative}})& \cong H_\bullet(P_\bullet \otimes_{KG} (KG \otimes_{G_{par}} M)) \\
        & \cong H_\bullet(G, KG \otimes_{G_{par}} M) \\
        & \cong H_\bullet(G, \Lambda(M)).
    \end{align*}
\end{proof}

A direct application of Theorem~\ref{t: partial group homology is global group homology} give us the following two corollaries:

\begin{corollary}[Lyndon-Hochschild-Serre spectral sequence]
    Let $\pi: G \to \operatorname{End}_K(M)$ be a partial representation, and let $N$ be a normal subgroup of $G$. Then, there exists a spectral sequence
    \[
        H_{p}(G/N, H_q(N, \Lambda(M))) \Rightarrow H_{p+q}^{par}(G, M).
    \]
\end{corollary}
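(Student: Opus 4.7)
The plan is to reduce this immediately to the classical Lyndon--Hochschild--Serre spectral sequence for group homology, and then transport through the isomorphism provided by Theorem~\ref{t: partial group homology is global group homology}.

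First, I would invoke Theorem~\ref{t: partial group homology is global group homology}, which yields a natural isomorphism $H_{\bullet}^{par}(G, M) \cong H_{\bullet}(G, \Lambda(M))$. Since $\Lambda(M) = KG \otimes_{G_{par}} M$ is an honest (global) left $G$-module, the classical Lyndon--Hochschild--Serre spectral sequence (see for instance \cite[Theorem 6.8.2]{weibel_1994}) applies directly to it: for any normal subgroup $N \trianglelefteq G$ and any $G$-module $X$ one has a convergent first-quadrant spectral sequence
\[
    E^{2}_{p,q} = H_{p}(G/N, H_{q}(N, X)) \Rightarrow H_{p+q}(G, X).
\]
Taking $X = \Lambda(M)$ and composing with the isomorphism of Theorem~\ref{t: partial group homology is global group homology} on the abutment produces the desired spectral sequence
\[
    H_{p}(G/N, H_{q}(N, \Lambda(M))) \Rightarrow H_{p+q}^{par}(G, M).
\]

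There is no real obstacle here: all the substance is in Theorem~\ref{t: partial group homology is global group homology}, whose proof leverages the flatness of $KG \otimes_{G_{par}} K_{par}G$ as a right $K_{par}G$-module together with the associativity of the partial tensor product. The only thing worth being careful about is that $H_q(N, \Lambda(M))$ on the $E^2$-page carries its natural $G/N$-module structure induced by the conjugation action of $G$ on $N$ and the restricted $N$-action on $\Lambda(M)$; this is automatic from the classical construction and requires no additional verification in the partial setting, because the functor $\Lambda$ produces a genuine $G$-module on which the whole classical machinery operates without modification.
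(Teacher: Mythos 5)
Your proposal is correct and follows exactly the paper's own argument: apply the classical Lyndon--Hochschild--Serre spectral sequence to the global $G$-module $\Lambda(M)$ and identify the abutment with $H_{\bullet}^{par}(G,M)$ via Theorem~\ref{t: partial group homology is global group homology}. The additional remark about the $G/N$-module structure on the $E^2$-page is accurate and harmless, though the paper does not spell it out.
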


\begin{proof}
    The spectral sequence is obtained using the Lyndon-Hochschild-Serre spectral sequence for $\Lambda(M)$ and applying Theorem~\ref{t: partial group homology is global group homology}.
\end{proof}

\begin{corollary}[Shapiro's lemma]
    Let $G$ be a group, let $S$ be a subgroup of $G$, and let $\pi: S \to \operatorname{End}_K(M)$ be a partial representation. Then,     
    \[
        H_{\bullet}^{par}(S, M) \cong H_\bullet(G, KG \otimes_{S_{par}} M).
    \]
\end{corollary}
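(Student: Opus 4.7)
The plan is to chain three isomorphisms together: first reduce the partial group homology of $S$ to the ordinary group homology of $\Lambda(M)$ via Theorem~\ref{t: partial group homology is global group homology}, then apply the classical Shapiro's lemma for group homology from $S$ to $G$, and finally rearrange the resulting iterated tensor product using the associativity of the partial tensor product (Proposition~\ref{p: partial tensor product is associative}).

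More concretely, I would first observe that $KG$ is a $KG$-$KS$-bimodule, hence a $KG$-$S_{par}$-bimodule via the embedding $S \hookrightarrow G$ and the fact that every global $S$-action is in particular a partial $S$-action (Example~\ref{e: G-modules are Gpar-modules}). Consequently, by Proposition~\ref{p: partial tensor product A-R bimodule}, $KG \otimes_{S_{par}} M$ carries a well-defined left $KG$-module structure, and the expression on the right-hand side of the statement makes sense. By Theorem~\ref{t: partial group homology is global group homology} applied to the subgroup $S$, one has
\[
    H_{\bullet}^{par}(S, M) \cong H_{\bullet}(S, \Lambda(M)) = H_{\bullet}(S, KS \otimes_{S_{par}} M).
\]

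Next I would apply the classical Shapiro's lemma for ordinary group homology to the $KS$-module $N := KS \otimes_{S_{par}} M$, obtaining
\[
    H_{\bullet}(S, KS \otimes_{S_{par}} M) \cong H_{\bullet}(G, KG \otimes_{KS}(KS \otimes_{S_{par}} M)).
\]
Finally, Proposition~\ref{p: partial tensor product is associative} (applied with $\mathcal{A} = KS$, the partial action being on $KG$ viewed as a right $S_{par}$-module via the restriction of its canonical global $G$-action) gives a natural isomorphism of $KG$-modules
\[
    KG \otimes_{KS}(KS \otimes_{S_{par}} M) \cong (KG \otimes_{KS} KS) \otimes_{S_{par}} M \cong KG \otimes_{S_{par}} M,
\]
and stringing the three isomorphisms together yields the claim.

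The only genuinely delicate point is the bookkeeping of module structures in the associativity step, since we are mixing a classical tensor product over $KS$ with a partial tensor product over $S_{par}$; in particular one must check that the left $KG$-action induced through $KG \otimes_{KS} KS \cong KG$ matches the $KG$-action on $KG \otimes_{S_{par}} M$ obtained directly from Proposition~\ref{p: partial tensor product A-R bimodule}. This is a routine verification on generators $g \otimes_{S_{par}} m$, so I do not expect any substantive obstacle beyond keeping the bimodule structures straight.
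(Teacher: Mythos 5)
Your proposal is correct and follows essentially the same route as the paper: Theorem~\ref{t: partial group homology is global group homology} applied to $S$, then the classical Shapiro's lemma, then Proposition~\ref{p: partial tensor product is associative} to identify $KG \otimes_{KS}(KS \otimes_{S_{par}} M)$ with $KG \otimes_{S_{par}} M$. The extra care you take with the bimodule bookkeeping is a reasonable addition but does not change the argument.
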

\begin{proof}
    Using the classical Shapiro's lemma and Theorem~\ref{t: partial group homology is global group homology} we obtain the following natural isomorphisms
    \begin{align*}
        H^{par}_{\bullet}(S, M) 
        &\cong H_{\bullet}(S, KS \otimes_{S_{par}} M) \\
        (\text{by Shapiro's lemma}) &\cong H_{\bullet}(G, KG \otimes_{KS}(KS \otimes_{S_{par}} M)) \\
        (\text{by Proposition~\ref{p: partial tensor product is associative}})&\cong H_{\bullet}(G, (KG \otimes_{KS}KS) \otimes_{S_{par}} M) \\
        &\cong H_{\bullet}(G, KG \otimes_{S_{par}} M).
    \end{align*}
\end{proof}

\begin{remark}
    Using Theorem~\ref{t: partial group homology is global group homology} and a classical argument of spectral sequences of bicomplexes or by utilizing the Acyclic Assembly Lemma \cite[Lemma 2.7.3]{weibel_1994} as in the proof of \cite[Theorem 2.7.2]{weibel_1994} one can prove that the inclusion map $\iota_\bullet: C^{par}_{\bullet}(G, M) \to C_{\bullet}(G, KG \otimes_{G_{par}}M)$ is a quasi-isomorphism.
    The proof of this fact is omitted since it is quite technical and does not add any new insight to the main results of this paper.
\end{remark}

\subsection{Partial group cohomology}
We now study the cohomological framework, extending the implications of Theorem~\ref{t: partial group homology is global group homology} by dualizing it into a cohomology spectral sequence that converges to the partial group cohomology.

Let $M$ be a right $K_{par}G$-module, by the tensor-hom adjunction and Remark~\ref{r: K otimes KparG is B} we obtain the following isomorphisms:
\begin{align*}
    \hom_{K_{par}G}(\mathcal{B}, M) 
    &\cong \hom_{K_{par}G}(K \otimes_{G_{par}} K_{par}G, M) \\
    &\cong \hom_{K_{par}G}\big(K \otimes_{KG} (KG \otimes_{G_{par}} K_{par}G), M \big) \\
    &\cong \hom_{KG}\big(K, \hom_{K_{par}G}(KG \otimes_{G_{par}} K_{par}G, M) \big).
\end{align*}
Recall that by Lemma~\ref{l: KG o KparG left isormorphism} we have that $KG \otimes_{G_{par}} K_{par}G$ is free as left $KG$-module. Thus, if $Q$ is an injective right $K_{par}G$-module by \cite[Lemma 3.5]{Lam1998LecturesOM} the right $KG$-module $\hom_{K_{par}G}(KG \otimes_{G_{par}}K_{par}G, Q)$ is injective. Therefore, we have that $\hom_{K_{par}G}( \mathcal{B}, - ): \textbf{Mod-}K_{par}G \to K\textbf{-Mod}$ is isomorphic to the composition of functors
\[
   \hom_{KG}(K, - ) \circ \hom_{K_{par}G}(KG \otimes_{G_{par}} K_{par}G, -),
\]
and that the functor $\hom_{K_{par}G},( KG \otimes_{G_{par}} K_{par}G, -)$ sends injective right $K_{par}G$-modules to injective right $KG$-modules. Hence, by \cite[Theorem 5.8.3]{weibel_1994} we obtain the following
\begin{proposition} \label{p: cohomological globalization spectral sequence}
    Let $M$ be a right $K_{par}G$-module. Then, there exist a cohomology spectral sequence
    \begin{equation}
        E^{p,q}_{2} = H^{p}(G, \operatorname{Ext}^{q}_{K_{par}G}(\Lambda(K_{par}G), M)) \Rightarrow H^{p+q}_{par}(G, M).
        \label{eq: cohomological spectral sequence}
    \end{equation}
\end{proposition}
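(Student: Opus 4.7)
The plan is to apply Grothendieck's spectral sequence \cite[Theorem 5.8.3]{weibel_1994} for the right derived functors of a composition; the two paragraphs immediately preceding the statement already set up all the ingredients, and what remains is to assemble the conclusion. Concretely, I would first realize $\hom_{K_{par}G}(\mathcal{B}, -)$ as a composition of two left exact functors: using $\mathcal{B} \cong K \otimes_{G_{par}} K_{par}G$ from Remark~\ref{r: K otimes KparG is B} together with the tensor--hom adjunction, one obtains the natural isomorphism
\[
    \hom_{K_{par}G}(\mathcal{B}, M) \;\cong\; \hom_{KG}\!\big(K,\, \hom_{K_{par}G}(\Lambda(K_{par}G), M)\big),
\]
naturally in the right $K_{par}G$-module $M$.

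Next, I would verify the acyclicity hypothesis of Grothendieck's spectral sequence for the composition $\hom_{KG}(K, -) \circ \hom_{K_{par}G}(\Lambda(K_{par}G), -)$. Both factors are left exact as $\hom$ functors, so the only nontrivial condition is that the inner functor sends injective right $K_{par}G$-modules to $\hom_{KG}(K,-)$-acyclic right $KG$-modules. The decisive input is Lemma~\ref{l: KG o KparG left isormorphism}, which asserts that $\Lambda(K_{par}G) = KG \otimes_{G_{par}} K_{par}G$ is free as a left $KG$-module; invoking \cite[Lemma 3.5]{Lam1998LecturesOM} one concludes that $\hom_{K_{par}G}(\Lambda(K_{par}G), -)$ in fact carries injective right $K_{par}G$-modules to injective right $KG$-modules, which are in particular acyclic for $\hom_{KG}(K,-)$.

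Finally, I would identify the terms of the resulting spectral sequence. The right derived functors of $\hom_{KG}(K, -)$ are by definition the classical group cohomology $H^p(G, -)$; the right derived functors of the inner functor are $\operatorname{Ext}^q_{K_{par}G}(\Lambda(K_{par}G), -)$; and the right derived functors of the full composition, which by the first step agrees with $\hom_{K_{par}G}(\mathcal{B}, -)$, are $\operatorname{Ext}^{p+q}_{K_{par}G}(\mathcal{B}, -) = H^{p+q}_{par}(G, -)$. Substituting these into Grothendieck's spectral sequence yields precisely the displayed convergence. There is no real conceptual obstacle remaining: the entire argument rests on Remark~\ref{r: K otimes KparG is B}, Lemma~\ref{l: KG o KparG left isormorphism}, and the standard Grothendieck machinery; the only step one needs to be careful about is the preservation of injectives by the inner functor, and this is precisely what the freeness of $\Lambda(K_{par}G)$ over $KG$ guarantees.
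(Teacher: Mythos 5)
Your proposal is correct and follows essentially the same route as the paper: the same decomposition of $\hom_{K_{par}G}(\mathcal{B}, -)$ as $\hom_{KG}(K,-) \circ \hom_{K_{par}G}(\Lambda(K_{par}G), -)$ via Remark~\ref{r: K otimes KparG is B} and the tensor--hom adjunction, the same use of Lemma~\ref{l: KG o KparG left isormorphism} together with \cite[Lemma 3.5]{Lam1998LecturesOM} to show the inner functor preserves injectives, and the same appeal to \cite[Theorem 5.8.3]{weibel_1994}. No discrepancies to report.
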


To understand when the spectral sequence \eqref{eq: cohomological spectral sequence} collapses, we first need to establish under what conditions $KG \otimes_{G_{par}} K_{par}G$ is projective as right $K_{par}G$-module. With this purpose in mind, we will identify $KG \otimes_{G_{par}} K_{par}G$ with $KG \otimes \mathcal{B}$ as in Remark~\ref{r: K otimes KparG is B}. 

\begin{lemma} \label{l: computation rules for nu}
    Let $G$ be a group, for all $g \in G$ we define $\nu_g := 1 - e_g$. Then,
    \begin{enumerate}[(i)]
        \item $\nu_g \nu_h = \nu_h \nu_g$ for all $g, h \in G$, 
        \item $[g]\nu_h = \nu_{gh}[g]$ for all $g, h \in G$,
        \item $\nu_g[g]= [g] \nu_{g^{-1}} = 0$ for all $g \in G$,
        \item $K_{par}G = e_g K_{par}G \oplus \nu_g K_{par}G = K_{par}G e_g \oplus K_{par}G \nu_g$ for all $g \in G$.
    \end{enumerate}
\end{lemma}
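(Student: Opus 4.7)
The plan is to prove each item by direct computation using the relations from Proposition~\ref{p: computations rules} together with the partial representation axioms of Definition~\ref{d: partial representation} applied to the universal partial representation $g \mapsto [g]$ (Proposition~\ref{p: KparG universal property}). All four items reduce to short algebraic manipulations, so there is no single ``hard part''; the only place one has to be careful is in verifying the identity $e_g [g] = [g]$, which is the kernel of the argument.

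For (i), I would expand $\nu_g \nu_h = (1 - e_g)(1 - e_h)$ and use $e_g e_h = e_h e_g$ from Proposition~\ref{p: computations rules}(ii). For (ii), I would expand $[g]\nu_h = [g] - [g]e_h$ and apply Proposition~\ref{p: computations rules}(i) to rewrite $[g]e_h = e_{gh}[g]$. For (iii), the key observation is that axiom (a) of Definition~\ref{d: partial representation} with $s = g$, $t = g^{-1}$ gives $[g][g^{-1}][g] = [1][g] = [g]$, so $e_g[g] = [g]$ and hence $\nu_g[g] = 0$; symmetrically, $[g]e_{g^{-1}} = [g][g^{-1}][g] = [g]$ yields $[g]\nu_{g^{-1}} = 0$.

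For (iv), I would use the splitting induced by the idempotent $e_g$. Any $z \in K_{par}G$ decomposes as $z = e_g z + (1 - e_g) z = e_g z + \nu_g z$, giving the sum decomposition. To see the sum is direct, suppose $e_g x = \nu_g y$; multiplying on the left by $e_g$ and using $e_g^2 = e_g$ together with $e_g \nu_g = e_g - e_g^2 = 0$ yields $e_g x = 0$, so $e_g K_{par}G \cap \nu_g K_{par}G = 0$. The decomposition on the right side of $K_{par}G$ is entirely analogous, using $z = z e_g + z \nu_g$ and the same annihilation $\nu_g e_g = 0$.
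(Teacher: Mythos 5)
Your proposal is correct and follows essentially the same route as the paper, which simply declares items (i)--(iii) to be direct consequences of Proposition~\ref{p: computations rules} and deduces (iv) from (iii) together with $1 = e_g + \nu_g$; your write-up merely fills in the same computations explicitly (including the key identity $e_g[g]=[g][g^{-1}][g]=[g]$ and the directness of the sum via $e_g\nu_g=0$).
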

\begin{proof}
    Items $(i)$, $(ii)$, and $(iii)$ are direct consequences of Proposition~\ref{p: computations rules}. Finally, $(iv)$ follows from $(iii)$ and the obvious fact $1 = e_g + \nu_g$.
\end{proof}

\begin{lemma} \label{l: decomposition of KG o B}
    Let $\mathcal{N}$ be the right $K_{par}G$-submodule of $KG \otimes \mathcal{B}$ generated by the set $\{ g \otimes \nu_{g^{-1}} : g \in G \}$. Then, $KG \otimes \mathcal{B} \cong K_{par}G \oplus \mathcal{N}$ as right $K_{par}G$-modules.
\end{lemma}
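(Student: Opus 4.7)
The plan is to construct a $K_{par}G$-linear retraction $\rho$ of a natural embedding $\phi: K_{par}G \hookrightarrow KG \otimes \mathcal{B}$ and then identify $\ker \rho$ with $\mathcal{N}$. Let $\phi$ be the composition of the canonical right $K_{par}G$-linear map $z \mapsto 1 \otimes_{G_{par}} z$ with the isomorphism $f$ of Lemma~\ref{l: KG o KparG left isormorphism}; explicitly, $\phi([h]u) = h \otimes e_{h^{-1}}u$ for $h \in G$ and $u \in \mathcal{B}$. Define $\rho: KG \otimes \mathcal{B} \to K_{par}G$ by $\rho(g \otimes w) := [g]w$, viewing $w \in \mathcal{B} \subseteq K_{par}G$; this is well defined by $K$-bilinearity.

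The main technical check is that $\rho$ is right $K_{par}G$-linear. The only non-trivial case is compatibility with the action of $[k]$: by Remark~\ref{r: K otimes KparG is B}, $\rho((g \otimes w)\cdot [k]) = [gk][k^{-1}]w[k] = [g]e_k w[k]$ (using the relation $[g][k][k^{-1}] = [gk][k^{-1}]$), whereas $\rho(g \otimes w)\cdot [k] = [g]w[k]$; these agree because the difference $[g]\nu_k w[k] = [g]w\nu_k[k]$ vanishes by commutativity of $\mathcal{B}$ and the identity $\nu_k[k] = 0$ of Lemma~\ref{l: computation rules for nu}(iii). A direct computation also gives $\rho(\phi([h]u)) = [h]e_{h^{-1}}u = [h]u$, so $\rho \circ \phi = \operatorname{id}_{K_{par}G}$, and hence the short exact sequence
\[
    0 \to \ker \rho \to KG \otimes \mathcal{B} \overset{\rho}{\to} K_{par}G \to 0
\]
splits via $\phi$, producing a right $K_{par}G$-module isomorphism $KG \otimes \mathcal{B} \cong K_{par}G \oplus \ker \rho$.

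It remains to show $\ker \rho = \mathcal{N}$. The inclusion $\mathcal{N} \subseteq \ker \rho$ is immediate, since $\rho(g \otimes \nu_{g^{-1}}) = [g]\nu_{g^{-1}} = 0$ by Lemma~\ref{l: computation rules for nu}(iii). For the reverse, take $z = \sum_i g_i \otimes w_i \in \ker \rho$. Writing $w_i = e_{g_i^{-1}}w_i + \nu_{g_i^{-1}}w_i$ using Lemma~\ref{l: computation rules for nu}(iv) yields
\[
    z = \sum_i g_i \otimes e_{g_i^{-1}}w_i \,+\, \sum_i g_i \otimes \nu_{g_i^{-1}}w_i.
\]
The first sum equals $\phi\big(\sum_i [g_i] w_i\big) = \phi(\rho(z)) = \phi(0) = 0$ by $K$-linearity of $\phi$, while each summand of the second belongs to $\mathcal{N}$ because $g_i \otimes \nu_{g_i^{-1}}w_i = (g_i \otimes \nu_{g_i^{-1}})\cdot w_i$ (using that the right action of $u \in \mathcal{B}$ on $KG \otimes \mathcal{B}$ is $(g \otimes v)\cdot u = g \otimes uv$). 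I expect the right $K_{par}G$-linearity of $\rho$ to be the main hurdle; once it is in place via the annihilation $\nu_k[k] = 0$, everything else reduces to routine applications of Lemma~\ref{l: computation rules for nu} and the previously established isomorphisms.
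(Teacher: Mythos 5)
Your proof is correct and follows essentially the same route as the paper: your $\phi$ and $\rho$ are exactly the paper's $\phi_0$ and $\psi_0$, and identifying $\ker\rho$ with $\mathcal{N}$ via the decomposition $w = e_{g^{-1}}w + \nu_{g^{-1}}w$ is just a repackaging of the paper's argument that $KG \otimes \mathcal{B} = \im \phi_0 + \mathcal{N}$ with $\im \phi_0 \cap \mathcal{N} = 0$. The only difference is that you explicitly verify the right $K_{par}G$-linearity of $\rho$ (via $\nu_k[k]=0$), which the paper asserts without proof.
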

\begin{proof}
    Consider the maps of right $K_{par}G$-modules $\psi_0 : KG \otimes \mathcal{B} \to K_{par}G$ such that $\psi_{0}(g \otimes u) = [g]u$ and $\phi_0: K_{par}G \to KG \otimes \mathcal{B}$ such that $\phi_0([g]u) = (1 \otimes 1) \triangleleft [g]u = g \otimes e_{g^{-1}}u$. Observe that
    \[
        \psi_0 \phi_0([g]u) = \psi_0(g \otimes e_{g^{-1}}u) = [g]e_{g^{-1}}u = [g]u \text{ for all } g \in G \text{ and } u \in E(\mathcal{S}(G)).
    \]
    Then, $\psi_0 \phi_0 = 1_{K_{par}G}$. Note that $\im \phi_0$ is generated as right $K_{par}G$-submodule by the set $\{ \phi_0([g]) = g \otimes e_{g^{-1}} : g \in G \}$. By Lemma~\ref{l: computation rules for nu} we know that $g \otimes u = g \otimes e_{g^{-1}}u + g \otimes \nu_{g^{-1}}u$ for all $g \in G$ and $u \in \mathcal{B}$, then  $KG \otimes \mathcal{B} = \im \phi_0 + \mathcal{N}$.  On the other hand, it is clear that $\mathcal{N} \subseteq \ker \psi_0$. Therefore, $\im \phi_0 \cap \mathcal{N} = 0$, thus $KG \otimes \mathcal{B} = \mathcal{N} \oplus \im \phi_0$. Furthermore, since $\phi_0$ is injective we have that $K_{par}G \overset{\phi_0}{\cong} \im \phi_0$ as right $K_{par}G$-modules.   
\end{proof}

\begin{lemma} \label{l: N is projective iff}
    Consider $KG \otimes K_{par}G$ as the free right $K_{par}G$-module with the usual right action inherited from $K_{par}G$. Define $\delta: KG \otimes K_{par}G \to \mathcal{N}$ as the morphism of right $K_{par}G$-modules such that $\delta(g \otimes z):= (g \otimes \nu_{g^{-1}}) \triangleleft [z]$.
    Then, the following statements are equivalent:
    \begin{enumerate}[(i)]
        \item $KG \otimes \mathcal{B}$ is projective as right $K_{par}G$-module,
        \item $\mathcal{N}$ is projective as right $K_{par}G$-module,
        \item there exists a morphism of right $K_{par}G$-modules $\phi: \mathcal{N} \to KG \otimes K_{par}G$ such that $\delta \phi = 1_\mathcal{N}$.
    \end{enumerate}
\end{lemma}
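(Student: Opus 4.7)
The plan is to deduce the three equivalences by combining the decomposition from Lemma~\ref{l: decomposition of KG o B} with the standard characterization of projective modules as retracts of free modules. The argument naturally splits into two parts: (i) $\Leftrightarrow$ (ii) is a direct consequence of the decomposition, while (ii) $\Leftrightarrow$ (iii) is the universal property of projective modules applied to the specific surjection $\delta$.

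For (i) $\Leftrightarrow$ (ii), I would invoke Lemma~\ref{l: decomposition of KG o B}, which provides an isomorphism $KG \otimes \mathcal{B} \cong K_{par}G \oplus \mathcal{N}$ of right $K_{par}G$-modules. Since $K_{par}G$ is free (hence projective) as a right module over itself, and a direct sum is projective if and only if each summand is projective, this immediately gives the equivalence of (i) and (ii).

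For (ii) $\Leftrightarrow$ (iii), I would first note that $KG \otimes K_{par}G$ is a free right $K_{par}G$-module with basis $\{ g \otimes 1 : g \in G \}$, so the map $\delta$ is well-defined simply by assigning basis elements to $g \otimes \nu_{g^{-1}} \in \mathcal{N}$ and extending $K_{par}G$-linearly. The surjectivity of $\delta$ is immediate, since $\delta(g \otimes 1) = g \otimes \nu_{g^{-1}}$ and these are precisely the generators of $\mathcal{N}$ as a right $K_{par}G$-module. Then one applies the standard fact: a module $\mathcal{N}$ is projective if and only if every surjection onto it from a projective (in particular free) module admits a section. The direction (iii) $\Rightarrow$ (ii) is then the observation that a splitting of $\delta$ realizes $\mathcal{N}$ as a direct summand of the free module $KG \otimes K_{par}G$; the direction (ii) $\Rightarrow$ (iii) uses the defining lifting property of a projective module to lift $1_\mathcal{N}$ along $\delta$.

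The main obstacle, if any, is simply verifying that $\delta$ is well-defined and lands in $\mathcal{N}$; both are routine given the free module structure of $KG \otimes K_{par}G$ and the fact that $\mathcal{N}$ is defined as a right $K_{par}G$-submodule. No delicate calculation with the elements $\nu_g$ is needed for the equivalences themselves, since the heavy lifting has already been performed in Lemma~\ref{l: decomposition of KG o B}.
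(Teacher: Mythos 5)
Your proposal is correct and follows essentially the same route as the paper: the equivalence (i) $\Leftrightarrow$ (ii) via the decomposition $KG \otimes \mathcal{B} \cong K_{par}G \oplus \mathcal{N}$ from Lemma~\ref{l: decomposition of KG o B}, and (ii) $\Leftrightarrow$ (iii) via the split surjection $\delta$ from the free module $KG \otimes K_{par}G$ together with the lifting/retract characterization of projectivity. Your added remark that $\delta$ is surjective because the $g \otimes \nu_{g^{-1}}$ generate $\mathcal{N}$ is a point the paper leaves implicit, and is a welcome clarification.
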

\begin{proof}
    By Lemma~\ref{l: decomposition of KG o B} we have that $KG \otimes \mathcal{B} \cong K_{par}G \oplus \mathcal{N}$, then $KG \otimes \mathcal{B}$ is projective if, and only if $\mathcal{N}$ is projective. 
    For the item $(iii)$, consider the following exact sequence of right $K_{par}G$-modules
    \[
        0 \to \ker \delta \to KG \otimes K_{par}G \overset{\delta}{\to} \mathcal{N} \to 0.
    \]
    Then, it is clear that $(ii)$ implies $(iii)$. On the other hand, the existence of the map $\phi$ implies that the exact sequence splits and thus $\mathcal{N}$ is a direct summand of the right free $K_{par}G$-module $KG \otimes K_{par}G$.
\end{proof}

\begin{lemma} \label{l: proper quotients of B}
    Let $G$ be a group, and let $S \subseteq G$ such that $1 \notin S$. Then, the ideal of $\mathcal{B}$ generated by $\{ e_{s} : s \in S \}$ is different from $\mathcal{B}$.
\end{lemma}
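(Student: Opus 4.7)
The plan is to exhibit a $K$-algebra homomorphism $\phi: \mathcal{B} \to K$ sending $1 \mapsto 1$ and every $e_g$ with $g \neq 1$ to $0$. Once such a $\phi$ exists, the ideal $I$ generated by $\{e_s : s \in S\}$ is contained in $\ker \phi$ (since $1 \notin S$ implies each $e_s$ is killed), while $\phi(1)=1\neq 0$, so $1 \notin I$ and therefore $I \neq \mathcal{B}$.

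To construct $\phi$, the key input is Proposition~\ref{p: basic decomposition of elements of SG}: every element of $\mathcal{S}(G)$ has a unique normal form $e_{g_1}\cdots e_{g_n}[h]$ with the $g_i$ distinct and different from $h$. Specializing to idempotents, which correspond to $[h]=1$, we obtain that $E(\mathcal{S}(G))$ is in bijection with the finite subsets of $G\setminus\{1\}$: the subset $\{g_1,\ldots,g_n\}$ corresponds to $e_{g_1}\cdots e_{g_n}$, and the empty subset corresponds to $1$. Consequently $\mathcal{B}$ is a free $K$-module with basis indexed by these finite subsets, and the multiplication on basis vectors is given by union of supports. I define $\phi$ as the unique $K$-linear extension of the rule $\phi(1)=1$ and $\phi(u)=0$ for every non-identity basis idempotent $u \in E(\mathcal{S}(G))$.

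It remains to check that $\phi$ is multiplicative, for which it suffices (by $K$-bilinearity) to test on products of basis idempotents $u,v$. If $u=v=1$, both sides equal $1$. If at least one of $u,v$ is non-identity, say $u = e_{g_1}\cdots e_{g_n}$ with $n \geq 1$, then $uv$ has normal form $e_{k_1}\cdots e_{k_p}$ whose index set is the union of the supports of $u$ and $v$; in particular $p \geq n \geq 1$, so $uv \neq 1$, giving $\phi(uv)=0=\phi(u)\phi(v)$. Thus $\phi$ is a well-defined $K$-algebra homomorphism, which completes the proof.

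There is no real obstacle here; the only thing to verify carefully is that the product of two non-identity idempotents in $E(\mathcal{S}(G))$ is still non-identity, and this is immediate from the uniqueness of the normal form in Proposition~\ref{p: basic decomposition of elements of SG}.
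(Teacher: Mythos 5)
Your proof is correct and rests on the same key fact as the paper's: by Proposition~\ref{p: basic decomposition of elements of SG}, $\mathcal{B}$ is free as a $K$-module on the idempotents $E(\mathcal{S}(G))$, and any product involving some $e_s$ with $s \neq 1$ is again a non-identity basis idempotent, so $1$ cannot lie in the ideal. The paper states this directly as a linear-independence claim (the ideal is spanned by non-identity basis idempotents, which are independent of $1$), whereas you package the identical observation as an augmentation character $\phi:\mathcal{B}\to K$; the two formulations are interchangeable.
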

\begin{proof}
    Note that the ideal generated by $\{ e_{s} : s \in G \}$ has the basis $\{ e_{s}e_{g_1} \ldots e_{g_m} : s \in S, m \in \mathbb{N}, \text{ and } g_i \in G \}$ as a $K$-module. Finally, observe that such a set is linearly independent to $1$.
\end{proof}

\begin{lemma} \label{l: zeros of KparG}
    Let $G$ be an infinite group and $z \in K_{par}G$. If there exists $S \subseteq G$ infinite such that $ze_{g} = 0$ for all $g \in S$ then $z = 0$.
\end{lemma}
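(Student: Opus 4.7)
The plan is to first reduce the general statement in $K_{par}G$ to the special case where $z \in \mathcal{B}$, using the canonical basic decomposition of Proposition~\ref{p: basic decomposition of elements of SG}, and then handle the reduced statement by a direct basis argument inside $\mathcal{B}$.

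By Proposition~\ref{p: basic decomposition of elements of SG} together with the identity $e_h[h] = [h]$, one has the direct-sum decomposition
\[
    K_{par}G = \bigoplus_{h \in G} \mathcal{B} e_h [h],
\]
and for each $h \in G$ the map $\mathcal{B} e_h \to \mathcal{B} e_h [h]$, $v \mapsto v[h]$, is a $K$-linear isomorphism. Writing $z = \sum_{h \in H} w_h [h]$ with $H \subseteq G$ finite and $w_h \in \mathcal{B} e_h$, Proposition~\ref{p: computations rules}(i) gives
\[
    z e_g = \sum_{h \in H} w_h [h] e_g = \sum_{h \in H} w_h e_{hg}[h]
\]
for every $g \in S$. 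The summands lie in the distinct direct summands $\mathcal{B} e_h [h]$, so the hypothesis $z e_g = 0$ forces $w_h e_{hg} = 0$ in $\mathcal{B}$ for each $h \in H$ and each $g \in S$. Since left-translation by $h$ is a bijection of $G$, the set $hS$ is infinite, and the problem reduces to showing: \emph{if $u \in \mathcal{B}$ satisfies $u e_t = 0$ for all $t$ in an infinite subset $T \subseteq G$, then $u = 0$}.

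For the reduced statement, I would use the standard $K$-basis of $\mathcal{B}$ consisting of idempotents $e_I := \prod_{g \in I} e_g$ indexed by finite subsets $I \subseteq G$ (with $e_\emptyset := 1$), which comes from Proposition~\ref{p: computations rules}(ii)--(iv). Write $u = \sum_{I \in \mathcal{I}} \lambda_I e_I$ as a finite combination with all $\lambda_I$ nonzero, and set $F := \bigcup_{I \in \mathcal{I}} I$, a finite subset of $G$. Since $T$ is infinite we may pick $t \in T \setminus F$; then
\[
    u e_t = \sum_{I \in \mathcal{I}} \lambda_I e_{I \cup \{t\}},
\]
and because $t \notin F$ the sets $I \cup \{t\}$ are pairwise distinct as $I$ varies over $\mathcal{I}$. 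Hence the $e_{I \cup \{t\}}$ are distinct basis elements of $\mathcal{B}$, so $u e_t = 0$ forces $\lambda_I = 0$ for all $I \in \mathcal{I}$, i.e., $u = 0$.

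The only real subtlety is the reduction step: one must carefully justify that the subspaces $\mathcal{B} e_h [h]$ are in direct-sum position inside $K_{par}G$ and that $v \mapsto v[h]$ is injective on $\mathcal{B} e_h$. Once this reduction is in place, the argument in $\mathcal{B}$ is purely combinatorial, exploiting the fact that the finite support $F$ of $u$ cannot exhaust the infinite set $T$ of ``test idempotents''.
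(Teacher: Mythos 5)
Your proof is correct and follows essentially the same route as the paper: reduce to the case $z \in \mathcal{B}$ via the direct-sum decomposition of $K_{par}G$ over $G$ (the paper uses $\bigoplus_h [h]\mathcal{B}$ where you use $\bigoplus_h \mathcal{B}e_h[h]$, a cosmetic difference), and then observe that multiplying a nonzero element of $\mathcal{B}$ by $e_t$ for $t$ outside its finite support sends distinct basis idempotents to distinct basis idempotents. The only nitpick is that the basis of $\mathcal{B}$ should be indexed by finite subsets of $G \setminus \{1\}$ (since $e_1 = 1$), but this does not affect the argument.
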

\begin{proof}
    Suppose that we have already proven the lemma for the case $z \in \mathcal{B}$. Let $x \in K_{par}G$, since $K_{par}G = \oplus_{h \in G} [h] \mathcal{B}$, there exists $\{ w_{h} \}_{h \in G} \subseteq \mathcal{B}$ such that $x = \sum_{h \in G} [h]w_{h}$. Then, $z e_s = 0$ if, and only if, $[h]w_{h}e_{s} = 0$ for all $h \in G$. Note that $[h]w_h e_s = 0$ if, and only if, $e_{h^{-1}}w_{h} e_s = 0$. Suppose that there exists $S$ infinite such that $ze_s = 0$ for all $s \in G$. Then, $e_{h^{-1}}w_h e_s = 0$ for all $s \in S$ and $h \in G$, thus by hypotheses $e_{h^{-1}}w_h = 0$ for all $h \in G$. Therefore, $z = 0$.

    Thus, it is enough to prove the lemma for the case where $z \in \mathcal{B}$. Recall that $\mathcal{B}$ as $K$-module has basis $E(\mathcal{S}(G))=\{ e_{g_0} e_{g_1} \ldots e_{g_n} : n \in \mathbb{N}, \, g_i \in G \}$. Moreover, set $\mathcal{P} := \{ U \subseteq G : 1 \in U, \, |U| < \infty \}$, then the function
    \begin{align*}
        \xi: \mathcal{P} &\to E(\mathcal{S}(G)) \\
        U & \to \prod_{g \in U}e_g
    \end{align*}
    is bijective.
    If $z = 0$, then the proof is trivial. Let $z \in \mathcal{B} \setminus \{ 0 \}$,then there exists $\{ a_i \}_{i = 0}^{n} \subseteq K \setminus \{ 0 \}$ and $\{ U_i \}_{i = 0}^{n} \subseteq \mathcal{P}$ such that
    \[
        z = \sum_{i = 0}^{n} a_i \xi(U_i),
    \]
    and the set $\{ \xi(U_i) : 0 \leq i  \leq n\}$ is linear independent.
    Furthermore, observe that the set $\{ \xi(U_i \cup \{ g \} ) : 0 \leq i \leq n\}$ is linear independent for all $g \in G \setminus \cup^{n}_{i = 0} U_i$. Therefore,
    \[
       z e_g = \sum_{i = 0}^{n} a_i \xi(U_i \cup \{ g \}) \neq 0 \text{ for all } g \in G \setminus \cup_{i=0}^{n} U_i.
    \]
    Thus, we conclude that if $z e_s = 0$, then $s \in \cup_{i=0}^{n} U_i$. Now suppose that there exists $S \subseteq G$ infinite such that $ze_s = 0$ for all $s \in S$. Then, $S \subseteq \cup_{i = 0}^{n} U_i$, but this is a contradiction since the $\cup_{i = 0}^{n} U_i$ is finite. Thus, if $z \neq 0$ it only can be annihilated by finitely many $e_s$'s.
\end{proof}

Consider the surjective map of right $K_{par}G$-modules 
\begin{equation}
    \delta: KG \otimes K_{par}G \to \mathcal{N}
    \label{eq: delta map}
\end{equation}
such that $\delta(g \otimes 1) := g \otimes \nu_{g^{-1}}$. Observe that $\delta(g \otimes \nu_{g}) = g \otimes \nu_{g}$. Indeed,
\begin{align*}
    \delta(g \otimes \nu_{g}) 
    &= \delta(g \otimes 1) \triangleleft \nu_{g} \\ 
    &=  (g \otimes \nu_{g}) \triangleleft 1  - (g \otimes \nu_{g}) \triangleleft e_{g} \\
    &=  g \otimes \nu_{g}  - g \otimes \nu_{g} e_{g} \\
    &=  g \otimes \nu_{g}.
\end{align*}

\begin{proposition} \label{p: uncountable group implies no projective}
    Let $G$ be an uncountable infinite group, then $KG \otimes_{G_{par}} K_{par}G$ is not projective as right $K_{par}G$-module.
\end{proposition}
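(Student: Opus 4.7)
The plan is to prove the proposition by contradiction, invoking Lemma~\ref{l: N is projective iff} to replace ``$KG \otimes_{G_{par}}K_{par}G$ is projective'' with the hypothetical existence of a right $K_{par}G$-module splitting $\phi:\mathcal{N}\to KG\otimes K_{par}G$ of $\delta$. I would then extract constraints on $\phi$ from the module identities satisfied by the generators $g\otimes \nu_{g^{-1}}$, and finally push these constraints against the uncountability of $G$ via Lemmas~\ref{l: zeros of KparG} and~\ref{l: proper quotients of B}.

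Concretely, since $KG\otimes K_{par}G=\bigoplus_{h\in G}h\otimes K_{par}G$ is a free right $K_{par}G$-module, for each $g\in G$ I can write
$\phi(g\otimes\nu_{g^{-1}})=\sum_{h\in F_g}h\otimes a^{(g)}_h$
for a finite set $F_g\subseteq G$ and nonzero coefficients $a^{(g)}_h\in K_{par}G$. The condition $\delta\phi(g\otimes\nu_{g^{-1}})=g\otimes\nu_{g^{-1}}\neq 0$ forces $F_g\neq\emptyset$ for every $g\neq 1$. Next I would extract the two central constraints. Applying $\phi$ to $(g\otimes\nu_{g^{-1}})\cdot e_{g^{-1}}=0$ yields $a^{(g)}_h e_{g^{-1}}=0$ for all $h\in F_g$. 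A direct computation with Proposition~\ref{p: computations rules} shows $(g\otimes\nu_{g^{-1}})\cdot[s]=(gs\otimes\nu_{(gs)^{-1}})\cdot e_{s^{-1}}$ inside $\mathcal{N}$, so applying $\phi$ and comparing components gives the cocycle-type identity
\[ a^{(g)}_h\,[s]\;=\;a^{(gs)}_h\,e_{s^{-1}}\qquad\text{for all }g,s,h\in G, \]
with the convention $a^{(g)}_h=0$ when $h\notin F_g$.

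Now I would invoke Lemma~\ref{l: zeros of KparG}. Fix $h\in G$ with $S_h:=\{g\in G:h\in F_g\}\neq\emptyset$ and choose $g_0\in S_h$, so $a^{(g_0)}_h\neq 0$. For any $s\in G$ with $g_0 s\notin S_h$, we have $a^{(g_0 s)}_h=0$, whence the cocycle identity gives $a^{(g_0)}_h[s]=0$; multiplying on the right by $[s^{-1}]$ produces $a^{(g_0)}_h e_s=0$. Lemma~\ref{l: zeros of KparG} then forces $\{s: a^{(g_0)}_h e_s=0\}$ to be finite, so $G\setminus g_0^{-1}S_h$ is finite. Thus $T_h:=G\setminus S_h$ is finite for every $h\in H:=\bigcup_g F_g$.

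The main obstacle—and the crux of the proof—is converting the rigid dichotomy ``every $F_g$ is finite yet every $T_h$ is finite'' into a contradiction when $G$ is uncountable. Pure cardinality counting of the set $\{(g,h):h\in H\setminus F_g\}=\{(g,h):g\in T_h\}$ only yields $|H|=|G|$, which by itself is consistent. To finish, I would combine this counting with Lemma~\ref{l: proper quotients of B}: using the cocycle relation at the uncountably many pairs where $S_h$ is cofinite, one assembles a nontrivial element of $\mathcal{B}$ which, on one hand, must equal $1$ because it arises from expanding the $g$-component of $\delta\phi(g\otimes\nu_{g^{-1}})=g\otimes\nu_{g^{-1}}$, but on the other hand lies in the ideal generated by $\{e_s: s\in S\}$ for some $S\subseteq G$ with $1\notin S$, which by Lemma~\ref{l: proper quotients of B} is a proper ideal of $\mathcal{B}$. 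This contradiction—$1$ inside a proper ideal—completes the argument. The delicate technical step, and where I expect the most work, is identifying the precise combination of the cocycle relations across an uncountable family of $g$'s that produces this forbidden equality in $\mathcal{B}$.
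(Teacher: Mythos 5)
Your setup, the component decomposition $\phi(g\otimes\nu_{g^{-1}})=\sum_{h\in F_g}h\otimes a^{(g)}_h$, the identity $a^{(g)}_h[s]=a^{(gs)}_h e_{s^{-1}}$, and the deduction via Lemma~\ref{l: zeros of KparG} that $T_h=G\setminus S_h$ is finite whenever $S_h\neq\emptyset$ all match the paper's Steps 0 and 2. But the proof is not complete, and the place where you stop is exactly where you misjudge the situation. First, you never show that $S_h$ is nonempty for more than one $h$: from $F_g\neq\emptyset$ you only get $H=\bigcup_g F_g\neq\emptyset$, and your whole finiteness conclusion is conditional on $h\in H$ with no control on the size of $H$. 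The paper's Step 1 fills this in: writing $a^{(g)}_g=\sum_h [h]u_{g,h}$ and expanding the $g$-component of $\delta\phi(g\otimes\nu_{g^{-1}})=g\otimes\nu_{g^{-1}}$, one finds $\nu_{g^{-1}}u_{g,1}=1-(\text{element of the ideal generated by }\{e_s:s\neq 1\})$, so Lemma~\ref{l: proper quotients of B} forces $a^{(g)}_g\neq 0$, i.e.\ $g\in F_g$ for every $g$ and hence $H=G$. This is the only point where Lemma~\ref{l: proper quotients of B} is needed; your plan defers it to a speculative endgame instead.

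Second, your assessment that the dichotomy ``every $F_g$ is finite yet every $T_h$ is finite'' is ``by itself consistent'' is wrong once $H$ is infinite, and this is precisely how the paper finishes. Choose any countably infinite $S=\{h_1,h_2,\dots\}\subseteq H$; then $\bigcup_n T_{h_n}$ is countable, so uncountability of $G$ gives some $t\notin\bigcup_n T_{h_n}$, whence $h_n\in F_t$ for all $n$ and $F_t$ is infinite --- a contradiction with Step 0. No ``precise combination of cocycle relations across an uncountable family'' producing a forbidden element of $\mathcal{B}$ is needed, and you do not actually construct one; you explicitly leave that step open. So the gap is twofold: the nonvanishing of the diagonal coefficients (needed to know $H$ is infinite) is missing, and the concluding contradiction, which you declare to be the crux and the hardest part, is in fact an elementary counting argument that you dismiss and replace with an unexecuted sketch.
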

\begin{proof}
    Let $KG \otimes K_{par}G$ be the right $K_{par}G$-module where $(h \otimes z) \triangleleft [g] := h \otimes z[g]$, note that with that structure we have that $KG \otimes K_{par}G$ is a free right $K_{par}G$-module with basis $\{ g \otimes 1 : g \in G \}$.
    Let $\delta$ be the map \eqref{eq: delta map} and let $\pi_g : KG \otimes K_{par}G \to K_{par}G$ be the maps such that 
    \[
        \pi_g(h \otimes z)=\left\{\begin{matrix}
            z & \text{ if } h = g, \\ 
            0 & \text{ otherwise} 
        \end{matrix}\right.
    \]
    for all $h \in G$ and $z \in K_{par}G$. Suppose that $\mathcal{N}$ is projective, then there exists a morphism of right $K_{par}G$-modules $\phi: \mathcal{N} \to KG \otimes K_{par}G$ such that $\delta \circ \phi = 1_{\mathcal{N}}$. Under the above hypotheses, we divide the rest of the proof into four steps:    
    \begin{enumerate}
        \item[\textbf{Step 0}] We note that the set $A_t := \{ s \in G : \pi_s(\phi(t \otimes \nu_{t^{-1}})) \neq 0 \}$ is finite for all $t \in G$. 
        \item[\textbf{Step 1}] We show that $x_g := \pi_g(\phi(g \otimes \nu_{g^{-1}})) \neq 0$ for all $g \in G$.
        \item[\textbf{Step 2}] We prove that the set $Z^{g} := \{ t \in G : \pi_g(\phi(t \otimes \nu_{t^{-1}})) = 0 \}$ is finite for all $g \in G$.
        \item[\textbf{Step 3}] We show that for all $S \subseteq G$ infinite and countable, there exists $t \in G$ such that $ S \subseteq A_{t}$, which leads to a contradiction.
    \end{enumerate}

    \textbf{Step 0}. Observe that $\phi(t \otimes \nu_{t^{-1}}) = \sum_{g \in G} g \otimes \pi_g(\phi(t \otimes \nu_{t^{-1}}))$. Therefore, $A_t$ is finite.

    \textbf{Step 1}. Let $g \in G$, recall that $K_{par}G = \bigoplus_{h \in G} [h] \mathcal{B}$, then there exists $\{ u_{s,h} \}_{s, h \in G} \subseteq \mathcal{B}$ such that and $\phi(g \otimes \nu_{g^{-1}}) = \sum_{s, h \in G} s \otimes [h] u_{s,h}$. Then,
    \begin{align*}
        g \otimes \nu_{g^{-1}} 
        &= \delta(\phi(g \otimes \nu_{g^{-1}})) 
        = \delta \left( \sum_{s, h \in G} s \otimes [h] u_{s, h} \right) \\
        &= \sum_{s, h \in G} (s \otimes \nu_{s^{-1}}) \triangleleft [h] u_{s, h}
        = \sum_{s, h \in G} sh \otimes \nu_{h^{-1}s^{-1}} e_{h^{-1}} u_{s, h}.
    \end{align*}
    Thus, $\nu_{g^{-1}} = \sum_{h \in G}\nu_{g^{-1}}e_{h^{-1}} u_{gh^{-1}, h}$. Therefore,
    \[
        \nu_{g^{-1}}u_{g, 1} = \nu_{g^{-1}} - \sum_{\substack{h \in G \\ h \neq 1}} \nu_{g^{-1}}u_{gh^{-1}, h}e_{h^{-1}} = 1 - e_{g^{-1}} - \sum_{\substack{h \in G \\ h \neq 1}} \nu_{g^{-1}}u_{gh^{-1}, h}e_{h^{-1}}.
    \]
    Observe that $e_{g^{-1}} + \sum_{\substack{s \in G \\ s \neq 1}} \nu_{g^{-1}}u_{gh^{-1}, h}e_{h^{-1}}$ is in the ideal generated by the set $\{ e_{g^{-1}} : g \in G \setminus \{ 1 \} \}$, hence by Lemma~\ref{l: proper quotients of B} we conclude that $\nu_{g^{-1}}u_{g, 1} \neq 0$, and therefore $u_{g, 1} \neq 0$. Note that
    \[
       x_g :=\pi_g \phi( g \otimes \nu_{g^{-1}}) = \sum_{h \in G} [h]u_{g,h},
    \]
    thus $\pi_g \phi( g \otimes \nu_{g^{-1}}) = 0$ if, and only if, $[h]u_{g,h}=0$ for all $h \in G$, but $[1]u_{g,1} = u_{g, 1} \neq 0$. Therefore, $x_g \neq 0$ for all $g \in G$. 

    \textbf{Step 2}. Since $\phi$ is a morphism of right $K_{par}G$-modules we get
    \begin{equation} \label{eq: xg xt equality}
        \phi(t \otimes \nu_{t^{-1}})[t^{-1}g] 
        = \phi((t \otimes \nu_{t^{-1}}) \triangleleft [t^{-1}g]) 
        = \phi(g \otimes \nu_{g^{-1}}e_{g^{-1}t}) 
        = \phi(g \otimes \nu_{g^{-1}}) e_{g^{-1}t},
    \end{equation}
    for all $g, t \in G$. Let $g \in G$, define $Z^{g}:= \{ t \in G: \pi_g(\phi(t \otimes \nu_{t^{-1}}))=0\}$. By equation \eqref{eq: xg xt equality} we have
    \[
        x_g e_{g^{-1}t}  = \pi_g (\phi(g \otimes \nu_{g^{-1}})) e_{g^{-1}t} = \pi_g(\phi(t \otimes \nu_{t^{-1}}))[t^{-1}g] = 0 \text{ for all } t \in Z^{g}.
    \]
    By \textit{Step 1} we know that $x_g \neq 0$, then, by Lemma~\ref{l: zeros of KparG}, we conclude that $Z^{g}$ is finite for all $g \in G$. 

    \textbf{Step 3}. Let $S$ be an infinite countable subset of $G$. Thus, by \textit{Step 2}, $T=G \setminus \cup_{s \in S} Z^s$ is an infinite uncountable set. Let $t \in T$, then $\pi_s(\phi(t \otimes \nu_{t^{-1}})) \neq 0$ for all $s \in S$, thus $S \subseteq A_t$, but by \textit{Step 0} the set $A_t$ is finite. Therefore, $KG \otimes \mathcal{B}$ cannot be projective as right $K_{par}G$-module.
\end{proof}

\begin{lemma} \label{l: existence of map on N}
    Let $G = \{ g_n \}_{n \in \mathbb{N}}$ be a countable group, where $g_0 = 1$. If there exists $\{ x_n \}_{n \geq 1} \subseteq KG \otimes K_{par}G$ such that:
    \begin{enumerate}[(i)]
        \item $\delta(x_n) = g_n \otimes \nu_{g_n^{-1}}$,
        \item for all $n \in \mathbb{N}$, $x_n[g_{n}^{-1}g_r] = x_r e_{g_r^{-1} g_n}$ for all $r \leq n$,
    \end{enumerate}
    Then, the map $\phi: \mathcal{N} \to KG \otimes K_{par}G$ given by $\phi(g_n \otimes \nu_{g_n^{-1}}u) := x_n \nu_{g_n^{-1}}u$ is a well-defined morphism of right $K_{par}G$-modules such that $\delta \phi = 1_{\mathcal{N}}$.
\end{lemma}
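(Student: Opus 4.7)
The plan is to identify $\mathcal{N}$ with a concrete $K$-module direct sum to make well-definedness transparent, and then carry out two direct verifications: right $K_{par}G$-linearity of $\phi$ (using (ii)) and $\delta\phi = 1_{\mathcal{N}}$ (using (i)).

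First I would show that, using the rule $[h]\nu_g = \nu_{hg}[h]$ from Lemma~\ref{l: computation rules for nu}(ii), the right $K_{par}G$-submodule $\mathcal{N}$ of $KG \otimes \mathcal{B}$ generated by $\{g \otimes \nu_{g^{-1}}\}_{g \in G}$ decomposes inside $KG \otimes \mathcal{B}$ as the direct sum
\[
    \mathcal{N} = \bigoplus_{n \in \mathbb{N}} g_n \otimes \nu_{g_n^{-1}} \mathcal{B}.
\]
Hence every element of $\mathcal{N}$ admits a unique expression $\sum_n g_n \otimes w_n$ with $w_n \in \nu_{g_n^{-1}}\mathcal{B}$, so the prescription $\phi(g_n \otimes \nu_{g_n^{-1}}u) := x_n \nu_{g_n^{-1}}u$ extends to a well-defined $K$-linear map $\phi \colon \mathcal{N} \to KG \otimes K_{par}G$; the $n = 0$ summand is trivial since $\nu_1 = 0$.

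Next, I would verify right $K_{par}G$-linearity on generators. For $\xi = g_n \otimes \nu_{g_n^{-1}}u$ and $h \in G$, write $g_r := g_n h$. The identity $[h^{-1}]\nu_{g_n^{-1}} = \nu_{g_r^{-1}}[h^{-1}]$ gives $\xi \cdot [h] = g_r \otimes \nu_{g_r^{-1}}(u \triangleleft [h])$, so
\[
    \phi(\xi \cdot [h]) = x_r \nu_{g_r^{-1}}[g_r^{-1}g_n]\,u\,[h] = x_r[g_r^{-1}g_n]\,\nu_{g_n^{-1}}\,u\,[h],
\]
whereas $\phi(\xi)\cdot [h] = x_n\,\nu_{g_n^{-1}}\,u\,[h]$. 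Multiplying condition (ii) on the right by $[g_r^{-1}g_n]$ and using $e_s[s] = [s]$ yields $x_r[g_r^{-1}g_n] = x_n e_{g_n^{-1}g_r}$ for every pair $n, r$. After substitution, the desired equality reduces to $x_n\,\nu_{g_n^{-1}g_r}\,\nu_{g_n^{-1}}\,u\,[h] = 0$, which holds by commutativity of $\mathcal{B}$ and the vanishing $\nu_{g_n^{-1}g_r}[h] = \nu_h[h] = 0$ from Lemma~\ref{l: computation rules for nu}(iii), since $h = g_n^{-1}g_r$.

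Finally, using right $K_{par}G$-linearity of $\delta$ together with (i) and $\nu_{g_n^{-1}}^2 = \nu_{g_n^{-1}}$, I compute
\[
    \delta\phi(g_n \otimes \nu_{g_n^{-1}}u) = \delta(x_n)\cdot \nu_{g_n^{-1}}u = (g_n \otimes \nu_{g_n^{-1}})\cdot \nu_{g_n^{-1}}u = g_n \otimes \nu_{g_n^{-1}}^2 u = g_n \otimes \nu_{g_n^{-1}}u.
\]
The main obstacle is the equivariance step: recognizing the correct manipulation of condition (ii) that produces the identity $x_r[g_r^{-1}g_n] = x_n e_{g_n^{-1}g_r}$ for every pair of indices, and then observing that the spurious idempotent $e_{g_n^{-1}g_r}$ gets absorbed precisely because $\nu_{g_n^{-1}g_r}[g_n^{-1}g_r] = 0$.
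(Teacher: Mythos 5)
Your proposal is correct and follows essentially the same route as the paper's proof: well-definedness via the $K$-module decomposition of $\mathcal{N}$ along the summands $g_n \otimes \nu_{g_n^{-1}}\mathcal{B}$, extension of hypothesis (ii) to all pairs $(n,r)$ by right-multiplying by $[g_r^{-1}g_n]$, an equivariance check on the generators $[h]$, and the direct computation of $\delta\phi$ using the idempotency of $\nu_{g_n^{-1}}$. The only cosmetic difference is that you absorb the extra idempotent via $\nu_{g_n^{-1}g_r}[g_n^{-1}g_r]=0$ where the paper uses the equivalent identity $e_{g_n^{-1}g_r}[g_n^{-1}g_r]=[g_n^{-1}g_r]$.
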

\begin{proof}
    It is clear that $\phi$ is a well-defined $K$-linear map since $
    \{ g_n \otimes \nu_{g_n^{-1}}u : n \geq 1, \, u \in E(\mathcal{S}(G)) \}$ is a basis of $\mathcal{N}$ as $K$-module. Note that in $(ii)$ if we multiply by $[g_r^{-1} g_n]$ to the right we obtain
    \[
        x_ne_{g_{n}^{-1}g_r} = x_n[g_{n}^{-1}g_r][g_r^{-1} g_n] = x_r e_{g_r^{-1} g_n}[g_r^{-1} g_n] = x_r [g_r^{-1} g_n],
    \]
    whence we conclude that $(ii)$ is valid not only for $r \leq n$ but for all $r,n \in \mathbb{N}$. Let $u \in \mathcal{B}$ and $t \in G$, observe that for any $n \geq 0$ there exists $r \geq 0$ such that $t = g_{n}^{-1} g_r$, then
    \begin{align*}
        \phi\big( (g_n \otimes \nu_{g_n^{-1}} u) \triangleleft [t]\big)
        &= \phi\big( (g_n \otimes \nu_{g_n^{-1}} u) \triangleleft [g_n^{-1}g_r]\big) \\
        &= \phi \big( g_r \otimes \nu_{g_r^{-1}} e_{g_r^{-1}g_n} (u \triangleleft [g_n^{-1}g_r]) \big) \\
        &= x_{r}e_{g_r^{-1}g_n}\nu_{g_r^{-1}}(u \triangleleft [g_n^{-1}g_r]) \\
        \text{by } (ii) \,  &= x_{n}[g_n^{-1}g_r]\nu_{g_r^{-1}}(u \triangleleft [g_n^{-1}g_r]) \\
        &= x_{n}\nu_{g_n^{-1}}[g_n^{-1}g_r](u \triangleleft [g_n^{-1}g_r]) \\
        &= x_{n}\nu_{g_n^{-1}}[g_n^{-1}g_r]([g_r^{-1}g_n]u [g_n^{-1}g_r]) \\
        &= x_{n}\nu_{g_n^{-1}}e_{g_n^{-1}g_r}u [g_n^{-1}g_r] \\
        &= x_{n}\nu_{g_n^{-1}}u [g_n^{-1}g_r] \\
        &= \phi(g_n \otimes \nu_{g_n^{-1}}u) [g_n^{-1}g_r] \\
        &= \phi(g_n \otimes \nu_{g_n^{-1}}u) [t].
    \end{align*}
    Then, $\phi$ is a well-defined map of right $K_{par}G$-modules. Finally, note that
    \[
        \delta\big( \phi(g_n \otimes \nu_{g_n^{-1}}) \big) = \delta( x_n \nu_{g_n^{-1}}) = \delta(x_n) \nu_{g_n^{-1}} = g_n \otimes \nu_{g_n^{-1}}.
    \]
    Hence, $\delta \circ \phi = 1_\mathcal{N}$.
\end{proof}

\begin{proposition} \label{p: countable group implies KG B is projective}
    Let $G$ be an infinite countable group, then $KG \otimes \mathcal{B}$ is projective as right $K_{par}G$-module.
\end{proposition}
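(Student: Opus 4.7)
The plan is to construct, by induction on $n$, a sequence $\{x_n\}_{n \geq 1} \subseteq KG \otimes K_{par}G$ satisfying conditions (i) and (ii) of Lemma~\ref{l: existence of map on N}; combining that with Lemma~\ref{l: N is projective iff} then yields projectivity of $KG \otimes \mathcal{B}$. Countability of $G$ is exactly what allows the enumeration $G = \{g_n\}_{n \in \mathbb{N}}$ with $g_0 = 1$ so that the induction runs through a single well-ordered sequence. For $n = 1$, set $x_1 := g_1 \otimes \nu_{g_1^{-1}}$; then $\delta(x_1) = g_1 \otimes \nu_{g_1^{-1}}^2 = g_1 \otimes \nu_{g_1^{-1}}$, and condition (ii) is vacuous.

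For the inductive step, suppose $x_1,\dots,x_{n-1}$ are already constructed. Abbreviate $p_r := e_{g_n^{-1}g_r}$ and $q_r := \nu_{g_n^{-1}g_r} = 1 - p_r$ for $1 \leq r < n$, and define the \emph{candidate values} $y_n^{(r)} := x_r[g_r^{-1}g_n]$; the rule $[g]e_h = e_{gh}[g]$ of Proposition~\ref{p: computations rules} immediately gives $y_n^{(r)} p_r = y_n^{(r)}$. I will build $x_n$ through a Gram--Schmidt-like iteration: put $x_n^{(0)} := g_n \otimes \nu_{g_n^{-1}}$, set $x_n^{(r)} := y_n^{(r)} + x_n^{(r-1)} q_r$ for $r = 1,\dots,n-1$, and take $x_n := x_n^{(n-1)}$. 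A direct calculation using $[g]e_h = e_{gh}[g]$ yields $[g_n^{-1}g_r]\nu_{g_r^{-1}}[g_r^{-1}g_n] = p_r \nu_{g_n^{-1}}$, whence $\delta(y_n^{(r)}) = g_n \otimes p_r \nu_{g_n^{-1}}$, and an induction on $r$ combined with $p_r \nu_{g_n^{-1}} + \nu_{g_n^{-1}} q_r = \nu_{g_n^{-1}}$ gives $\delta(x_n^{(r)}) = g_n \otimes \nu_{g_n^{-1}}$, establishing (i). An analogous induction shows $x_n^{(r)} p_s = y_n^{(s)}$ for all $s \leq r$, so in particular $x_n p_r = x_r[g_r^{-1}g_n]$; condition (ii) is then a one-line consequence of $q_r[g_n^{-1}g_r] = 0$ (Lemma~\ref{l: computation rules for nu}), since $x_n[g_n^{-1}g_r] = x_n p_r[g_n^{-1}g_r] = x_r[g_r^{-1}g_n][g_n^{-1}g_r] = x_r e_{g_r^{-1}g_n}$.

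The crux of the proof, and the main obstacle, is the compatibility $y_n^{(r)} p_s = y_n^{(s)} p_r$ for $r, s < n$, which is precisely what drives the inductive verification that $x_n^{(r)} p_s = y_n^{(s)}$. For $r \leq s$, I will rewrite both sides using $[g]e_h = e_{gh}[g]$ into the forms $x_r e_{g_r^{-1}g_s}[g_r^{-1}g_n]$ and $x_s e_{g_s^{-1}g_r}[g_s^{-1}g_n]$; the inductive hypothesis (ii) applied to $x_s$ with index $r$ replaces $x_r e_{g_r^{-1}g_s}$ by $x_s[g_s^{-1}g_r]$, and the partial-representation identity $[g_s^{-1}g_r][g_r^{-1}g_s][g_s^{-1}g_n] = [g_s^{-1}g_r][g_r^{-1}g_n]$ (a direct application of axiom (b) of Definition~\ref{d: partial representation}) brings both expressions to the common value $x_s[g_s^{-1}g_r][g_r^{-1}g_n]$. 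With the compatibility in hand, the remaining inductive verifications are mechanical, and Lemma~\ref{l: existence of map on N} then supplies the splitting $\phi : \mathcal{N} \to KG \otimes K_{par}G$ of $\delta$, so that Lemma~\ref{l: N is projective iff} concludes that $KG \otimes \mathcal{B}$ is projective as a right $K_{par}G$-module.
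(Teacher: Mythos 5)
Your proposal is correct and follows essentially the same route as the paper: your recursion $x_n^{(0)} = g_n \otimes \nu_{g_n^{-1}}$, $x_n^{(r)} = x_r[g_r^{-1}g_n] + x_n^{(r-1)}\nu_{g_n^{-1}g_r}$ coincides term by term with the paper's $x_{n,r}$, and both arguments verify conditions (i) and (ii) of Lemma~\ref{l: existence of map on N} by the same nested induction before invoking Lemma~\ref{l: N is projective iff}. Your repackaging of the inner induction via the compatibility $y_n^{(r)}p_s = y_n^{(s)}p_r$ and the identity $x_n p_r = y_n^{(r)}$ is only a notational streamlining of the paper's \textbf{Induction A1}, not a different method.
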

\begin{proof}
    Let $\{ g_n \}_{n \in \mathbb{N}}$ be an enumeration of $G$ such that $g_0 = 1$. Define $x_1 := g_1 \otimes \nu_{g_1^{-1}}$, and we define $\{ x_n \}_{n \geq 1} \subseteq KG \otimes K_{par}G$ recursively as follows: suppose that we have already defined $x_r$ for all $r < n$. Set
    \[
       x_{n,1} := g_n \otimes \nu_{g_n^{-1}} + x_1 [g_1^{-1}g_n] - g_n \otimes \nu_{g_n^{-1}}e_{g_n^{-1}g_1},
    \]
    and recursively define:
    \[
       x_{n, r} := x_r [g_r^{-1}g_n] + x_{n,r-1}\nu_{g_n^{-1}g_r} \text{for all } r \in  \{ 2, 3, \ldots n-1 \}.
    \]
    Set \( x_n := x_{n, n-1} \). We affirm that the set \( \{ x_n \}_{n \geq 1} \) satisfies the hypotheses of Lemma~\ref{l: existence of map on N}. To prove this affirmation, we need to employ nested induction arguments. For the sake of clarity, we will explicitly write the begining and end of each induction argument.
    
    $\blacktriangleleft$ \textbf{Induction A}. Note that for $n = 1$ we have that $x_1$ trivially satisfies conditions $(i)$ and $(ii)$ of Lemma~\ref{l: existence of map on N}, thus establishing the base of our induction argument. Let $n > 1$, suppose that we already proved that $x_r$ satisfies $(i)$ and $(ii)$ of Lemma~\ref{l: existence of map on N} for all $r < n$. Observe that
    \begin{align*}
        x_{n, 1} [g_n^{-1} g_1]  
        &= \big(g_n \otimes \nu_{g_n^{-1}} + x_1[g_1^{-1} g_n] - g_n \otimes \nu_{g_n^{-1}}e_{g_n^{-1}g_1}\big)[g_n^{-1} g_1] \\ 
        &= g_n \otimes \nu_{g_n^{-1}}[g_n^{-1} g_1] + g_1 \otimes \nu_{g_1^{-1}}[g_1^{-1} g_n][g_n^{-1} g_1] - g_n \otimes \nu_{g_n^{-1}}e_{g_n^{-1}g_1}[g_n^{-1} g_1] \\ 
        &= g_n \otimes \nu_{g_n^{-1}}[g_n^{-1} g_1] + g_1 \otimes \nu_{g_1^{-1}}e_{g_1^{-1}g_n} - g_n \otimes \nu_{g_n^{-1}}[g_n^{-1} g_1] \\ 
        &= g_1 \otimes \nu_{g_1^{-1}}e_{g_1^{-1}g_n} = x_1 e_{g_1^{-1}g_n}.
    \end{align*}
    For $1 < r < n$, observe that 
    \[
        x_{n, r} [g_n^{-1} g_r]  
        = x_r [g_r^{-1}g_n][g_n^{-1} g_r] + x_{n,r-1}\nu_{g_n^{-1}g_r}[g_n^{-1} g_r] 
        = x_r e_{g_r^{-1}g_n} + 0 
        = x_r e_{g_r^{-1}g_n}.
    \]
    Then,
    \begin{equation}\label{eq: G countable formula 1}
        x_{n,r} [g_n^{-1} g_r] = x_r e_{g_r^{-1} g_n} \text{ for all } r \in \{ 1, 2, \ldots n-1 \}.
    \end{equation}
    We want to prove that
    \begin{equation} \label{eq: G countable formula 2}
        x_{n, r+m} [g_n^{-1}g_r] = x_{r} e_{g_r^{-1} g_n},
    \end{equation}
    for all $r \in \{ 1, 2, \ldots, n-1 \}$ and $ m \in \{0, 1, 2, \dots, n - 1 - r \}$. 

    We will prove equation \eqref{eq: G countable formula 2} by induction over $m$. 

    $\blacktriangleleft \blacktriangleleft$ \textbf{Induction A1}. Fix $r \in \{ 1, 2, \ldots, n-1 \}$. Note that \eqref{eq: G countable formula 1} is the base of the induction. For the general case take $m \in \{ 1, 2, \dots, n - 1 - r \}$ and suppose that $x_{n, r+m-1}[g_n^{-1} g_r] = x_r e_{g_r^{-1} g_n}$. Then,
    \begin{align*}
        x_{n, r+m} [g_n^{-1}g_r] 
        &= x_{r+m}[g_{r+m}^{-1}g_n] [g_n^{-1}g_r]+ x_{n, r+m-1} \nu_{g_n^{-1} g_{r+m}}[g_n^{-1}g_r] \\
        &= x_{r+m}[g_{r+m}^{-1}g_r] e_{g_r^{-1} g_n} + x_{n, r+m-1} [g_n^{-1}g_r] \nu_{g_r^{-1} g_{r+m}} \\
        (\flat) &= x_{r}e_{g_r^{-1}g_{r+m}} e_{g_r^{-1} g_n} + x_{r} e_{g_r^{-1}g_n} \nu_{g_r^{-1} g_{r+m}} \\
        &= x_{r} e_{g_r^{-1}g_n}(e_{g_r^{-1}g_{r+m}}  + \nu_{g_r^{-1} g_{r+m}})\\
        &= x_{r} e_{g_r^{-1} g_n},
    \end{align*}
    where the equality $(\flat)$ holds since $x_{r+m}[g_{r+m}^{-1}g_r] = x_{r}e_{g_r^{-1}g_{r+m}}$ by the hypotheses of the \textbf{Induction A},  
    and $x_{n, r+m-1} [g_n^{-1}g_r] = x_{r} e_{g_r^{-1}g_n}$ by the induction hypotheses of the \textbf{Induction A1}. \textbf{End of Induction A1}~$\blacktriangleright \blacktriangleright$. Therefore, equation~\eqref{eq: G countable formula 2} holds.

    In particular, when $m = n - 1 - r$ in equation \eqref{eq: G countable formula 2} we obtain 
    \[
       x_n [g_n^{-1} g_r] = x_{n, n-1} [g_n^{-1} g_r] = x_r e_{g_r^{-1}g_n}
    \]
    for all $r \leq n$. Thus, the set $\{ x_r \}_{r \leq n}$ satisfies the condition $(ii)$ of Lemma~\ref{l: existence of map on N}. 

    To verify that $\{ x_r \}_{r \leq n}$ also satisfies the condition $(i)$ of Lemma~\ref{l: existence of map on N} we have to perform another induction argument.

     $\blacktriangleleft \blacktriangleleft$~\textbf{Induction A2}. Observer that
     \begin{align*}
       \delta(x_{n,1}) 
         &= \delta( g_n \otimes \nu_{g_n^{-1}} + x_1[g_1^{-1} g_n] - g_n \otimes \nu_{g_n^{-1}}e_{g_n^{-1}g_1}) \\
         &= \delta( g_n \otimes \nu_{g_n^{-1}} + (g_1 \otimes \nu_{g_1^{-1}})[g_1^{-1} g_n] - g_n \otimes \nu_{g_n^{-1}}e_{g_n^{-1}g_1}) \\
         &= g_n \otimes \nu_{g_n^{-1}} + (g_1 \otimes \nu_{g_1^{-1}}) \triangleleft [g_1^{-1} g_n] - g_n \otimes \nu_{g_n^{-1}}e_{g_n^{-1}g_1} \\
         &= g_n \otimes \nu_{g_n^{-1}} + g_n \otimes [g_n^{-1} g_1]\nu_{g_1^{-1}}[g_1^{-1} g_n] - g_n \otimes \nu_{g_n^{-1}}e_{g_n^{-1}g_1} \\
         &= g_n \otimes \nu_{g_n^{-1}} + g_n \otimes \nu_{g_n^{-1}}e_{g_n^{-1}g_1} - g_n \otimes \nu_{g_n^{-1}}e_{g_n^{-1}g_1} \\
         &= g_n \otimes \nu_{g_n^{-1}}.
     \end{align*}
     This give us the base of the induction. Let $k \in \{ 2, \ldots, n -1 \}$, suppose that we already have proven that $\delta(x_{n, k-1}) = g_n \otimes \nu_{g_n^{-1}}$, then
    \begin{align*}
        \delta(x_{n, k}) 
        &= \delta\big(x_{k}[g_{k}^{-1}g_n] + x_{n, k-1}\nu_{g_n^{-1}g_{k}} \big) \\
        (\flat) &= (g_{k} \otimes \nu_{g_{k}^{-1}}) \triangleleft [g_{k}^{-1}g_n] + g_n \otimes \nu_{g_n^{-1}}\nu_{g_n^{-1}g_{k}} \\
        &= g_{n} \otimes \nu_{g_{n}^{-1}} e_{g_n^{-1}g_{k}} + g_n \otimes \nu_{g_n^{-1}}\nu_{g_n^{-1}g_{k}} \\
        &= g_{n} \otimes \nu_{g_{n}^{-1}},
    \end{align*}
    where $(\flat)$ holds since $\delta(x_k) = g_k \otimes \nu_{g_k^{-1}}$ by the hypotheses of \textbf{Induction A} and $\delta(x_{n, k-1}) = g_n \otimes \nu_{g_n^{-1}}$ by the hypotheses of \textbf{Induction A2}.
    Therefore, $\delta(x_n)  = \delta(x_{n, n-1}) = g_n \otimes \nu_{g_n^{-1}}$. Hence, $\{ x_r \}_{r \leq n}$ satisfies $(i)$ of Lemma~\ref{l: existence of map on N}. \textbf{End of Induction A2}~$\blacktriangleright \blacktriangleright$. 

    From \textbf{Induction A1} and \textbf{Induction A2} we conclude that the set \( \{ x_r \}_{r \leq n} \) satisfies the hypotheses of Lemma~\ref{l: existence of map on N}.
    \textbf{End of Induction A}~$\blacktriangleright$.

    By \textbf{Induction A} we know that $\{ x_n \}_{n \geq 1}$, satisfies the hypotheses of Lemma~\ref{l: existence of map on N}. Therefore, there exists a map of right $K_{par}G$-modules $\phi: \mathcal{N} \to KG \otimes K_{par}G$ such that $\delta \phi = 1_{\mathcal{N}}$. Hence, by Lemma~\ref{l: N is projective iff} we conclude that $K_{par}G \otimes \mathcal{B}$ is projective.
\end{proof}

\begin{remark} \label{r: finite group implies KG B projective}
    If $G$ is a finite group, then the construction made in Proposition~\ref{p: countable group implies KG B is projective} also holds, implying that $KG \otimes \mathcal{B}$ is projective as a right $K_{par}G$-module. An alternative way to verify this is to consider the following exact sequence:
    \[
        0 \to \mathcal{K} \to KG \otimes K_{par}G \to KG \otimes_{G_{par}} K_{par}G \to 0,
    \]
    where $\mathcal{K}$ is the module defined in equation \eqref{eq: kernel globalization partial tensor product}. Note that $\mathcal{K}$ is finitely generated, as it is generated as a right $K_{par}G$-module by the finite set $\{ gh \otimes e_{h^{-1}} - g \otimes [h] : g, h \in G \}$. Thus, $KG \otimes_{G_{par}} K_{par}G$ is a finitely presented right $K_{par}G$-module. By Corollary~\ref{c: the globalizer module is KparG flat}, we know that $KG \otimes_{G_{par}} K_{par}G$ is flat as a right $K_{par}G$-module. Therefore, by \cite[Theorem 3.56]{RotmanAnInToHoAl}, we conclude that $KG \otimes_{G_{par}} K_{par}G$ is projective.
\end{remark}

\begin{proposition} \label{p: dual modules collapses}
    Let $M$ be a left $K_{par}G$-module, suppose that $K$ is self-injective. Then, $\operatorname{Ext}^{n}_{K_{par}G}(KG \otimes_{G_{par}} K_{par}G, M^{*})=0$ for all $n \geq 1$.
\end{proposition}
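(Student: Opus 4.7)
The plan is to reduce everything to a tensor-hom adjunction and then exploit the flatness of $\Lambda(K_{par}G)=KG\otimes_{G_{par}}K_{par}G$ over $K_{par}G$ that was established in Corollary~\ref{c: the globalizer module is KparG flat}, together with the self-injectivity of $K$. The basic observation is that for any right $K_{par}G$-module $N$ one has a natural isomorphism
\[
    \hom_{K_{par}G}(N, M^{*}) = \hom_{K_{par}G}(N, \hom_{K}(M, K)) \;\cong\; \hom_{K}(N \otimes_{K_{par}G} M, \, K),
\]
where $M^{*}$ carries its right $K_{par}G$-module structure $(f\cdot z)(m)=f(z\cdot m)$. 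So $\hom_{K_{par}G}(-, M^{*})$ factors as $\hom_{K}(-,K)\circ\bigl((-)\otimes_{K_{par}G}M\bigr)$.

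Next, I would pick a projective resolution $P_{\bullet}\to KG\otimes_{G_{par}}K_{par}G$ in $\textbf{Mod-}K_{par}G$ and apply $-\otimes_{K_{par}G}M$ to it. By Corollary~\ref{c: the globalizer module is KparG flat}, $KG\otimes_{G_{par}}K_{par}G$ is flat as a right $K_{par}G$-module, so $\operatorname{Tor}_{n}^{K_{par}G}(KG\otimes_{G_{par}}K_{par}G,\,M)=0$ for $n\geq 1$. Therefore the augmented complex
\[
    \cdots\to P_{1}\otimes_{K_{par}G}M\to P_{0}\otimes_{K_{par}G}M\to (KG\otimes_{G_{par}}K_{par}G)\otimes_{K_{par}G}M\to 0
\]
is exact. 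Using the associativity of the partial tensor product (Proposition~\ref{p: partial tensor product is associative}), the target identifies with $KG\otimes_{G_{par}}M$, so $P_{\bullet}\otimes_{K_{par}G}M$ is a resolution of $KG\otimes_{G_{par}}M$.

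Now I apply $\hom_{K}(-,K)$. Since $K$ is self-injective by hypothesis, this functor is exact, so the dualized complex $\hom_{K}(P_{\bullet}\otimes_{K_{par}G}M,\,K)$ has vanishing cohomology in positive degrees. By the adjunction in the first paragraph, this complex is naturally isomorphic to $\hom_{K_{par}G}(P_{\bullet}, M^{*})$, whose cohomology computes $\operatorname{Ext}^{\bullet}_{K_{par}G}(KG\otimes_{G_{par}}K_{par}G, M^{*})$. Hence $\operatorname{Ext}^{n}_{K_{par}G}(KG\otimes_{G_{par}}K_{par}G, M^{*})=0$ for all $n\geq 1$, as desired.

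There is no real obstacle: the only thing to be careful about is checking that the adjunction respects the bimodule structures correctly (so that the isomorphism $\hom_{K_{par}G}(N,M^{*})\cong\hom_{K}(N\otimes_{K_{par}G}M,K)$ is natural in $N$ as a functor $\textbf{Mod-}K_{par}G\to K\textbf{-Mod}$), and to invoke flatness and self-injectivity in the correct places. Both are routine once the set-up is fixed.
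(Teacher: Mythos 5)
Your proof is correct, but it resolves a different variable than the paper does, so the two arguments are mechanically distinct even though they rest on the same pillars (the tensor-hom adjunction, the exactness of the globalization, and the self-injectivity of $K$). The paper takes a projective resolution $P_\bullet \to M$ of the \emph{left} module $M$, invokes \cite[Lemma 3.5]{Lam1998LecturesOM} to conclude that $\hom_K(P_\bullet, K)$ is an \emph{injective} resolution of $M^*$, and then computes $\operatorname{Ext}$ from the second variable, using the adjunction and Proposition~\ref{p: partial tensor product is associative} to reduce to the exactness of $\hom_K(-,K)$ and of $KG \otimes_{G_{par}} -$ (Theorem~\ref{t: the globalization is exact}). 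You instead resolve the first variable $\Lambda(K_{par}G)$ by projective right modules, convert $\hom_{K_{par}G}(P_\bullet, M^*)$ into $\hom_K(P_\bullet \otimes_{K_{par}G} M, K)$ via the same adjunction, and kill the higher cohomology using $\operatorname{Tor}_{n}^{K_{par}G}(\Lambda(K_{par}G), M)=0$ from Corollary~\ref{c: the globalizer module is KparG flat} plus exactness of $\hom_K(-,K)$. What your route buys is that you never need the statement that $K$-duals of projective modules are injective; you only use that $\hom_K(-,K)$ is exact, which is a strictly weaker consequence of self-injectivity, and the flatness input is quoted directly as a corollary rather than re-derived through associativity inside the computation. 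The trade-off is negligible here since the paper cites Lam's lemma elsewhere anyway, but your version is marginally more self-contained. The one point you flag as needing care, the naturality and module-structure bookkeeping in the adjunction $\hom_{K_{par}G}(N, M^*) \cong \hom_K(N \otimes_{K_{par}G} M, K)$, is indeed routine and matches the right action $(f \triangleleft z)(m) = f(z \cdot m)$ used in the paper.
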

\begin{proof}
    Recall that $\hom_K(M, K)$ is a right $K_{par}G$-module with the action determined by $(f \triangleleft [g])(m):= f([g] \cdot m)$. Let $P_\bullet \to M$ be a projective resolution of left $K_{par}G$-modules. Then, by \cite[Lemma 3.5]{Lam1998LecturesOM}, we conclude that $\hom_K(M, K) \to \hom_K(P_\bullet, K)$ is an injective resolution of right $K_{par}G$-modules. Then, for $n \geq 1$, we have
    \begin{align*}
        \operatorname{Ext}^{n}_{K_{par}G}( KG \otimes_{G_{par}}K_{par}G, M^{*})
        &\cong H^{n}\big(\hom_{K_{par}G}\big( KG \otimes_{G_{par}} K_{par}G, \hom_K(P_\bullet, K) \big)  \big)\\
        (\flat)&\cong H^{n}\big(\hom_{K}\big( KG \otimes_{G_{par}} K_{par}G \otimes_{K_{par}G} P_\bullet, K \big) \big) \\
        (\flat \flat) &\cong H^{n}\big(\hom_{K}( KG \otimes_{G_{par}} P_\bullet, K) \big)=0. 
    \end{align*}
    The equality $(\flat)$ holds because of the well-known tensor-hom adjunction, the equality $( \flat \flat)$ holds by Proposition~\ref{p: partial tensor product is associative}. Finally, the last equality holds since the functors $\hom_K(-, K)$ and $KG \otimes_{G_{par}} -$ are exact.
\end{proof}

Combining Proposition~\ref{p: cohomological globalization spectral sequence}, Proposition~\ref{p: countable group implies KG B is projective}, Remark~\ref{r: finite group implies KG B projective}, and Proposition~\ref{p: dual modules collapses}, we obtain the following theorem:

\begin{theorem} \label{t: cohomology main theorem}
    Let $M$ be a right $K_{par}G$-module. Then, there exists a cohomology spectral sequence
    \[
        E_{2}^{p,q} := H^p\big(G, \operatorname{Ext}^{q}_{K_{par}G}( \Lambda(K_{par}G) , M) \big) \Rightarrow H^{p+q}_{par}(G, M).
    \]
    If $G$ is finite or countable, or if $K$ is self-injective and $M$ is the dual module of a left $K_{par}G$-module, then the spectral sequence collapses and gives rise to an isomorphism
    \[
        H^{n}_{par}(G, M) \cong H^{n}(G, \hom_{K_{par}G}( \Lambda(K_{par}G), M)).
    \]
\end{theorem}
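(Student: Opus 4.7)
The plan is to assemble the theorem directly from the four results cited just before its statement; there is essentially no new work to do beyond a clean bookkeeping argument about the collapse of the spectral sequence built in Proposition~\ref{p: cohomological globalization spectral sequence}. The first half of the statement is exactly the conclusion of that proposition, so I would invoke it verbatim to produce the first-quadrant cohomology spectral sequence \(E_{2}^{p,q} = H^{p}(G, \operatorname{Ext}^{q}_{K_{par}G}(\Lambda(K_{par}G), M)) \Rightarrow H^{p+q}_{par}(G, M)\).

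For the second half, I would argue that in each of the three hypotheses listed, the \(E_2\)-page is concentrated on the row \(q = 0\). Concretely, whenever \(\operatorname{Ext}^{q}_{K_{par}G}(\Lambda(K_{par}G), M) = 0\) for every \(q \geq 1\), the spectral sequence degenerates at \(E_2\), all differentials out of and into the bottom row vanish, and the edge map gives an isomorphism
\[
    H^{n}_{par}(G, M) \;\cong\; E_{2}^{n,0} \;=\; H^{n}\bigl(G, \hom_{K_{par}G}(\Lambda(K_{par}G), M)\bigr).
\]
So it remains only to verify this vanishing in each of the three cases.

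For \(G\) finite, Remark~\ref{r: finite group implies KG B projective} shows that \(\Lambda(K_{par}G) \cong KG \otimes \mathcal{B}\) is projective as a right \(K_{par}G\)-module (via the isomorphism of Lemma~\ref{l: KG o KparG left isormorphism} and Remark~\ref{r: K otimes KparG is B}); for \(G\) countably infinite, Proposition~\ref{p: countable group implies KG B is projective} provides the same projectivity. In either case projectivity of \(\Lambda(K_{par}G)\) immediately kills \(\operatorname{Ext}^{q}_{K_{par}G}(\Lambda(K_{par}G), -)\) for all \(q \geq 1\). For the last case, where \(K\) is self-injective and \(M = N^{*}\) for some left \(K_{par}G\)-module \(N\), Proposition~\ref{p: dual modules collapses} supplies the needed vanishing directly.

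Since every step is an invocation of a prior result, the only conceivable obstacle would be a mismatch between \(\Lambda(K_{par}G)\) and the module \(KG \otimes \mathcal{B}\) treated in the intermediate lemmas, but this identification is precisely the content of Lemma~\ref{l: KG o KparG left isormorphism} together with Remark~\ref{r: K otimes KparG is B}, so I would insert a single sentence pointing to those results before collecting the three cases into one statement to conclude the proof.
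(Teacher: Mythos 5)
Your proposal is correct and follows exactly the paper's own route: the paper states the theorem as an immediate combination of Proposition~\ref{p: cohomological globalization spectral sequence}, Proposition~\ref{p: countable group implies KG B is projective}, Remark~\ref{r: finite group implies KG B projective}, and Proposition~\ref{p: dual modules collapses}, which is precisely the assembly you carry out. Your added remarks on the edge-map isomorphism when the $E_2$-page is concentrated in the row $q=0$, and on the identification of $\Lambda(K_{par}G)$ with $KG \otimes \mathcal{B}$, merely make explicit what the paper leaves implicit.
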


\begin{remark}
    Note that by Proposition~\ref{p: uncountable group implies no projective} the spectral sequence in Theorem~\ref{t: cohomology main theorem} may not collapse when $G$ is an infinite uncountable group.
\end{remark}

\begin{proposition} \label{p: Omega of a KG module is trivial}
    Let $M$ be a right $KG$-module. Then,
    \[
        \operatorname{Ext}^{n}_{K_{par}G}(\Lambda(K_{par}G), M) =  \left\{\begin{matrix}
           M, & \text{ if }\,\, n = 0\\
           0, & \text{ otherwise.}
        \end{matrix}\right.
    \]
\end{proposition}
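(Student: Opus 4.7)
The plan is to carry out a change-of-rings reduction from $K_{par}G$ to $KG$, with the flatness of $\Lambda(K_{par}G)$ from Corollary~\ref{c: the globalizer module is KparG flat} doing all the real work. First I would note that the inclusion $G \hookrightarrow KG$ is (trivially) a partial representation, so Proposition~\ref{p: KparG universal property} supplies a canonical $K$-algebra homomorphism $\pi : K_{par}G \to KG$ with $\pi([g]) = g$ and hence $\pi(e_g) = 1$. Thus any right $KG$-module $M$ becomes a right $K_{par}G$-module by restriction along $\pi$, with every idempotent $e_g$ acting as the identity; this is how the statement is to be interpreted.

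The first key step is to establish the natural isomorphism of right $KG$-modules
\[
\Lambda(K_{par}G) \otimes_{K_{par}G} KG \;\cong\; KG,
\]
which follows by combining the associativity of the partial tensor product (Proposition~\ref{p: partial tensor product is associative}) with Remark~\ref{r: partial tensor prouct of G-modules} applied to the global module $KG$:
\[
(KG \otimes_{G_{par}} K_{par}G) \otimes_{K_{par}G} KG \;\cong\; KG \otimes_{G_{par}} KG \;\cong\; KG \otimes_{KG} KG \;\cong\; KG.
\]

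Next I would pick a projective resolution $P_\bullet \to \Lambda(K_{par}G)$ in $\textbf{Mod-}K_{par}G$ and push it through the functor $- \otimes_{K_{par}G} KG$. By Corollary~\ref{c: the globalizer module is KparG flat}, $\Lambda(K_{par}G)$ is flat as a right $K_{par}G$-module, so $\operatorname{Tor}^{K_{par}G}_q(\Lambda(K_{par}G), KG) = 0$ for every $q \geq 1$; together with the previous isomorphism, this shows that $P_\bullet \otimes_{K_{par}G} KG \to KG$ is an exact complex. Because each $P_i$ is a direct summand of a free right $K_{par}G$-module, each $P_i \otimes_{K_{par}G} KG$ is a direct summand of a free right $KG$-module, and in particular projective. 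Hence $P_\bullet \otimes_{K_{par}G} KG \to KG$ is a projective resolution of $KG$ in $\textbf{Mod-}KG$.

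Finally, applying the tensor-hom adjunction $\hom_{K_{par}G}(P_i, M) \cong \hom_{KG}(P_i \otimes_{K_{par}G} KG, M)$ attached to $\pi$ and taking cohomology yields
\[
\operatorname{Ext}^{n}_{K_{par}G}(\Lambda(K_{par}G), M) \;\cong\; H^{n}\bigl(\hom_{KG}(P_\bullet \otimes_{K_{par}G} KG, M)\bigr) \;\cong\; \operatorname{Ext}^{n}_{KG}(KG, M),
\]
which equals $M$ for $n = 0$ and vanishes for $n \geq 1$ since $KG$ is free over itself. I do not anticipate a genuine obstacle here: the flatness of $\Lambda(K_{par}G)$, the associativity of $\otimes_{G_{par}}$, the trivialization of $\otimes_{G_{par}}$ on global modules, and the adjunction for $\pi$ are all already available, so the argument is a routine change-of-rings computation.
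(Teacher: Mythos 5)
Your argument is correct and follows essentially the same route as the paper: base change along the algebra map $K_{par}G \to KG$, pushing a projective resolution of $\Lambda(K_{par}G)$ forward to a projective resolution of $KG$ via Proposition~\ref{p: partial tensor product is associative} and Remark~\ref{r: partial tensor prouct of G-modules}, and then the tensor-hom adjunction to identify the two $\operatorname{Ext}$ groups. The only (harmless) divergence is in how exactness of $P_\bullet \otimes_{K_{par}G} KG$ is justified: the paper invokes flatness of $KG$ as a left $K_{par}G$-module (an external citation), whereas you deduce the vanishing of the higher $\operatorname{Tor}$ from the flatness of $\Lambda(K_{par}G)$ itself (Corollary~\ref{c: the globalizer module is KparG flat}), which keeps the argument self-contained; both justifications are valid.
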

\begin{proof}
    Let $P_{\bullet} \to KG \otimes_{G_{par}} K_{par}G$ be a projective resolution of $KG \otimes_{G_{par}} K_{par}G$ of right $K_{par}G$-modules.
    Then, $P_{\bullet} \otimes_{K_{par}G}KG$ is a projective resolution of right $KG$-modules of $KG$. Indeed, by \cite[Theorem 6.6]{MMAMDDHKPartialHomology} we have that $KG$ is flat as a $K_{par}G$-module.
    Therefore, by Item $(i)$ of \cite[Lemma 10.69]{RotmanAnInToHoAl} $P_{\bullet} \otimes_{K_{par}G}KG$ is a projective resolution of right $KG$-modules of $(KG \otimes_{G_{par}} K_{par}G) \otimes_{K_{par}G} KG$.
    Moreover, by Proposition~\ref{p: partial tensor product is associative} we know that $(KG \otimes_{G_{par}} K_{par}G) \otimes_{K_{par}G} KG \cong KG$.
    Now observe that
    \begin{align*}
        \operatorname{Ext}^{n}_{K_{par}G}(KG \otimes_{G_{par}}K_{par}G, M)
        &\cong H^{n}(\hom_{K_{par}G}(P_{\bullet}, M)) \\
        &\cong H^{n}(\hom_{K_{par}G}(P_{\bullet}, \hom_{KG}(KG,M))) \\
        &\cong H^{n}(\hom_{KG}(P_{\bullet} \otimes_{K_{par}G} KG, M)) \\
        &\cong \operatorname{Ext}^{n}_{KG}(KG, M)
    \end{align*}
    whence we obtain the desired conclusion.
\end{proof}

Note that if $M$ is a right $KG$-module, Proposition~\ref{p: Omega of a KG module is trivial} implies that Theorem~\ref{t: cohomology main theorem} gives us the trivial isomorphism $H^{n}(G, M) \cong H^{n}(G,M)$. 

We recall from \cite{MMAMDDHKPartialHomology} that the \textbf{partial cohomological dimension} $cd^{par}_{K}(G)$ of $G$ (over $K$) is defined as the projective dimension of $\mathcal{B}$ as a $K_{par}G$-module. Therefore,
\[
    cd^{par}_{K}(G):= \operatorname{max}\left\{ n \in \mathbb{N} : H^{n}_{par}(G, M) \neq 0 \text{ for some right } K_{par}G\text{-module } M\right\}.
\]
By \cite[Corollary 6.7]{MMAMDDHKPartialHomology} we know that $cd_{K}(G) \leq cd_{K}^{par}(G)$. A direct consequence of Theorem~\ref{t: cohomology main theorem} is the following:
\begin{corollary} \label{c: partial cohomological dimension of G}
    If $G$ is finite or countable, then $cd^{par}_{K}(G) = cd_{K}(G)$. For the case where $G$ is infinite uncountable we obtain the following inequality $cd^{par}_{K}(G) \leq cd_{K}(G) + pd_{K_{par}G}(KG \otimes_{G_{par}} K_{par}G)$.
\end{corollary}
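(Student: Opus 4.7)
The plan is to handle the two cases separately. When $G$ is finite or countable, I would exploit the collapse established in Theorem~\ref{t: cohomology main theorem}, which yields the natural isomorphism
\[
    H^{n}_{par}(G, M) \cong H^{n}(G, \hom_{K_{par}G}(\Lambda(K_{par}G), M))
\]
for every right $K_{par}G$-module $M$. If $n > cd_{K}(G)$, the right-hand side vanishes because it is classical group cohomology of $G$ in degree exceeding the cohomological dimension, applied to the $KG$-module $\hom_{K_{par}G}(\Lambda(K_{par}G), M)$. Since $M$ is arbitrary, this forces $cd^{par}_{K}(G) \leq cd_{K}(G)$, and combined with the reverse inequality $cd_{K}(G) \leq cd^{par}_{K}(G)$ already cited from \cite[Corollary 6.7]{MMAMDDHKPartialHomology} one obtains the claimed equality.

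For the uncountable case, the spectral sequence of Proposition~\ref{p: cohomological globalization spectral sequence},
\[
    E_{2}^{p,q} = H^{p}(G, \operatorname{Ext}^{q}_{K_{par}G}(\Lambda(K_{par}G), M)) \Rightarrow H^{p+q}_{par}(G, M),
\]
takes over. Set $d := cd_{K}(G)$ and $\rho := pd_{K_{par}G}(KG \otimes_{G_{par}} K_{par}G)$; the bound is trivial if either is infinite, so assume both are finite. For $n > d + \rho$ and any decomposition $n = p + q$ with $p, q \geq 0$, at least one of $p > d$ or $q > \rho$ must hold. In the first case $H^{p}(G, -)$ vanishes on every $KG$-module; in the second $\operatorname{Ext}^{q}_{K_{par}G}(\Lambda(K_{par}G), -)$ vanishes on every right $K_{par}G$-module. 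Either way $E_{2}^{p,q} = 0$ for every $(p,q)$ on the antidiagonal $p + q = n$, so the filtration on the abutment collapses to zero, giving $H^{n}_{par}(G, M) = 0$ for all $M$, and therefore $cd^{par}_{K}(G) \leq d + \rho$, as required.

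No serious obstacle is anticipated here: both parts are immediate from the structural results already established (the collapse isomorphism for countable $G$ and the convergent spectral sequence for arbitrary $G$), combined only with the standard dimension-theoretic vanishing bounds. The sole subtlety is the harmless observation that if $d$ or $\rho$ is infinite the asserted inequality becomes trivial, so the entire argument may be carried out under the finiteness hypothesis without loss of generality.
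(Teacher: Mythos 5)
Your argument is correct and is exactly the intended one: the paper states this corollary as a direct consequence of Theorem~\ref{t: cohomology main theorem} (together with the cited inequality $cd_{K}(G) \leq cd_{K}^{par}(G)$) without writing out the details, and your proof supplies precisely those details — the collapse isomorphism forces $cd^{par}_{K}(G) \leq cd_{K}(G)$ in the finite/countable case, and the vanishing of $E_{2}^{p,q}$ on the antidiagonal $p+q=n$ for $n > cd_{K}(G) + pd_{K_{par}G}(\Lambda(K_{par}G))$ gives the general bound. No gaps.
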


Observe that Corollary~\ref{c: partial cohomological dimension of G} provides a positive answer to \textbf{Conjecture D} in \cite{MMAMDDHKPartialHomology} when $G$ is countable.

\section*{Acknowledgments}
The research was supported by the Fundação de Amparo à Pesquisa do Estado de São Paulo (FAPESP) under grant number 2022/12963-7.

\bibliographystyle{abbrv}
\bibliography{azu.bib}

\end{document}